\documentclass[11pt]{amsart}
\usepackage{geometry}                
\geometry{letterpaper}                   
\usepackage{graphicx}
\usepackage{amssymb}
\usepackage{epstopdf}
\DeclareGraphicsRule{.tif}{png}{.png}{`convert #1 `dirname #1`/`basename #1 .tif`.png}

\newtheorem{theorem}{Theorem}[section]
\newtheorem{lemma}[theorem]{Lemma}
\newtheorem{corollary}[theorem]{Corollary}

\theoremstyle{definition}

\theoremstyle{remark}
\newtheorem{remark}[theorem]{Remark}

\numberwithin{equation}{section}

\newcommand{\p}{\partial}
\newcommand{\wt}{\widetilde}

\newcommand{\CM}{\mathcal{M}}
\newcommand{\CH}{\mathcal{H}}

\newcommand{\CF}{\mathcal{F}}

\newcommand{\BFR}{\mathbf{R}}
\newcommand{\BFN}{\mathbf{N}}
\newcommand{\BFC}{\mathbf{C}}
\newcommand{\BFG}{\mathbf{G}}
\newcommand{\CN}{\mathcal{N}}
\newcommand{\CC}{\mathcal{C}}
\newcommand{\BFE}{\mathbf{E}}
\newcommand{\CD}{\mathcal{D}}

\newcommand{\BFT}{\mathbf{T}}
\newcommand{\ep}{\epsilon}
\newcommand{\BBN}{\mathbb{N}}
\newcommand{\BBH}{\mathbb{H}}
\newcommand{\vr}{\vec{r}}
\newcommand{\BFQ}{\mathbf{Q}}
\newcommand{\BFL}{\mathbf{L}}
\newcommand{\BFJ}{\mathbf{J}}

\newcommand{\be}{\begin{equation} }
\newcommand{\ee}{\end{equation}}
\newcommand{\bse}{\begin{subequations}}
\newcommand{\ese}{\end{subequations}}

\begin{document}

\title[Concentrated steady vorticities]{Concentrated steady vorticities of the Euler equation on 2-d domains and their linear stability}                  

 \author{Yiming Long${}^\#$}
 \address{Chern Institute of Mathematics and LPMC, Nankai University}
 \email{longym@nankai.edu.cn}
\thanks{${}^\#$ Partially supported by NSFC Grants (Nos. 11131004, 11671215 and 11790271), LPMC of Ministry of Education of China, Nankai University, and the Beijing Advanced Innovation Center for Imaging Technology at Capital Normal University.}

\author{Yuchen Wang}     
\address{Chern Institute of Mathematics, Nankai University}
\email{wangyuchen@mail.nankai.edu.cn}    

 \author{Chongchun Zeng${}^\dagger$}
 \address{School of Mathematics, Georgia Institute of Technology and Chern Institute of Mathematics, Nankai University}
\email{chongchun.zeng@math.gatech.edu}
\thanks{${}^\dagger$ Partially supported by NSF-DMS 1362507}

\begin{abstract}
We consider concentrated vorticities for the Euler equation on a smooth domain $\Omega \subset \BFR^2$ in the form of 
\[
\omega = \sum_{j=1}^N \omega_j \chi_{\Omega_j}, \quad |\Omega_j| = \pi r_j^2, \quad \int_{\Omega_j} \omega_j d\mu =\mu_j \ne 0,
\] 
supported on well-separated vortical domains $\Omega_j$, $j=1, \ldots, N$, of small diameters $O(r_j)$. A conformal mapping framework is set up to study this free boundary problem with $\Omega_j$ being part of unknowns. For any given vorticities $\mu_1, \ldots, \mu_N$ and small $r_1, \ldots, r_N\in \BFR^+$, through a perturbation approach, we obtain such piecewise constant steady vortex patches as well as piecewise smooth Lipschitz steady vorticities, both concentrated near non-degenerate critical configurations of the Kirchhoff-Routh Hamiltonian function. When vortex patch evolution is considered as the boundary dynamics of $\p \Omega_j$, through an invariant subspace decomposition, it is also proved that the spectral/linear stability of such steady vortex patches is largely determined by that of the $2N$-dimensional linearized point vortex dynamics, while the motion is highly oscillatory in the $2N$-codim directions corresponding to the vortical domain shapes. 
\end{abstract}

\maketitle

\section{Introduction} \label{S:Intro}

Consider the incompressible Euler equation with the slip boundary condition in a bounded domain $\Omega \subset \BFR^2$ 
\bse \label{E:Euler} 
\be \label{E:Euler-E}
\p_t u + (u \cdot \nabla) u = - \nabla p  
\ee
\be \label{E:Euler-BC}
\nabla \cdot u = 0, \quad x \in \Omega  \quad \text{ and } \quad u \cdot \mathbf{N} = 0, \quad x \in \p \Omega,
\ee
\ese
where $u= (u^1, u^2)^T$ is the velocity field, $p$ is the pressure, $\mathbf{N}$ is the outward unit normal to $\p \Omega$. 
Throughout this paper, we suppose that the boundary  
\[
\p \Omega = \cup_{j=0}^n C_j, \; n\ge 0, \text{ with outward unit normal vector } \BFN|_{C_j}= \BFN_j
\] 
where each connected component $C_j$ is a sufficiently smooth simply closed curve. In particular, $C_0$ denotes the exterior boundary. 

We shall mainly work with the vorticity formulation 
\be \label{E:Euler-V}
\p_t \omega + (u \cdot \nabla) \omega =0
\ee
of the Euler equation with the vorticity given by 
\[
\omega = \nabla \times u = \p_{x_1} u^2 - \p_{x_2} u^1. 
\]
To completely determine $u$ from $\omega$, one needs to specify the circulations 
\be \label{E:circulation}
\CC_j = \oint_{C_j} u \cdot d \vec{S}, \qquad j=0, \ldots, n,
\ee
on each $C_j$, which must satisfy the condition 
\be \label{E:circ-Stokes}
\sum_{j=0}^n \CC_j = \int_{\Omega} \omega d\mu  
\ee
due to the Stokes Theorem. It is standard that all the circulations $\CC_j$ are conserved physical quantities of solutions to the Euler equation. Therefore we fix the circulations  
\be \label{E:circ-P}
\vec{\CC} = (\CC_1, \ldots, \CC_n)^T
\ee
as free parameters of \eqref{E:Euler-V}. It is well-known that $u$ is uniquely determined by $\vec{\CC}$ and $\omega$, so \eqref{E:Euler-V} is a closed PDE system which is globally well-posed for $\omega \in H^s(\Omega)$, $s > \frac 12$. Moreover, the vortex patch dynamics of \eqref{E:Euler-V} is also well-posed, namely, solutions with piecewise constant initial $\omega$ remain piecewise constant \cite{Ch93, BC93, Se94}. See Section \ref{S:Pre} for more information.  

In this paper, we shall focus on steady solutions of \eqref{E:Euler-V} in the form of 
\be \label{E:localized-V}
\omega = \sum_{j=1}^N \omega_j, \quad supp(\omega_j) = \Omega_j \subset\subset \Omega \text{ diffeomorphic to disk}, \quad \Omega_j \cap \Omega_k =\emptyset, \; \forall j\ne k. 
\ee
In particular, each vortical component $\omega_j$ has a prescribed total vorticity on the small vortical domain $\Omega_j$, namely, 
\be \label{E:localization} 
\int_{\Omega_j} \omega_j d\mu = \mu_j \ne 0, \quad diam(\Omega_j) \leq 3r_j, \quad  \int_{\Omega_j} d\mu = \pi r_j^2, \quad 0<r_j<<1. 
\ee
Here $\mu_j\ne 0$ and small $r_j>0$ are prescribed parameters as they are physical quantities conserved by the volume preserving fluid flow of the Euler equation. We shall consider the existence of both continuous steady $\omega$ and steady vortex patches, the properties of their $\Omega_j$, and the spectral analysis of the linearized vortex patch dynamics at those steady ones. 

For such concentrated vorticities, among their main characterizations are the locations and the shapes of the vortical domains $\Omega_j$. Microscopically, the vorticities $\omega_j$ are highly concentrated -- with large average due to $\mu_j \ne 0$ and away from each others and boundary. Therefore, after some appropriate rescaling, each $\omega_j$ for a steady $\omega$ should approximately be a steady vorticity on $\BFR^2$. An obvious choice of such steady vorticity in $\BFR^2$ is a radially symmetric one on disks, some of which are stable as proved in \cite{WP85}. In our construction of steady concentrated vorticities in both continuous and piecewise constant cases, each vortical domain $\Omega_j$ would be a disk of radius $r_j$ with $O(|\vec{r}| r_j^2)$ perturbations where 
\[
\vec{r} =(r_1, \ldots, r_N)^T \in (\BFR^+)^N. 
\]
Macroscopically,  as $|\vr| \to 0+$, each vortex component $\omega_j$ converges to a delta mass $\mu_j \delta_{x_j^*} (x)$ at a location $x_j^*$. Therefore $\{x_1^*, \ldots, x_n^*\}$ is expected to be a steady configuration of the point vortex system, which is a $2N$-dim Hamiltonian ODE system 
\be \label{E:PVD}
\p_t X = \Lambda_N^{-1} J_N \nabla H_{\vec{\CC}} (X), \quad X \in \Omega^N, 
\ee
on $\Omega^N$ with the Kirchhoff-Routh Hamiltonian 
$H_{\vec {\CC}}$ given in \eqref{E:E0} and symplectic operator $\Lambda^{-1} J_N$ given in \eqref{E:symplectic-1}.  
When $\vec{\CC}=0$, such system was first introduced by Helmholtz \cite{He1858, Lin43} and proved rigorously in \cite{MP83, Tu87, MP93}. See also \cite{Se98, CW18b, DDMW18} and, for a related problem of the limit motion of concentrated vorticities in background vorticity distributions, see \cite{MP91, LM09, BJ11, CW18a}. As the first step to understand the dynamics of the concentrated vorticities, through a perturbation approach, we shall study  steady concentrated vorticities, both Lipschitz and piecewise constant on $\Omega$, located near a non-degenerate critical configuration of $H_{\vec{\CC}}$ with each vortical domain being an $O(|\vec{r}|r_j^2)$ perturbation to the disk $B_{r_j} (x_j^*)$ of radius $r_j$. 
 As discussed in Subsection \ref{SS:Ham-VP}, the vortex patches evolve as vortex patches \cite{MB02}.  Treating it as an evolution problem of the vortical boundary curves $\p \Omega_j$,  we also study the spectral stability of these steady vortex patches. The framework based on conformal mappings set up in this paper will also be the one in our forthcoming work on the nonlinear local dynamics near steady vortex patches. 

The first theorem is on the existence and local uniqueness of steady concentrated vortex patches which is proved via an Implicit Function Theorem argument. 

\begin{theorem} \label{T:VP}
Given circulations $\vec{\CC} \in \BFR^n$, $s>\frac 32$, integer $N>0$, vorticities $(\mu_1, \ldots, \mu_N) \in (\BFR\backslash \{0\})^N$, and a non-degenerate critical point $X_*= (x_1^*, \ldots, x_N^*) \in \Omega^N$ of $H_{\vec{\CC}}$ (defined in \eqref{E:E0}), there exists $\ep_0>0$ such that for any $\vec{r}=(r_1, \ldots, r_N)\in (\BFR^+)^N$ with $|\vec{r}| < \ep_0$, there exists a steady piecewise constant vortex patch $\omega$ satisfying \eqref{E:localized-V} and \eqref{E:localization}. Moreover for each vortical domain $\Omega_j$, $\p (r_j^{-1} \Omega_j)$ is an $O(|\vec{r}|r_j)$ perturbation to the circle $S^1$ in the $H^s (S^1)$ topology and the steady $\omega$ is locally unique in this class and independent of $s$. 
\end{theorem}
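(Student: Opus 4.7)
My plan is to cast the steady piecewise constant vortex patch problem as a nonlinear equation $F(\eta, z, \vec{r}) = 0$ on a Banach space and solve it via the Implicit Function Theorem at $(\eta, z, \vec{r}) = (0, X_*, 0)$. Using the conformal mapping framework the abstract promises, I would parametrize each vortical domain $\Omega_j$ of area $\pi r_j^2$ by its center $z_j \in \Omega$ together with a perturbation function $\eta_j \in H^s(S^1)$ describing the rescaled shape $r_j^{-1}(\Omega_j - z_j)$ as a small deformation of the unit disk, with normalizations of area and of the first conformal coefficient absorbing a few low Fourier modes of $\eta_j$. A piecewise constant $\omega = \sum_j \mu_j (\pi r_j^2)^{-1} \chi_{\Omega_j}$ is a steady solution of \eqref{E:Euler-V} iff the total stream function $\psi$ (with $-\Delta \psi = \omega$ and boundary data compatible with $\vec{\CC}$) is constant on each $\p \Omega_j$. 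Pulling back to $S^1$ and projecting onto mean-zero modes of $H^s(S^1)$ produces $N$ shape equations; enforcing that $z_j$ is the centroid of $\Omega_j$ produces $N$ location equations; together these assemble into $F$.

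Next I would split $\psi = \sum_k \psi_k^{\mathrm{self}} + \psi^{\mathrm{ext}}$, where $\psi_k^{\mathrm{self}}$ is the free-space Newtonian potential of $\omega_k \chi_{\Omega_k}$ and $\psi^{\mathrm{ext}}$ collects the harmonic correction enforcing $u \cdot \BFN = 0$ on $\p \Omega$ with the prescribed circulations $\vec{\CC}$, together with the cross-interactions $\psi_{k'}^{\mathrm{self}}|_{\Omega_j}$ for $k' \ne j$. On each $\Omega_j$, $\psi^{\mathrm{ext}}$ is harmonic and, in rescaled coordinates, extends smoothly down to $\vec{r} = 0$; its leading-order trace on $\p \Omega_j$ is controlled by the regular part of the Green's function evaluated at the centers, which is precisely the ingredient out of which the Kirchhoff-Routh Hamiltonian $H_{\vec{\CC}}$ is built. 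The self-interaction $\psi_j^{\mathrm{self}}$, in the rescaled variable, is $O(1)$, radially symmetric whenever $\eta_j \equiv 0$, and its linearization at $\eta_j = 0$ acts on $S^1$ as a Fourier multiplier that is strictly nonzero on modes $|k| \ge 2$, hence an isomorphism on the relevant subspace of $H^s(S^1)$.

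The Fr\'echet derivative of $F$ at $(0, X_*, 0)$ is then block triangular: the $\eta$-block is the invertible self-interaction multiplier, while after factoring out the natural $|\vec{r}|$-scale the $z$-block reduces to $D^2 H_{\vec{\CC}}(X_*)$ on $(\BFR^2)^N$, invertible by the non-degeneracy hypothesis. The Implicit Function Theorem then delivers a locally unique smooth solution $(\eta(\vec{r}), z(\vec{r}))$ for every $|\vec{r}| < \ep_0$, with the claimed $O(|\vec{r}|r_j)$ estimate on $\p(r_j^{-1} \Omega_j)$ in $H^s(S^1)$; $s$-independence of the solution follows from an elliptic bootstrap, since $\psi = \mathrm{const}$ on smooth $\p \Omega_j$ with $\psi$ harmonic off $\overline{\bigcup_j \Omega_j}$ forces $C^\infty$ regularity of the boundaries once $s_0 > 3/2$ is achieved. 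The main obstacle, and the technical core of the argument, is verifying that $F$ extends smoothly to $\vec{r} = 0$ in the rescaled conformal variables despite the genuinely singular point vortex limit, and then isolating the precise $|\vec{r}|$-scaling of the cross-interactions so that $D^2 H_{\vec{\CC}}(X_*)$ emerges cleanly as the dominant location block of the linearization.
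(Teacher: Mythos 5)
Your overall strategy is the paper's: parametrize each $\Omega_j$ by a normalized conformal map of $B_1$, reduce steadiness to $\Psi|_{\p \Omega_j}=const$ (Lemma \ref{L:steady-VP}), split the pulled-back condition into a self-interaction Fourier multiplier acting on the shape modes plus a Kirchhoff--Routh block acting on the locations, and close with the Implicit Function Theorem at $(X_*,0,0)$. However, two steps do not survive as written. First, the location equations cannot be centroid constraints. After the normalization $\p_z^2\Gamma_j(0)=0$ the shape unknowns live in the Fourier modes $m\ge 3$, and the self-interaction multiplier $\CD h(0)$ carries them onto the modes $m\ge 2$ only; the $\cos\theta,\sin\theta$ components of $\p_\theta\big(\Psi\circ\Gamma_j\big)$ are therefore unreachable by the shape variables and must themselves be the equations matched against $x_j$ (the paper's splitting $\big(\dot H^{s-1}\big)^N=Y_1\oplus(Y^{s-1})^N$). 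Appending $2N$ centroid equations to the full steadiness condition makes the system over-determined, and the linearization of a centroid constraint in $x_j$ is $-I$, not $\Lambda^{-1}J_N D^2H_{\vec{\CC}}(X_*)$; your own identification of the $z$-block with $D^2H_{\vec{\CC}}(X_*)$ shows you in fact need the first-mode projection of the steadiness condition, so the centroid equations should simply be dropped.

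Second, and more substantively, the claimed $O(|\vec{r}|r_j)$ bound on $\p(r_j^{-1}\Omega_j)$ does not follow from the Implicit Function Theorem in the variables you chose. If $\eta_j$ parametrizes $r_j^{-1}(\Omega_j-z_j)$ directly, smoothness of the solution branch gives only $\eta_j(\vec{r})=O(|\vec{r}|)$, one factor of $r_j$ short of the statement; the paper explicitly notes that a direct application of the theorem loses this order. The extra factor is obtained by pre-scaling the shape unknowns: the true boundary perturbation is written as $M_{\vr}\beta$ with $M_{\vr}=diag(r_1,\ldots,r_N)$ and one solves $\CF_j=r_j^{-1}\phi_j(X,M_{\vr}\beta,\vec{r})=0$ for $\beta$. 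This works because the leading self-interaction term is linear to first order under this scaling, so $\CD_\beta\CF$ remains a uniform isomorphism from $(X^s)^N$ to $(Y^{s-1})^N$, while the remaining forcing $r_jR_j$ carries an explicit factor $r_j$ cancelling the $r_j^{-1}$ and vanishes at $(X_*,0,0)$ by criticality of $X_*$. Without this rescaling your argument yields existence, local uniqueness, and $O(|\vec{r}|)$ closeness, but not the stated rate. (Your $s$-independence via elliptic bootstrap is a workable alternative to the paper's argument, which instead runs the construction for each $s>\frac32$ and invokes local uniqueness in the weakest topology.)
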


\begin{remark}
In particular, it is not assumed that the sizes of the small $r_j$ of each vortical domain are of the same order. See Remark \ref{R:shape} for details on the shape of $\Omega_j$, which is like an ellipse in the leading order. 
\end{remark}

The next theorem proved in Section \ref{S:stability} states that spectral properties of the linearized vortex patch dynamics at such steady concentrated vortex patches found in Theorem \ref{T:VP} are largely determined by those of the point vortex dynamics \eqref{E:PVD} linearized at $X_*$. The proof is based on a perturbation argument where the Hamiltonian structure of the problem is used. More concretely, we represent the vortical domains by {\it conformal mappings} defined on the unit disk $B_1$. Let $\mathcal{X}^s$, $s\ge 0$, denote the space of  $H^s(S^1)$ variations  of the boundaries of vortical domains modulo certain symmetry (essentially due to the Mobius group on $B_1$). We denote by $A(\vr): \mathcal{X}^s \supset \mathcal{X}^{s+1} \to \mathcal{X}^s$ the linearized {\it boundary} dynamics of vortex patches at the concentrated steady vortex patch found in Theorem \ref{T:VP}. Consider the spectrum of $A(\vr)$ and the flow  $e^{tA(\vr)}$ of the linearized vortex patch dynamics \eqref{E:LVP} acting on $\mathcal{X}^s$. 

\begin{theorem} \label{T:spectrum} 
The flow $e^{tA(\vr)}$ of the   linearized vortex patch dynamics \eqref{E:LVP} is well-posed. The spectrum $\sigma\big(A(\vr)\big)$, consisting of only isolated eigenvalues, is independent of $s\ge 0$ and there exists a $2N$-dim subspace $Z_0\subset \cap_{s\ge 0} \mathcal{X}^s$ such that the following hold. 
\begin{enumerate}
\item For each $s\ge 0$, there exists a closed subspaces $Z_{Y^s} \subset \mathcal{X}^s$ such that 
\[
Z_{Y^{s'}} \subset Z_{Y^s}, \;\, \forall \ s' >s, \quad \mathcal{X}^s = Z_0 \oplus Z_{Y^s}, \quad A(\vr) Z_0 = Z_0, \quad A(\vr) Z_{Y^{s+1}} = Z_{Y^s}
\]
and $Z_0$ and $Z_{Y^s}$ are approximately orthogonal in $L^2$ and $H^s(S^1)$ metric. 
\item There exists $C>0$ such that 
\[
|\lambda| \le C, \;\; \forall \ \lambda \in \sigma \big(A (\vr) |_{Z_0} \big), \quad  \sigma (A|_{Z_{Y^s}}) \subset \{ \lambda \in i\BFR\mid |\lambda| \ge C^{-1} |\vr|^{-2}\}.
\]
\item In an appropriate coordinate system and for some $C>0$
\[
|\big(A(\vr)|_{Z_0}\big) - \Lambda_N^{-1} J_N D^2 H_{\vec{\CC}} (X_*)| \le C|\vr|. 
\]
\item The linear equation $\p_t v = A(\vr) v$ has a Hamiltonian structure on $\mathcal{X}^0$ whose Hamiltonian is given by a bounded symmetric linear operator $\BFL(\vr)$ on $\mathcal{X}^0$ such that 
\[
\langle \BFL(\vr) e^{tA(\vr)}v_1, e^{tA(\vr)} v_2 \rangle_{L^2} = \langle \BFL(\vr)v_1, v_2 \rangle_{L^2}, \quad \langle \BFL(\vr)v, v \rangle_{L^2} \ge C^{-1}  |\vr|^{-2} |v|_{\mathcal{X}^0}^2, \;\; \forall \ v \in Z_{Y^0}. 
\]
\end{enumerate}
\end{theorem}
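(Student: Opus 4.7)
The strategy is to exploit a two-scale structure of $A(\vr)$: the self-interaction of each $\p \Omega_j$ through the intense vorticity $\omega_j = \mu_j/(\pi r_j^2)\sim r_j^{-2}$ produces fast neutral Kelvin oscillations of order $|\vr|^{-2}$, while the mutual interactions among the $\Omega_j$'s and with $\p \Omega$ produce an $O(1)$ drift of the centers governed by the point vortex dynamics \eqref{E:PVD}. After rescaling $\Omega_j = x_j^* + r_j \widetilde\Omega_j$ and using conformal mappings of $B_1$ onto $\widetilde\Omega_j$ as coordinates on $\mathcal{X}^s$, I would split $A(\vr)=A_0(\vr)+A_1(\vr)$, where $A_0(\vr)$ is block-diagonal and collects, for each $j$, the linearized boundary dynamics of a single disk vortex patch of radius $r_j$ and vorticity $\omega_j$ on $\BFR^2$; the remainder $A_1(\vr)$ carries the cross contributions from $\Omega_k$, $k\ne j$, and from $\p \Omega$. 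Since these cross contributions evaluate Biot-Savart kernels at well-separated points, $A_1(\vr)$ is bounded on every $\mathcal{X}^s$ uniformly in $\vr$, whereas $A_0(\vr)$ has operator norm of order $|\vr|^{-2}$ off the translation modes.

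For each single-disk block I would recall the classical Kelvin wave diagonalization: after fixing the Mobius gauge, the Fourier modes $e^{in\theta}$ on $S^1$ diagonalize $A_0$ with purely imaginary eigenvalues proportional to $\omega_j(1-1/|n|)$ for $|n|\ge 2$, while the $|n|=1$ modes are gauge-equivalent to rigid translations of $\Omega_j$ and lie in the kernel. Collecting these translation modes across $j=1,\ldots,N$ gives a $2N$-dimensional kernel $Z_0^{(0)}$ of $A_0$, and a complementary closed subspace $Z_{Y^s}^{(0)}\subset \mathcal{X}^s$ of high-Fourier modes on which $\sigma(A_0)\subset i\BFR$ with $|\lambda|\ge C^{-1}|\vr|^{-2}$. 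Since $A_1(\vr)$ is bounded and the gap between the two parts of $\sigma(A_0)$ is of order $|\vr|^{-2}$, the Riesz projection associated with a contour of radius $\tfrac12 C^{-1}|\vr|^{-2}$ around $0$ defines the $A(\vr)$-invariant decomposition $\mathcal{X}^s = Z_0\oplus Z_{Y^s}$; its independence of $s$ (hence the nesting $Z_{Y^{s'}}\subset Z_{Y^s}$) and the approximate $L^2/H^s$ orthogonality both follow because the Riesz contour integral is defined by the same resolvent kernel on every $\mathcal{X}^s$. This yields item (1), the lower gap bound in (2), and, since $A(\vr)$ is a bounded Cauchy-type singular integral on each $\mathcal{X}^s$, the well-posedness of $e^{tA(\vr)}$.

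Items (3) and (4) are then obtained from the Hamiltonian structure of vortex patch dynamics. The energy $\CE(\omega)=\tfrac12\int_\Omega |u|^2\,d\mu$ plus circulation corrections is a Hamiltonian for \eqref{E:Euler-V} with respect to the vortex-patch Poisson bracket, and its second variation at the concentrated steady patch produces a bounded symmetric $\BFL(\vr)$ on $\mathcal{X}^0$ satisfying the skew identity $\BFL(\vr)A(\vr) + A(\vr)^*\BFL(\vr)=0$; the invariance identity in (4) is the linearization of $\p_t \CE=0$. At leading order $\BFL(\vr)$ splits block-diagonally between $Z_0$ and $Z_{Y^0}$: on the $2N$-dim slow block the Hessian of $\CE$ in the centers $X$ reduces to $D^2 H_{\vec{\CC}}(X_*)$ up to $O(|\vr|)$ corrections, the $\log r_j$ self-energy terms being independent of $X$; on $Z_{Y^0}$ it equals the sum of the single-disk variational energies of Wan-Pulvirenti \cite{WP85}, which are positive definite and of order $|\vr|^{-2}$ on the $|n|\ge 2$ modes. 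Combining $\BFL(\vr)|_{Z_{Y^0}}\ge C^{-1}|\vr|^{-2}$ with the skew identity, conjugation by $\sqrt{\BFL(\vr)|_{Z_{Y^0}}}$ produces a skew-self-adjoint operator, forcing $\sigma(A|_{Z_{Y^s}})\subset i\BFR$ and completing (2). On the slow block, identifying the induced symplectic form with $\Lambda_N^{-1} J_N$ through the reduction of the vortex-patch Poisson bracket to the centers of the $\Omega_j$'s gives (3).

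The main technical obstacle will be choosing the conformal gauge so that the $2N$ translation zero modes of $A_0$ are cleanly isolated from the shape modes and so that $A_1(\vr)$ remains uniformly bounded on $\mathcal{X}^s$ rather than merely compact; a naive gauge fixing would couple the slow and fast scales already at leading order and destroy the order $|\vr|^{-2}$ spectral gap. A related subtlety is proving the uniform lower bound $\BFL(\vr)|_{Z_{Y^0}} \ge C^{-1}|\vr|^{-2}$: this requires a $\vr$-uniform version of the Wan-Pulvirenti quadratic energy estimate on the rescaled shape perturbations, together with an argument that the $O(1)$ cross contributions from $A_1(\vr)$ and from the boundary $\p\Omega$ cannot spoil this positivity after perturbing the spectral subspaces.
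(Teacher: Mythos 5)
Your overall mechanism is the right one and much of it parallels the paper: a slow $2N$-dimensional block governed (to $O(|\vr|)$) by $\Lambda_N^{-1}J_N D^2H_{\vec\CC}(X_*)$, a fast block of Kelvin-type oscillations of size $|\vr|^{-2}$ on the shape modes, and a symmetric operator $\BFL(\vr)$ coming from the second variation of the energy whose conservation and positivity on the fast block give items (2) and (4). Where you genuinely diverge is in how the invariant splitting is produced: you propose Riesz projections over a contour of radius $\tfrac12 C^{-1}|\vr|^{-2}$ around the origin, whereas the paper (Lemmas \ref{L:TAr-1} and \ref{L:decom-1}) writes $\wt A(\vr)=\BFQ(\vr)^{-1}A(\vr)\BFQ(\vr)$ in block form and obtains $Z_0=\mathrm{graph}(S_0)$, $Z_{Y^s}=\mathrm{graph}(S_{Y^s})$ by a contraction on graphs, which directly yields the quantitative bounds $|M_{\vr}^{-1}S_0|+|S_{Y^s}M_{\vr}^{-1}|\le C|\vr|$ used for the approximate orthogonality and for item (3). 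Your route for ``$\sigma$ independent of $s$'' (same resolvent on every $\mathcal{X}^s$) is also slightly circular as stated; the paper bootstraps regularity of eigenfunctions and resolvents via the commutator estimate \eqref{E:comm} (Lemma \ref{L:spectral-s}).

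The one step that fails as written is the claim that $A_1(\vr)$ (the cross-interaction/boundary part) is \emph{uniformly bounded on $\mathcal{X}^s$ in $\vr$}, together with the ensuing Neumann-series justification of the Riesz projection. The theorem allows the radii $r_j$ to be of completely different orders. The field induced at $\p\Omega_j$ by a normal displacement $\nu_k$ of $\p\Omega_k$ scales like $\frac{\mu_k}{\pi r_k^2}\cdot r_k\|\nu_k\|=O(r_k^{-1}\|\nu_k\|)$, so in any fixed unweighted coordinate system the coupling blocks blow up like $\max_j r_j^{-1}$ (compare Lemma \ref{L:TAr-1}, where only the \emph{weighted} quantities $|\wt B_{0Y}M_{\vr}|$, $|M_{\vr}\wt B_{Y0}|$, $|\wt B_Y M_{\vr}|$ are $O(|\vr|)$). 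With disparate radii, e.g.\ $r_2\ll r_1^3$, the product $\|(\lambda-A_0)^{-1}\|\,\|A_1\|\sim |\vr|^2\cdot|\vr|\max_j r_j^{-1}$ on your contour is \emph{not} small, so the perturbation series does not close. What is actually true, and what the paper's anisotropic weights $M_{\vr}=\mathrm{diag}(r_1,\dots,r_N)$ are designed to capture, is that the coupling is small \emph{blockwise relative to each block's own gap $\sim r_j^{-2}$}; this is exactly the content of the contraction estimates in Lemma \ref{L:decom-1}. Your argument can be repaired by running the Riesz projection (or, more simply, the graph transform) in the $M_{\vr}$-weighted norms, but without that the construction of $Z_0\oplus Z_{Y^s}$ and the gap bound in item (2) are not established. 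A second, smaller repair: the uniform lower bound $\langle\BFL v,v\rangle\ge C^{-1}|\vr|^{-2}|v|^2_{\mathcal{X}^0}$ on $Z_{Y^0}$ cannot be quoted from the single-disk Wan--Pulvirenti estimate alone; one must also control the $O(1)$ cross terms after tilting onto the graph $Z_{Y^0}$, which again requires the weighted smallness of $S_{Y^0}$ (this is Lemma \ref{L:L-2} and Corollary \ref{C:posi} in the paper).
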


The above last statement implies the linear stability of $e^{tA(\vr)}$ on $Z_{Y^0}$, namely,  
\[
\exists C>0, \; \text{ such that } \; |e^{tA}|_{Z_{Y^0}}| \le C, \quad \forall \  t\in \BFR.
\]
Therefore the stability of the linearized vortex patch dynamics $e^{tA(\vr)}$ is determined by the $A(\vr)|_{Z_0}$ which is a Hamiltonian perturbation to the linearized point vortex dynamics. Due to its Hamiltonian structure, the following stability statements hold. More can be found in Remark \ref{R:stability} in Section \ref{S:stability}. Readers are also referred to, for example, \cite{Ar89, LZ17} for more perturbation results on general Hamiltonian operators.

\begin{corollary}
The following statements hold. 
\begin{enumerate}
\item If $\pm D^2 H_{\vec{\CC}} (X_*) >0$, then $e^{tA(\vr)}$ is stable.
\item If $\Lambda_N^{-1} J_N D^2 H_{\vec{\CC}} (X_*)$ has an eigenvalue $\lambda \notin i\BFR$, then $e^{tA(\vr)}$ is unstable and has exponential trichotomy. 
\end{enumerate}
\end{corollary}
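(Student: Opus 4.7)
The plan is to exploit the $A(\vr)$-invariant splitting $\mathcal{X}^0 = Z_0 \oplus Z_{Y^0}$ from Theorem \ref{T:spectrum} in order to reduce the corollary to two essentially independent stability analyses: a finite-dimensional Hamiltonian perturbation argument on the $2N$-dim $Z_0$, and a direct Lyapunov estimate on the infinite-dimensional $Z_{Y^0}$ using the uniformly positive definite $\BFL(\vr)|_{Z_{Y^0}}$.

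First I would dispatch $Z_{Y^0}$ independently of which case we are in. By part (4), $\langle \BFL(\vr) e^{tA(\vr)} v, e^{tA(\vr)} v \rangle_{L^2}$ is conserved in $t$ and bounded below by $C^{-1} |\vr|^{-2} |v|_{\mathcal{X}^0}^2$ for $v\in Z_{Y^0}$. Since $Z_{Y^0}$ is $A(\vr)$-invariant and $\BFL(\vr)$ is bounded, there exists $C_*(\vr)$ with $|\langle \BFL(\vr) w, w\rangle_{L^2}| \le C_*(\vr) |w|_{\mathcal{X}^0}^2$, so
\[
|e^{tA(\vr)}v|_{\mathcal{X}^0}^2 \le C|\vr|^2 \langle \BFL(\vr) e^{tA(\vr)}v, e^{tA(\vr)}v\rangle_{L^2} = C|\vr|^2 \langle \BFL(\vr) v,v\rangle_{L^2} \le C'(\vr) |v|_{\mathcal{X}^0}^2
\]
uniformly in $t\in \BFR$, yielding stability on $Z_{Y^0}$; moreover by part (2) its spectrum lies in $i\BFR$.

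Next I would analyze the $2N$-dim $Z_0$, where by part (3) one has $|A(\vr)|_{Z_0}-A_0|=O(|\vr|)$ with $A_0:=\Lambda_N^{-1}J_N D^2 H_{\vec{\CC}}(X_*)$. In case (1), the hypothesis $\pm D^2 H_{\vec{\CC}}(X_*)>0$ makes $\pm\langle D^2 H_{\vec{\CC}}(X_*)\cdot,\cdot\rangle$ a strict Lyapunov function for $e^{tA_0}$ and forces $\sigma(A_0)\subset i\BFR$ semisimple. The Hamiltonian structure of $A(\vr)$ on $\mathcal{X}^0$ restricts to an $A(\vr)|_{Z_0}$-Hamiltonian structure whose Hessian, in the coordinates of part (3), is a small perturbation of $\pm D^2 H_{\vec{\CC}}(X_*)$, and hence remains definite for $|\vr|$ small. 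The resulting conserved definite quadratic form bounds $e^{tA(\vr)}|_{Z_0}$ uniformly in $t$; combined with the bound on $Z_{Y^0}$ this proves statement (1). In case (2), the Hamiltonian symmetry $\sigma(A_0)=-\sigma(A_0)=\overline{\sigma(A_0)}$ together with $\sigma(A_0)\not\subset i\BFR$ forces an eigenvalue of $A_0$ with strictly positive real part. Continuity of isolated spectrum in the finite-dimensional $Z_0$ produces an eigenvalue of $A(\vr)|_{Z_0}$ with $\Re\lambda>0$ for small $|\vr|$, the associated eigenvector grows exponentially, and instability follows. Splitting $Z_0 = Z_0^u\oplus Z_0^c\oplus Z_0^s$ by sign of $\Re\lambda$ and taking $Z_0^c\oplus Z_{Y^0}$ as the center subspace, together with the uniform bound from Step 1 and the purely imaginary spectrum on $Z_{Y^0}$, delivers the exponential trichotomy on $\mathcal{X}^0$.

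The main technical obstacle will be verifying that the Hamiltonian structure carried by $\BFL(\vr)$ on $\mathcal{X}^0$ truly restricts to $Z_0$ in a way compatible with the perturbation statement of part (3) of Theorem \ref{T:spectrum}. Because $Z_0$ and $Z_{Y^0}$ are only \emph{approximately} $L^2$-orthogonal, $\langle \BFL(\vr)v,v\rangle_{L^2}$ generally carries cross terms between the two summands that must be estimated in order to conclude that the diagonal $Z_0$-block of $\BFL(\vr)$ is indeed close to $\pm D^2 H_{\vec{\CC}}(X_*)$ and hence definite. This amounts to a block-matrix estimate based on the explicit construction of $\BFL(\vr)$ and the near-orthogonality statements in Theorem \ref{T:spectrum} -- delicate, but routine once that construction is in hand.
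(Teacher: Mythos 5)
Your proposal is correct and follows essentially the same route as the paper: uniform boundedness of $e^{tA(\vr)}$ on $Z_{Y^0}$ from the conserved, uniformly positive quadratic form $\langle \BFL(\vr)\cdot,\cdot\rangle_{L^2}$, plus finite-dimensional Hamiltonian perturbation theory on the invariant $2N$-dim subspace $Z_0$, where the restricted Hamiltonian $\frac12\langle (I+S_0)^*\BFL(\vr)(I+S_0)y,y\rangle$ is an $O(|\vr|)$ perturbation of $-\frac12\langle D^2 H_{\vec{\CC}}(X_*)y,y\rangle$. The cross-term issue you flag at the end is precisely what the paper resolves via Lemma \ref{L:L-2} together with the smallness $|M_{\vr}^{-1}S_0|\le C|\vr|$ of the graph map defining $Z_0$.
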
 

One is reminded that the above stability analysis is for the boundary evolution of the vortical domains. The subspaces $Z_0$ and $Z_{Y^s}$ roughly represent the variations of the vorticity locations and shapes of the vortical domains, respectively. We also see the separation of scales in the $Z_0$ and $Z_{Y^0}$ directions, where the motion is much faster and very oscillatory along $Z_{Y^0}$. Therefore the vortex patch dynamics near such steady concentrated patch is roughly a normally elliptic type singular perturbation problem. The vortex locations approximately follow the point vortex dynamics and the shapes of the near circular vortical domains evolve in a fast rotating way with large angular velocity. Theorem \ref{T:spectrum} lays the cornerstone for future studies of local dynamics, such as invariant manifolds and special solutions, near steady concentrated vortex patches. 

We also prove the existence of Lipschitz steady concentrated vorticities in Section \ref{S:C0}. Again it is not assumed that the sizes of the small $r_j$ of each vortical domain are of the same order. Remark \ref{R:shape} still applies in the case. Though also obtained via an Implicit Function Theorem argument, the lack of local uniqueness of such steady concentrated vorticities in the following theorem is due to the infinitely many choices of the dependence of steady vorticities on  the corresponding stream functions. 

\begin{theorem} \label{T:C0}
Given circulations $\vec{\CC} \in \BFR^n$, integer $N>0$, vorticities $(\mu_1, \ldots, \mu_N) \in (\BFR\backslash \{0\})^N$, and a non-degenerate critical point $X_*= (x_1^*, \ldots, x_N^*) \in \Omega^N$ of $H_{\vec{\CC}}$ (defined in \eqref{E:E0}), there exists $\ep_0>0$ such that for each $\vec{r}=(r_1, \ldots, r_N)\in (\BFR^+)^N$ with $|\vec{r}| < \ep_0$, there exists a steady concentrated vorticity $\omega \in C^{0, 1} (\Omega)$ satisfying \eqref{E:localized-V} and \eqref{E:localization}. Moreover, for any $s>\frac 32$ such that $\p \Omega$ consists of $H^{s}$ curves, there exists $\ep_s>0$ such that for $|\vr|<\ep_s$, $\omega$ is in $H^{s+\frac 12}$ on each of $\bar \Omega_1, \ldots, \bar \Omega_N, \bar \Omega \backslash \cup_{j=1}^N \Omega_j$ and for each vortical domain $\Omega_j$, $\p (r_j^{-1} \Omega_j)$ is an $O(|\vec{r}|r_j)$ perturbation to the circle $S^1$ in the $H^s (S^1)$ topology.
\end{theorem}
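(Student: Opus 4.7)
The plan is to follow the same Implicit Function Theorem strategy as in Theorem \ref{T:VP}, but with the piecewise constant vortex patch replaced by a freely chosen Lipschitz profile on each vortical component. Since any steady solution of \eqref{E:Euler-V} satisfies $(u\cdot\nabla)\omega = 0$, on each $\Omega_j$ the vorticity must take the form $\omega_j = f_j(\psi - c_j)$ for some profile $f_j$ and level constant $c_j$. Lipschitz continuity together with the support condition $\omega = 0$ outside $\cup_j \Omega_j$ forces $f_j(0)=0$ and $\partial\Omega_j \subset \{\psi = c_j\}$, automatically making $\partial\Omega_j$ a streamline. I would fix admissible Lipschitz profiles $f_j$ with $f_j(0)=0$, suitably normalized so that $\int_{\Omega_j}\omega_j\, d\mu = \mu_j$ can be achieved as a free parameter; the infinitely many admissible choices of $f_j$ account for the lack of local uniqueness claimed in the theorem.

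Next I would work in the rescaled variables $y = r_j^{-1}(x - x_j)$ used in Theorem \ref{T:VP}, parameterizing each rescaled domain $r_j^{-1}\Omega_j$ as $\Phi_j(B_1)$ for a conformal mapping $\Phi_j$ close to the identity in the boundary space $\mathcal{X}^s$, modulo the Mobius group on $B_1$. The stream function $\psi$ on $\Omega_j$ decomposes into a self-induced Poisson contribution from $\omega_j$ on the near-circular domain $\Phi_j(B_1)$, plus a harmonic contribution from the other vortices, the boundary $\partial\Omega$, and the prescribed circulations, whose leading term as $|\vec{r}|\to 0$ is the Kirchhoff-Routh potential at $x_j$. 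The IFT unknowns are then the rescaled profile $\tilde\omega_j \in H^{s+1/2}(B_1)$, the shape $\Phi_j\in\mathcal{X}^s$, the position $x_j\in\Omega$, and the constant $c_j$; the equations are the pointwise identity $\omega_j = f_j(\psi - c_j)$ on $\Omega_j$ together with the boundary identity $(\psi-c_j)|_{\partial\Omega_j}=0$.

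At the reference solution corresponding to $\vec{r}=0$, each rescaled domain is $B_1$, the profile $\tilde\omega_j$ is the unique radially symmetric solution of the 1d ODE obtained from $-\Delta\psi = f_j(\psi - c_j)$ under the normalization fixing $\mu_j$, the position is $x_j = x_j^*$ from the given non-degenerate critical point of $H_{\vec{\CC}}$, and $c_j$ is determined by matching to the outer harmonic part. Elliptic regularity applied afterward to the Poisson equation with $H^s$ boundary data gives the claimed $H^{s+1/2}$ regularity on each of $\bar\Omega_j$ and on $\bar\Omega\setminus\cup_j\Omega_j$, and the Lipschitz regularity on $\Omega$ follows from $f_j(0)=0$.

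The main obstacle is verifying invertibility of the linearization at this reference solution, uniformly in small $|\vec{r}|$. At leading order the linearization decouples into a position block whose invertibility is exactly the non-degeneracy of $D^2 H_{\vec{\CC}}(X_*)$, handled as in Theorem \ref{T:VP}, and a shape-profile block on $\mathcal{X}^s\times H^{s+1/2}(B_1)$. The shape block is essentially a Dirichlet-to-Neumann type operator on $S^1$ perturbed by multiplication by $f_j'$ evaluated on the radial reference stream function; unlike the vortex patch setting, where $f_j$ is a jump producing a singular boundary contribution, here this perturbation is a bounded self-adjoint multiplier, and the block is a first-order elliptic pseudo-differential operator on $S^1$ whose invertibility follows by Fourier analysis on $S^1$. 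Once invertibility is established, the Implicit Function Theorem in the $H^s$ scale yields the desired Lipschitz steady vorticity.
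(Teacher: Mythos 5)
Your overall strategy --- conformal parametrization of near-circular vortical domains, radial reference profiles on disks centered at a non-degenerate critical point of $H_{\vec{\CC}}$, and an Implicit Function Theorem in rescaled variables --- is the same as the paper's, and most of your reductions (the splitting of the stream function into a self-interaction part plus a regular part whose limit is the Kirchhoff--Routh gradient, the position block being controlled by $D^2 H_{\vec{\CC}}(X_*)$) correspond to Lemma \ref{L:smooth-1} and Corollary \ref{C:CF}. The paper organizes the IFT differently: for each fixed shape it first solves the pulled-back interior problem \eqref{E:semiLE-1} on $B_1$, eliminating the profile as an unknown, so that only the boundary condition $\Psi|_{\p \Omega_j}=const$ remains as the equation for $(X,\beta)$; your single simultaneous IFT in (profile, shape, position, constant) is bulkier but not wrong in principle. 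That difference is organizational.

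The genuine gap is in the shape(-profile) block of the linearization. You assert it is ``a first-order elliptic pseudo-differential operator on $S^1$ whose invertibility follows by Fourier analysis.'' Ellipticity yields at most a Fredholm property; it does not exclude a kernel in some finite Fourier mode, and excluding that is precisely the hard point. Moreover the operator is not a bounded multiplier perturbation of a Dirichlet-to-Neumann map: $f_j'$ enters through the interior operator $\Delta - f_j'(\psi_j^*)$ on $B_1$, and the mode-$m$ coefficients $\lambda_{j,m}$ of the linearized self-interaction (Lemma \ref{C:Dh}) are determined by the radial ODEs attached to this Schr\"odinger-type operator. Whether $\lambda_{j,m}\ne 0$ for all relevant $m$, with a lower bound $|\lambda_{j,m}|\ge \delta$ uniform in $m$, depends on structural hypotheses on $f_j$ and on $\psi_j^*$: the paper imposes \eqref{localpatch:gammaassumptions} (oddness, $f_j'(0)<0$, the sign condition on $\BFR^-$, and non-degeneracy of $\Delta - f_j'(\psi_j^*)$) and then invokes a nontrivial result of \cite{SWZ13} to get this bound. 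Your proposal assumes only that $f_j$ is Lipschitz with $f_j(0)=0$, under which the claim fails in general --- the interior problem need not even admit a unique non-degenerate radial solution, which you also use implicitly when you speak of ``the unique radially symmetric solution'' and when you solve the interior Poisson problem by IFT. To close the gap you must (i) add the non-degeneracy hypothesis on $\Delta - f_j'(\psi_j^*)$ and the accompanying structural conditions on $f_j$, and (ii) give an actual argument for the uniform invertibility of the shape block mode by mode; neither follows from ellipticity or generic Fourier analysis.
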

 
The stability of such $C^{0,1}$ concentrated steady vorticity is a much more subtle issue as it involves the stability of the vorticity profiles themselves and their interactions with the shapes of the vortical domains. We leave it for future considerations. 

There are rich literatures on the existence of steady (or relatively steady) concentrated vorticities as well as their stability and even some parts of our above results had been derived previously. In \cite{Wan88}, Wan obtained steady concentrated vortex patches near  non-degenerate steady point vortex configurations on general 2-dim smooth bounded domains as well as near rotating ones on $\BFR^2$. More recently, on a simply connected domain $\Omega$, by carefully studying a delicate semilinear elliptic equation on $\Omega$ with a singular parameter, based on either a variational or Lyapunov-Schmidt reduction approach,  steady concentrated $C^{0,1}$ vorticities \cite{SV10, CLW14} and vortex patches \cite{CPY15} are also derived near non-degenerate steady point vorticities. While employing subtle and deep analysis from semilinear elliptic problems and calculus of variations, only the upper (and sometimes lower) bounds of the vortical domains $\Omega_j$ were given in \cite{SV10, CLW14, CPY15}, however, their smoothness and asymptotic properties were not present which would be crucial for the linearized analysis. More references include \cite{Tu83, GIPZ10, CGPY17} {\it etc.} The sizes $r_j$ of each $\Omega_j$ are all assumed to be of the same order in these references. The argument in \cite{Wan88} based on \cite{WP85} is more perturbational and directly on the unknown $\p \Omega_j$ written as graphs over $S^1$ in polar coordinates. While our proof of Theorem \ref{T:VP} is somewhat parallel to that in \cite{Wan88}, we adopted a quite different conformal mapping parameterizations of the vortical domains, very carefully tracked the smoothness of the mappings in function spaces of $\p \Omega_j$ and their asymptotic properties with respect to $\vr=(r_1, \ldots, r_N)$ which may not be of the same order of smallness, and avoided the reduction argument by a simple rescaling. Our approach is easily adapted to also yield the above $C^{0,1}$ steady concentrated vorticities (Theorem \ref{T:C0}). Many of the previous stability results of steady vortex patches are nonlinear with respect to the $L^p$ metric for the vorticity, following a Lyapunov function approach by Arnold, see \cite{Tu83, WP85, Wan86, MP94, CW17a}, {\it etc}. While being nonlinear, the $L^p$ control on the vorticity does not yield as much information on the geometric evolution of the vortical domain as those results in smooth function spaces of the vortical domain boundaries. The studies on the linearization in the latter framework include \cite{WP85, Wan86, Wan88} {\it etc.} on circular, elliptic, some rotating vorticities {\it etc.} in $\BFR^2$. While invariant subspace splitting was also obtained in \cite{Wan86, Wan88}, the construction in Section \ref{S:stability} is more explicit with detailed estimates with respect to $\vr=(r_1, \ldots, r_N)$. See also \cite{EP13} for a linearization framework based on contour integrals. 

This paper is the first step of our longer term project of studying the temporal Euler dynamics of the vortical domain boundaries of concentrated and compactly supported vorticities. The analysis on compactly supported vorticities is actually a free boundary problem as the vortical domains are largely the main unknowns. In fact the main goal of this paper is to establish a mathematically operable rigorous analytic framework to study the dynamics of  the rather geometric objects of vortical domain boundaries. 
Due to the Poisson equations involved in the relationship between the vorticity and the stream function, we parametrize a class of vortical domains by unique conformal mappings  which interact well with $\Delta$. See some basic properties of conformal mappings in Subsections \ref{SS:CM} and \ref{SS:Ham-VP}, which are improved from those in \cite{SWZ13}. We also would like to point out \cite{Bu82, HMV13} for conformal mappings used to study rotating vortex patches in $\BFR^2$. The construction of steady states and the analysis of their linear stability are merely our initial attempts to study in this framework the local dynamics, such as invariant manifolds, of concentrated vorticity as a singular perturbation problem involving multiple spatial and temporal scales. 

The paper is organized as follows. In Section \ref{S:Pre}, we outline some background materials including the stream functions, the Hamiltonian structures for the  2-dim Euler equation in the general vorticity formulation and for vortex patches dynamics, the conditions on steady concentrated vorticities, and the conformal mapping parametrization of domain. Section \ref{S:VP} is devoted to the construction of steady concentrated vortex patches, whose spectral stability is analyzed in Section \ref{S:stability}. Finally, $C^{0,1}$ steady concentrated vorticities are obtained in Section \ref{S:C0}.

\vspace{.08in} 
\noindent {\bf Notations.} 
\begin{itemize} 
\item Throughout the paper, $C>0$ denotes a generic upper bound which may change from line to line, but independent of $\vr$. 
\item We usually use $\omega$, $\wt \omega_j$, etc. to denote quantities related to vorticities, 
$\Omega_j$
vortical domains, $\Gamma$, $\Gamma_j$, etc. conformal mappings defined on unit disk $B_1$ where $B_R = \{z \in \BFC^1\mid |z|<R\}$, $J$, $\mathbf{J}$, etc. various operator satisfying $J^* = -J$, and $\Psi$ the stream function such that velocity$=J \nabla \Psi$, where specifically $J =  \begin{pmatrix} 0 & -1 \\ 1 & 0 \end{pmatrix}$. 
\item We use $d\mu$ to denote the Lebesgue measure or $d\mu_x$ the  measure in $x$ variable.  
\item Along $\p \Omega_j$ of each vortical domain $\Omega_j$, $j=1, \ldots, N$,  and each component of $\p \Omega$, $N_j$, $T_j = J N_j$, $\BFN$, and $\BFT = J\BFN$ denote the {\it outward} unit normal vector 
and the 
unit tangent vector. 
\item The operator $\Delta_0^{-1}$, defined in Lemma \ref{L:Stream-F}, denotes the inverse Laplacian with circulation 0 along each $C_j$, $1\le j\le n$. 
\item Usually $D$, $\p$, $'$, and $\nabla$ are used for differentiations in finite dimensional spaces, while $\CD$ is reserved for differentiations in infinite dimensional function spaces. 
\item We always use $\langle \cdot, \cdot \rangle$ to denote the $L^2$ (or some times $\BFR^k$) duality pair. 
Sometimes complex numbers are treated as 2-dim vectors and notation $z_1 \cdot z_2 =$Re$(\bar z_2 z_1)$ is used. 
\end{itemize}

\section{Preliminaries: Stream functions, Hamiltonian structures, point vortex dynamics, and parametrization of domains} \label{S:Pre}

In this section we discuss some basic issues related to the study of concentrated steady vorticities of the Euler equation in $\Omega$.

\subsection{Stream functions} \label{SS:Stream-F}

Stream functions and vorticty are fundamental quantities for the Euler equation in 2-dim. For any vector field $u$ on $\Omega$ satisfying \eqref{E:Euler-BC}, $\nabla \cdot u =0$ implies that locally there exists a unique (up to a constant) scalar valued function $\Psi$  such that 
\[
u = J \nabla \Psi, \quad J =  \begin{pmatrix} 0 & -1 \\ 1 & 0 \end{pmatrix}, \text{ and } \Delta \Psi = \omega.
\]
To see that this stream function $\Psi$ can actually be extended globally in $\Omega$, for any piecewise $C^1$ simply closed curve $\gamma \subset \Omega$, let $C_{j_l}$, $1\le j_l \le n$ for $l=1, \ldots, m$, be the interior components of $\p \Omega$ enclosed by $\gamma$ and then we can compute using \eqref{E:Euler-BC} 
\[
- \oint_\gamma J u  \cdot d\vec{S} = \sum_{l=1}^m  \int_{C_{j_l}} u \cdot \BFN_{j_l} dS =0. 
\]
Therefore, even though $\Omega$ may not be simply connected, such velocity field always has a stream function $\Psi$ in $\Omega$ due to \eqref{E:Euler-BC} which also implies 
\be \label{E:Stream-F-0}
\Psi|_{C_j} = const, \quad \int_{C_j} \BFN_j \cdot \nabla \Psi dS = \oint_{C_j} u \cdot d\vec{S} = \CC_j, \qquad 1\leq j \leq n.
\ee
We normalize it so that 
\be \label{E:Stream-F}
u = J \nabla \Psi, \quad \Psi|_{C_0} =0. 
\ee
The existence of such team function is standard \cite{Lin43}. In the rest of the subsection, we will provide the explicit representation of $\Psi$ in terms of $\omega$ and $\vec{\CC}$, which is a slightly different treatment from \cite{Fl99}. 

As a preparation, for any $0\leq j \leq n$, let $\CH_j(x)$ be the solution to  
\[
\Delta \CH_j =0, \text{ in } \Omega, \quad \CH_j |_{C_k} = \delta_{jk}, 
\]
where $\delta_{jk}$ is the Kronecker symbol. Clearly $\Sigma_{j=0}^n \CH_j (x)$ is harmonic and equal to $1$ on $\p \Omega$ and thus identically $1$ on $\Omega$. Therefore we skip $j=0$ and define the $n\times n$ matrix $\CN = (\CN_{jk})$, which depends only on $\Omega$, as 
\be \label{E:DN}
\CN_{jk} = \int_{C_j} \BFN_j \cdot \nabla \CH_k dS.
\ee

\begin{lemma} \label{L:DN}
It holds $\CN^T=\CN$ and $\CN >0$. 
\end{lemma}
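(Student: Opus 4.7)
The plan is to reduce $\CN_{jk}$ to a Dirichlet energy inner product via Green's identity, from which symmetry is immediate and positive definiteness follows from the maximum principle.

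First, I would apply Green's identity on $\Omega$ to the pair $\CH_j, \CH_k$. Since $\Delta \CH_k = 0$ and $\CH_j|_{C_l} = \delta_{jl}$ for $0 \le l \le n$, the boundary sum over $\p \Omega = \cup_{l=0}^n C_l$ collapses to a single term:
\[
\int_\Omega \nabla \CH_j \cdot \nabla \CH_k \, d\mu = \sum_{l=0}^n \int_{C_l} \CH_j \, \BFN_l \cdot \nabla \CH_k \, dS = \int_{C_j} \BFN_j \cdot \nabla \CH_k \, dS = \CN_{jk}.
\]
Since the left-hand side is manifestly symmetric in $j,k$, this establishes $\CN^T = \CN$.

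Next, for positivity I would take any $v = (v_1, \ldots, v_n)^T \in \BFR^n$ and set $H = \sum_{j=1}^n v_j \CH_j$, which is harmonic on $\Omega$ with boundary data $H|_{C_0} = 0$ and $H|_{C_j} = v_j$ for $1 \le j \le n$. The above identity gives
\[
v^T \CN v = \sum_{j,k=1}^n v_j v_k \int_\Omega \nabla \CH_j \cdot \nabla \CH_k \, d\mu = \int_\Omega |\nabla H|^2 \, d\mu \ge 0.
\]
If $v \ne 0$, some $v_j \ne 0$, so $H$ is not identically zero on $\p\Omega$; if $\nabla H \equiv 0$ then $H$ would be constant and the boundary condition $H|_{C_0}=0$ would force $H \equiv 0$, contradicting $v_j \ne 0$. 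Hence $v^T \CN v > 0$ whenever $v \ne 0$, which is the claimed positive definiteness.

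The only mild subtlety is bookkeeping the orientations: on the interior components $C_j$ ($j \ge 1$) the outward unit normal $\BFN_j$ points into the holes (away from $\Omega$), but because Green's identity uses the outward normal to $\Omega$ on \emph{all} components of $\p\Omega$, the signs work out automatically and no case analysis is needed. Everything else is routine, so I do not anticipate a real obstacle beyond stating the integration by parts cleanly on the multiply connected domain.
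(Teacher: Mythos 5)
Your proof is correct and follows essentially the same route as the paper: Green's identity rewrites $\CN_{jk}$ as the Dirichlet inner product $\int_\Omega \nabla \CH_j \cdot \nabla \CH_k\, d\mu$, giving symmetry at once, and positive definiteness follows because the harmonic combination $\sum_j v_j \CH_j$ vanishes on $C_0$ but not on some $C_{j_0}$, so its gradient cannot vanish identically. No gaps.
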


\begin{proof} 
The symmetry of $\CN$ is due to 
\be \label{E:DN-1}
\CN_{jk} = \int_{C_j} \BFN_j \cdot \nabla \CH_k dS= \int_{\p \Omega} \CH_j \BFN \cdot \nabla \CH_k dS = \int_{\Omega} \nabla \CH_j \cdot \nabla \CH_k d\mu. 
\ee
Moreover $\forall b= (b^1, \ldots, b^n)^T \in \BFR^n \backslash \{0\}$, the above equality implies 
\[
b^T \CN b = \int_\Omega |\nabla \phi|^2 d\mu  >0, \text{ where }  \phi = \Sigma_{j=1}^n b^j \CH_j. 
\]
Here we used the fact that $\nabla \phi$ does not vanish entirely as $\phi|_{C_0} \equiv 0$ and $\phi|_{C_{j_0}} \ne 0$ for some $j_0$. Therefore the positivity of $\CN$ follows immediately. 
\end{proof}

Let $\BFG(x, y)$, $x, y \in \Omega$, $x\ne y$, be the Green function on $\Omega$ satisfying 
\[
-\Delta_y \BFG(x, y) = \delta_x(y), \;x, y \in \Omega, \; \text{ and } \; \BFG(x, y) =0, \; x \in \Omega, \; y\in \p \Omega
\]
where $\delta_x(\cdot)$ is the delta mass at $x$. 
It is standard that 
\be \label{E:GF}
\BFG(x, y) = - \frac 1{2\pi} \log |x-y| + \tilde g(x, y), \qquad \CH_j (x) = - \int_{C_j} \BFN_j (y) \cdot \nabla_2 \BFG(x, y) dS_y
\ee 
and $\tilde g(x, y) \in C^\infty (\Omega \times \Omega)$ satisfy $\tilde g(x, y) = \tilde g(y, x)$. Let 
\be \label{E:Circ-coe}
\CN^{-1} = (\CN^{jk}),  \quad \vec{c} = (c_1, \ldots, c_n)^T= \CN^{-1} \vec{\CC}, \; \; c_j = \sum_{k=1}^n \CN^{jk} \CC_k 
\ee
and 
\be \label{E:Gc}
\BFG_0 (x, y) = \BFG(x, y) + \sum_{j, k=1}^n \CN^{jk} \CH_j(x) \CH_k(y). 
\ee

\begin{lemma} \label{L:Stream-F}
Given any $\vec{\CC} \in \BFR^n$ and $\omega \in H^{s} (\Omega)$, $s> -\frac 12$, let 
\[
\Psi (x) = (\Delta_0^{-1} \omega)(x) + \sum_{j=1}^n c_j \CH_j(x), \; \text{ where }  \;(\Delta_0^{-1} \omega)(x) \triangleq \int_\Omega- \BFG_0 (x, y) \omega(y) d\mu_y . 
\]
Then $u =J \nabla \Psi$ satisfies \eqref{E:Euler-BC}, \eqref{E:circulation}, \eqref{E:circ-Stokes}, and $\nabla \times u = \omega$. Moreover, 
\[
\Psi|_{C_0} =0, \quad \Psi|_{C_j}  = c_j - \sum_{k=1}^n \CN^{jk} \int_\Omega \CH_k \omega d\mu = \sum_{k=1}^n \CN^{jk} (\CC_k -  \int_\Omega \CH_k \omega d\mu)
\]
for $1\leq j \leq n$, and 
\[
|\Psi|_{H^{s+2}} + |u|_{H^{s+1}} \le C (|\omega|_{H^s} + |\CC|). 
\]
for some $C>0$ depending only on $\Omega$. The lemma holds for $s\ge -1$ if $n=0$. 
\end{lemma}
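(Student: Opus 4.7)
The plan is to verify the four assertions ($\nabla\times u=\omega$, the stated formulas for $\Psi|_{C_j}$, the prescribed circulations, and the $H^{s+2}$ estimate) in order, exploiting the fact that the correction $\sum_{j,k}\CN^{jk}\CH_j(x)\CH_k(y)$ added to $\BFG$ is harmonic in each variable separately and is precisely the one designed to kill all circulations of $\Delta_0^{-1}\omega$ on the interior components $C_l$. First I would observe that since each $\CH_j$ is harmonic, $-\Delta_y\BFG_0(x,y)=\delta_x(y)$ and (by symmetry of $\BFG$) also $-\Delta_x\BFG_0(x,y)=\delta_x(y)$, hence $\Delta(\Delta_0^{-1}\omega)=\omega$ and $\Delta\Psi=\omega$; consequently $\nabla\times u=\nabla\times(J\nabla\Psi)=\Delta\Psi=\omega$. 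For the boundary values I would fix $l\ge 1$ and $x\in C_l$: since $\BFG(x,y)=\BFG(y,x)=0$ whenever $x\in\p\Omega$ and $\CH_j(x)=\delta_{jl}$, the definition of $\BFG_0$ collapses to $\BFG_0(x,y)=\sum_k\CN^{lk}\CH_k(y)$, so $(\Delta_0^{-1}\omega)(x)=-\sum_k\CN^{lk}\int_\Omega\CH_k\omega\,d\mu$; combined with $\sum_j c_j\CH_j(x)=c_l=\sum_k\CN^{lk}\CC_k$, this yields the claimed expression for $\Psi|_{C_l}$. On $C_0$ both terms vanish, so $\Psi|_{C_0}=0$, and the constancy of $\Psi$ on each boundary component forces $u\cdot\BFN=J\nabla\Psi\cdot\BFN=-\nabla\Psi\cdot\BFT=0$ on $\p\Omega$.

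For the circulations on $C_l$, $l\ge 1$, writing $u\cdot d\vec{S}=\nabla\Psi\cdot\BFN\,dS$ and splitting $\Psi$, I would arrive at
\[
\oint_{C_l}u\cdot d\vec{S} = -\int_\Omega \omega(y)\Big(\int_{C_l}\nabla_x\BFG_0(x,y)\cdot\BFN_l(x)\,dS_x\Big)d\mu_y + \sum_j c_j\,\CN_{lj}.
\]
The key cancellation is then: by the symmetry $\BFG(x,y)=\BFG(y,x)$ together with the representation $\CH_l(y)=-\int_{C_l}\BFN_l\cdot\nabla_2\BFG(y,\cdot)\,dS$ from \eqref{E:GF}, the $\BFG$-contribution to the inner integral equals $-\CH_l(y)$, while the harmonic correction contributes $\sum_{j,k}\CN^{jk}\CN_{lj}\CH_k(y)=\CH_l(y)$ by $\CN^{-1}\CN=I$ combined with $\CN^T=\CN$ (Lemma \ref{L:DN}). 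Hence the inner integral vanishes identically in $y$, $\Delta_0^{-1}\omega$ has zero circulation on every interior $C_l$, and $\oint_{C_l}u\cdot d\vec{S}=(\CN\vec{c}\,)_l=\CC_l$. The Stokes relation $\sum_{j=0}^n\CC_j=\int_\Omega\omega\,d\mu$ then follows immediately from the divergence theorem applied to $\Delta\Psi=\omega$ in $\Omega$, which also pins down $\CC_0$.

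For the $H^{s+2}$ estimate I would apply standard Dirichlet elliptic regularity to $\Delta\Psi=\omega$ in $\Omega$ with the boundary values just computed, observing that each $\Psi|_{C_l}$ is bounded by $C(|\omega|_{H^s}+|\vec{\CC}|)$ and that the $\CH_j\in C^\infty(\bar\Omega)$ permit a lift of the boundary data to an $H^{s+2}$ function with commensurate norm; the restriction $s>-\tfrac12$ is only needed so that the traces and the scalar integrals $\int_\Omega\CH_k\omega\,d\mu$ have unambiguous meaning, while when $n=0$ no such traces appear and the zero-Dirichlet estimate holds down to $s\ge -1$. The main obstacle is the circulation step: the algebraic cancellation $\int_{C_l}\nabla_x\BFG_0\cdot\BFN_l\,dS_x\equiv 0$ is the very reason the $\CN^{-1}$-weighted correction was introduced in \eqref{E:Gc}, and establishing it requires careful bookkeeping among the symmetry of $\BFG$, the representation \eqref{E:GF} of $\CH_l$ as a boundary-normal integral of $\BFG$, and the matrix identities for $\CN$; once that identity is in hand, everything else is a direct reading off of boundary values and a routine application of elliptic regularity.
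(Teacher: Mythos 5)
Your proposal is correct and follows essentially the same route as the paper: $\Delta\Psi=\omega$ and the boundary values are read off from the definitions of $\BFG_0$ and $\CH_j$, the circulation identity is verified using the symmetry of $\BFG$, the representation $\CH_l(y)=-\int_{C_l}\BFN_l\cdot\nabla_2\BFG(y,\cdot)\,dS$, and $\CN\CN^{-1}=I$, and the norm bound is standard elliptic regularity. The only cosmetic difference is that you exhibit the cancellation at the level of the kernel, showing $\int_{C_l}\BFN_l(x)\cdot\nabla_x\BFG_0(x,y)\,dS_x\equiv 0$, whereas the paper carries out the same cancellation after integrating against $\omega$.
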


\begin{proof}
The definition of $\Psi$, $\BFG$, and $\CH_j$ immediately imply $\Delta \Psi =\omega$ and the desired constant values of $\Psi|_{C_j}$, $0\le j\le n$. 
Clearly $\nabla \cdot u = \nabla \cdot (J\nabla \Psi) =0$. We only need to verify the circulation condition using \eqref{E:GF}, \eqref{E:Circ-coe} and the symmetry of $\BFG$
\begin{align*}
&\int_{C_j} \BFN_j \cdot \nabla \Psi dS = \sum_{k=1}^n c_k \CN_{jk} - \int_{C_j} \int_\Omega \BFN_j (x) \cdot \nabla_1 \BFG(x, y)  \omega (y) d\mu_y dS_x \\
&\qquad \qquad \qquad \qquad \qquad \qquad  - \sum_{k, l =1}^n \CN^{kl} \int_\Omega \CH_l \omega d\mu \int_{C_j} \BFN_j \cdot \nabla \CH_k dS \\
=& \CC_j + \int_\Omega\CH_j  \omega d\mu - \sum_{k, l =1}^n \CN^{kl} \CN_{jk} \int_\Omega \CH_l \omega d\mu = \CC_j. 
\end{align*}
Finally the estimates on $\Psi$ and $u$ follow from the standard elliptic estimates and the proof is complete. 
\end{proof}

\begin{remark} \label{R:Delta_0}
It is clear that $\Psi_0 = \Delta_0^{-1} \omega$ corresponds to the unique stream function satisfying $\Psi_0|_{C_0} =0$, $\Psi_0|_{C_j}=const$ for $1 \leq j \leq n$, and that its velocity field $J\nabla \Psi_0$ has vorticity $\omega$ and circulation 0 along $C_j$, $1\le j\le n$. $\BFG_0$ is the corresponding Green function. Apparently, $\Delta_0^{-1}$ is symmetric with respect to the $L^2$ inner product. If $\Omega$ is simply connected, namely $n=0$, then $\Delta_0^{-1} = \Delta^{-1}$, the inverse Laplacian with zero Dirichlet boundary condition. In this case, we may take $s\geq -1$. 
\end{remark}

\subsection{Hamiltonian structures of the Euler equation} \label{SS:Ham-Euler}

The Euler equation has the conserved energy functional 
\be \label{E:energy}
\BFE = \frac 12 \int_\Omega |u|^2 d\mu = \frac 12 \int_\Omega |\nabla \Psi|^2 d\mu. 
\ee
Writing the energy in terms of the vorticity along with an appropriate symplectic structure, the Euler equation \eqref{E:Euler-V} can be put in a Hamiltonian formulation. 

\begin{lemma} \label{L:Ham-E}
For smooth solutions, \eqref{E:Euler-V} is equivalent to
\[
\p_t \omega = J_0(\omega) \CD E_0(\omega) 
\] 
where the explicit form of $E_0(\omega)=\BFE$ is given in \eqref{E:energy-V} and  
\[
J_0(\omega) f  = \nabla \cdot \big( (u \cdot \nabla) \nabla f\big), 
\]
with $u = J\nabla \Psi$ determined by $\omega$ through Lemma \ref{L:Stream-F}. 
\end{lemma}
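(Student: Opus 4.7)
The plan is to verify that both sides of the claimed identity reduce to $-(u\cdot\nabla)\omega$, the right side of the vorticity equation \eqref{E:Euler-V}. For the variational derivative, I would write $E_0 = \frac{1}{2} \int_\Omega |\nabla\Psi|^2 \, d\mu$ with $\Psi = \Delta_0^{-1}\omega + \sum_j c_j \CH_j$ as given by Lemma \ref{L:Stream-F}. Since the circulations $\vec{\CC}$, and hence the $c_j$, are fixed parameters, a variation $\delta\omega$ induces $\delta\Psi = \Delta_0^{-1}\delta\omega =: \eta$, which by Remark \ref{R:Delta_0} satisfies $\eta|_{C_0}=0$, is constant on each interior component $C_j$, and carries zero circulation $\int_{C_j}\BFN_j\cdot\nabla\eta \, dS = 0$. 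Integrating by parts in the form that moves the Laplacian onto $\eta$ (rather than onto $\Psi$),
\[
\delta E_0 = \int_\Omega \nabla\Psi\cdot\nabla\eta \, d\mu = -\int_\Omega \Psi \, \Delta\eta \, d\mu + \sum_{j=1}^n \Psi|_{C_j}\!\int_{C_j}\!\BFN_j\cdot\nabla\eta \, dS = -\int_\Omega \Psi \, \delta\omega \, d\mu,
\]
because the zero-circulation property of $\eta$ kills every boundary contribution. Thus $\CD E_0(\omega) = -\Psi$.

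Next I would evaluate $J_0(\omega)$ applied to $-\Psi$. The product rule, together with $\Delta\Psi = \omega$, gives
\[
J_0(\omega)\Psi = \nabla\cdot\big((u\cdot\nabla)\nabla\Psi\big) = (u\cdot\nabla)\Delta\Psi + \nabla u : D^2\Psi = (u\cdot\nabla)\omega + \nabla u : D^2\Psi.
\]
The algebraic core of the lemma is the pointwise cancellation $\nabla u : D^2\Psi \equiv 0$ whenever $u = J\nabla\Psi$. Conceptually this is the fact that $\nabla u = J \, D^2\Psi$, so the double contraction is the trace of the antisymmetric matrix $(D^2\Psi) J (D^2\Psi)$. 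A direct substitution of $u_1 = -\p_2\Psi$ and $u_2 = \p_1\Psi$ leads to a four-term expression whose cross terms cancel in pairs. Combining the two steps, $J_0(\omega)\CD E_0(\omega) = -(u\cdot\nabla)\omega = \p_t\omega$, which is the desired Hamiltonian form.

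The delicate point, where I expect most of the care to be needed, is the identification $\CD E_0(\omega) = -\Psi$ rather than $-\Delta_0^{-1}\omega$. If one naively dropped the boundary contribution from $\delta\Psi|_{C_j}$ and read off $-\Delta_0^{-1}\omega$, then applying $J_0(\omega)$ would leave a residual $\sum_j c_j \, \nabla u : D^2 \CH_j$, which does \emph{not} vanish pointwise for harmonic $\CH_j$ distinct from $\Psi$. The correct answer requires precisely the combination of the zero-circulation property of $\eta$ encoded in Remark \ref{R:Delta_0} and the antisymmetric cancellation which only applies to the specific function $\Psi$ generating the velocity; this interplay is the one manipulation on which the Hamiltonian structure hinges.
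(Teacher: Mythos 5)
Your proposal is correct, and both of its key computations check out: the boundary terms in $\delta E_0=\int_\Omega\nabla\Psi\cdot\nabla\eta\,d\mu$ do vanish because $\Psi$ is constant on each $C_j$ while $\eta=\Delta_0^{-1}\delta\omega$ carries zero circulation, giving $\CD E_0(\omega)=-\Psi$; and $\nabla u:D^2\Psi=\mathrm{tr}\big(D^2\Psi\, J\, D^2\Psi\big)=0$ is exactly the pointwise cancellation needed in the second step. The paper reaches the same two milestones by slightly different executions. For $\CD E_0=-\Psi$ it first expands $E_0$ into the explicit quadratic form \eqref{E:energy-V} (splitting $\Psi=\Delta_0^{-1}\omega+\sum_j c_j\CH_j$ and evaluating each cross term) and then differentiates that polynomial; your direct variation of $\tfrac12\int|\nabla\Psi|^2\,d\mu$ is shorter, but note that the lemma's statement explicitly asserts the formula \eqref{E:energy-V} for $E_0$, and that identity is established inside the paper's proof --- your route bypasses it, so a complete write-up should still record that computation (it is reused later, e.g.\ in Subsection \ref{S:PVD} and in \eqref{E:energy-p}). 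For the second step the paper avoids the product-rule expansion altogether via the identity chain $-\nabla\cdot\big((u\cdot\nabla)\nabla\Psi\big)=\nabla\cdot\big(J(u\cdot\nabla)u\big)=-\nabla\times\big((u\cdot\nabla)u\big)=-(u\cdot\nabla)\omega$, i.e.\ it recognizes the expression as the curl of the Euler nonlinearity; your trace argument is more elementary and makes the antisymmetric cancellation explicit, while the paper's version explains conceptually why the vorticity equation must emerge. Your closing remark about why $-\Delta_0^{-1}\omega$ alone would be the wrong functional derivative (the residual $\sum_j c_j\,\nabla u:D^2\CH_j$ does not vanish pointwise) is a correct and worthwhile observation that the paper does not spell out.
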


From \eqref{E:energy-V} and Lemma \ref{L:Stream-F}, $E_0(\omega)$ is a quadratic polynomial of $\omega \in H^s$, $s> -\frac 12$, or $s\ge -1$ if $n=0$. Moreover, $J_0(\omega)$ is skew-symmetric, namely, for smooth $\omega$, $f_1$, and $f_2$, 
\[
\langle f_1, J_0(\omega) f_2\rangle + \langle f_2, J_0(\omega) f_1\rangle =0.
\]

\begin{proof}
Let $\Psi_0 = \Delta_0^{-1} \omega$ and $\Psi_1= \Psi - \Psi_0$. One the one hand, for any $\phi$ satisfying $\phi|_{C_j} =const$ for $0 \leq j \leq n$, from Remark \ref{R:Delta_0} one can compute 
\[
\int_{\Omega} \nabla \Psi_0 \cdot \nabla \phi d\mu = \int_{\p \Omega} \phi \BFN \cdot \nabla \Psi_0  dS - \int_{\Omega}\phi \Delta \Psi_0 d\mu = - \int_{\Omega}\phi \omega d\mu,
\]
which implies 
\[
\int_{\Omega} | \nabla \Psi_0|^2 d\mu = - \int_{\Omega}\Psi_0 \omega d\mu = - \int_{\Omega}\omega \Delta_0^{-1} \omega d\mu
\]
and 
\[
\int_{\Omega} \nabla \Psi_0 \cdot \nabla \Psi_1 d\mu = - \int_{\Omega} \omega \Psi_1 d\mu = - \sum_{j=1}^n c_j \int_\Omega \CH_j \omega d\mu.
\]
On the other hand, from \eqref{E:DN-1} and \eqref{E:Circ-coe}, 
\[
\int_{\Omega} | \nabla \Psi_1|^2 d\mu = \sum_{j, k=1}^n c_j c_k \CN_{jk} = \vec{\CC}^T \CN^{-1} \vec{\CC}.
\]
Therefore we obtain 
\be \label{E:energy-V}
E_0 (\omega)  =  - \frac 12 \int_{\Omega}\omega \Delta_0^{-1} \omega d\mu - \sum_{j=1}^n c_j \int_\Omega \CH_j \omega d\mu 
+  \frac 12  \vec{\CC}^T \CN^{-1} \vec{\CC}
\ee
and 
\[
\CD E_0 (\omega) = -\Delta_0^{-1} \omega - \sum_{j=1}^n c_j \CH_j = -\Psi. 
\]
Finally 
\begin{align*} 
J_0(\omega) \CD E_0(\omega) = - \nabla \cdot \big( (u \cdot \nabla) \nabla \Psi \big) =  \nabla \cdot \big( J (u \cdot \nabla) u \big) = - \nabla \times \big( (u \cdot \nabla) u \big) = -  (u \cdot \nabla) \omega.
\end{align*} 
The proof of the lemma is complete. 
\end{proof}

The operator $J_0(\omega)$ may not be invertible. For example, if $\omega$ is radial on $\Omega= B_1$, then $u$ is angular and $J_0(\omega) f =0$ for any radial $f$. Therefore this $J_0(\omega)$ does not immediately induces a symplectic inner product, which usually is associated with  $J_0(\omega)^{-1}$. For more results on the Hamiltonian structures of the Euler equation in Eulerian coordinates, see, for example, \cite{Ol82}.

\subsection{Concentrated vorticies and point vortex dynamics} \label{S:PVD}

Fix $(\mu_1, \ldots, \mu_N) \in (\BFR\backslash \{0\})^N$ and consider a family of concentrated vorticities $\omega = \Sigma_{j=1}^N \omega_j$, parametrized by $\vec{r} = (r_1, \ldots, r_N)^T \in (\BFR^+)^N$, satisfying \eqref{E:localized-V}, \eqref{E:localization},  and 
\be \label{E:t-omega}
\int_{\BFR^2} \wt \omega d\mu =1, \qquad \omega_j( x) -  r_j^{-2}\mu_j \wt \omega_j\big( r_j^{-1} (x-x_j)\big) \to 0, \; \text{ as } |\vec{r}|\to 0,
\ee 
for some $\wt \omega_j(x)$ in some appropriate sense. From \eqref{E:energy-V}, \eqref{E:GF}, \eqref{E:Gc}, and Lemma \ref{L:Stream-F}, the energy takes the form 
\begin{align*}
& E_0(\omega) - \frac 12  \vec{\CC}^T \CN^{-1} \vec{\CC} + \sum_{j=1}^n c_j \int_\Omega \CH_j \omega d\mu = \frac 12 \int_{\Omega \times \Omega} \BFG_0( x, y) \omega (x)  \omega(y) d\mu_{x,y}\\
=&-\frac 1{4\pi} \sum_{j=1}^N \int_{\Omega_j \times \Omega_j} \log|x-y| \omega_j (x) \omega_j(y) d\mu_{x,y} + 
\frac 12 \sum_{ 1\leq j \ne k \leq N} \int_{\Omega_j \times \Omega_k} \BFG_0(x, y)  \omega_j (x) \omega_k(y) d\mu_{x,y} \\
&+\frac 12 \sum_{j=1}^N \int_{\Omega_j \times \Omega_j}  \omega_j (x) \omega_j(y) \big(\tilde g(x,y) + \sum_{l, m=1}^n \CN^{l, m} \CH_l(x) \CH_m(y) \big) d\mu_{x,y}.
\end{align*}
As $|\vec{r}| \to 0$, each $\omega_j$ converges to the delta mass $\mu_j \delta_{x_j} (x)$ at $x_j$ and thus 
\be \label{E:E0}
E_0(\omega) - \frac 12  \vec{\CC}^T \CN^{-1} \vec{\CC} - \sum_{j=1}^N   E_{\BFR^2} (\omega_j)  \longrightarrow H_{\vec{\CC}} (X), \quad X = (x_1, \ldots, x_N)^T,
\ee
where 
\begin{align*}
E_{\BFR^2} (\omega) =&- \frac 1{4\pi} \int_{\BFR^2 \times \BFR^2} \log|x-y| \omega(x)\omega(y) d\mu_{x, y} =  E_{\BFR^2} (\wt \omega_j) - \frac 1{4\pi} \mu_j^2  \log r_j + o(1),  \\
H_{\vec{\CC}} (X) = &- \sum_{l=1}^n \sum_{j=1}^N c_l \mu_j \CH_l (x_j)  + \sum_{j=1}^N \mu_j^2 g(x_j)  + \frac 12 \sum_{ 1\leq j \ne k \leq N}  \mu_j \mu_k\BFG_0(x_j, x_k) \\
g(x) = & \frac 12\big(\tilde g(x,x) + \sum_{l, m=1}^n \CN^{l, m} \CH_l(x) \CH_m(x) \big). 
\end{align*}
Clearly $E_{\BFR^2}$ is the energy functional of the 2-dim Euler equation on $\BFR^2$ which is invariant under spatial translation and rescaling. \\

\noindent
{\it Heuristic implication on the steady problem.} Intuitively, in the limit as $|\vec{r}| \to 0+$, the profiles $\wt \omega_j$ of the vorticity and the locations $x_j$ are decoupled. In particular each $\wt \omega_j$ is governed by the energy $E_{\BFR^2}$ of the Euler equation on $\BFR^2$, while $X = (x_1, \ldots, x_N)\in \BFR^{2N}$ is by $H_{\vec{\CC}}  (X)$.  As steady states of the Euler equation corresponding to critical points of $E_0$, those with $|\vec{r}|<<1$ must satisfy $\CD E_{\BFR^2}(\wt \omega_j) \approx 0$, $1\leq j \leq N$, and $\nabla H_{\vec{\CC}} (X) \approx 0$. This leads to the perturbation type existence results in the main theorems. In particular $\wt \omega_j$ are roughly taken as stable radial distributions on disks $\Omega_j$. \\ 

\noindent
{\it Heuristic implication on the dynamic problem.} Even though we mainly focus on stationary solutions and some stability properties, it is worth pointing out a heuristic observation on the Euler dynamics for concentrated vorticities. On the one hand, for $|\vec{r}| <<1$, the vorticities $\omega_j$ barely see each other or the boundary $\p \Omega$ in the leading order. Therefore the leading order dynamics of $\omega_j$ should be the Euler equation on $\BFR^2$ (also implied by the energy $E_{\BFR^2}(\omega_j)$ in the above \eqref{E:E0}) at the speed of $O(r_j^{-2})$. This scale of evolution is due to the spatial scale of $O(r_j)$ and the scaling invariance of the Euler equation on $\BFR^2$: if $\omega(t, x)$ is a solution, so is $\ep^{-2} \omega (\ep^{-2}t, \ep^{-1} x)$. On the other hand, it has been rigorously proved \cite{MP83, Tu87, MP93} that the dynamics of the vortex locations $X = (x_1, \ldots, x_N)$ is governed by the Hamiltonian ODE system $ \p_t X = \Lambda^{-1} J_N \nabla H_{\vec{\CC}}(X)$ with the Hamiltonian $H_{\vec{\CC}} $ and the symplectic operator $\Lambda^{-1} J_N$ where 
\be \label{E:symplectic-1}
\Lambda_{2N \times 2N} = diag (\mu_1 I_{2\times2}, \ldots, \mu_N I_{2\times2}) \quad \text{ and } \quad (J_N)_{2N \times 2N} =  diag(J, \ldots, J).
\ee
Therefore the dynamics of concentrated vorticities is a PDE singular perturbation problem with two time scales of order $O(1)$ and $O(|\vec{r}|^{-2})$ and spatial scales $O(1)$ and $O(|\vec{r}|)$, which is in our long term plan. 

To end this subsection, we give the following statement on steady states. 

\begin{lemma} \label{L:steady-C0}
Suppose $\omega = \Sigma_{j=1}^N \omega_j \in C^0(\overline {\Omega})$ satisfies that 
\begin{enumerate}
\item $\Omega_j =$supp$(\omega_j)$, $j=1, \ldots, N$, are mutually disjoint and have $H^s$ boundary, $s>\frac 32$, 
\item for each $j$, $\Delta \Psi = f_j(\Psi)$ on $\Omega_j$ for some $f_j \in C^1$, where $\Psi$ is  in Lemma \ref{L:Stream-F}, 
\end{enumerate}
then $\omega$ is a steady state of the Euler equation.
\end{lemma}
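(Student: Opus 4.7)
The plan is to verify the steady vorticity equation $u\cdot\nabla\omega = 0$, or equivalently (since $\nabla\cdot u=0$) the divergence form $\nabla\cdot(\omega u)=0$, in the sense of distributions on $\Omega$. Testing against an arbitrary $\phi\in C_c^\infty(\Omega)$ and using that $\omega\equiv 0$ outside $\bigcup_j\overline\Omega_j$, matters reduce to showing
\begin{equation*}
\sum_{j=1}^N \int_{\Omega_j}\omega\,u\cdot\nabla\phi\,d\mu = 0.
\end{equation*}

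On each $\Omega_j$ the hypothesis $\omega = \Delta\Psi = f_j(\Psi)$ is the key ingredient. Picking a $C^2$ antiderivative $F_j$ with $F_j'=f_j$, one rewrites $\omega u = f_j(\Psi)\,J\nabla\Psi = J\nabla F_j(\Psi)$, and the algebraic identity $\nabla\cdot(J\nabla g)\equiv 0$ gives $\nabla\cdot(\omega u)=0$ pointwise on $\Omega_j$. Because $\partial\Omega_j\in H^s$ with $s>\frac{3}{2}$ and $f_j\in C^1$, standard elliptic regularity for $\Delta\Psi=f_j(\Psi)$ lifts $\Psi$ to enough smoothness up to $\partial\Omega_j$ to justify the divergence theorem, which yields
\begin{equation*}
\int_{\Omega_j}\omega u\cdot\nabla\phi\,d\mu \;=\; \int_{\partial\Omega_j}\phi\,(\omega u)\cdot N_j\,dS.
\end{equation*}

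It remains to show the boundary integral vanishes. Since $\omega\in C^0(\overline\Omega)$ and $\omega=0$ on the complement of $\bigcup_j\overline\Omega_j$, continuity forces $\omega|_{\partial\Omega_j}=0$, hence $(\omega u)\cdot N_j=\omega(u\cdot N_j)=0$ on $\partial\Omega_j$, and the boundary term drops out. Summing over $j$ delivers the distributional identity, and the piecewise $H^s$-regularity of $\omega$ then upgrades this to a classical steady state within each $\Omega_j$ with continuous matching across the interfaces.

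The main obstacle I anticipate is precisely the correct handling of this boundary term: using the factorization $\omega u = J\nabla F_j(\Psi)$ directly inside the boundary integrand would leave a tangential derivative $\partial_{T_j}F_j(\Psi)$, which in general does not vanish since $\Psi$ is not assumed constant along $\partial\Omega_j$. The resolution is to retain the product form $\omega u$ at the boundary and exploit continuity of $\omega$ across $\partial\Omega_j$. This is also the structural reason why, unlike in the vortex patch case, the continuous setting does not require $\partial\Omega_j$ to be a streamline.
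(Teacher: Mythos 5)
Your proof is correct and follows essentially the same route as the paper: reduce to the distributional identity $\nabla\cdot(\omega u)=0$, verify it pointwise inside each $\Omega_j$ from $\omega=f_j(\Psi)$ (your antiderivative $F_j$ is just a repackaging of the paper's $u\cdot\nabla\omega=J\nabla\Psi\cdot f_j'(\Psi)\nabla\Psi=0$), and integrate by parts over each $\Omega_j$. Your explicit treatment of the boundary term, killed by $\omega|_{\p\Omega_j}=0$ from continuity of $\omega$ on $\overline\Omega$, is a step the paper's own proof leaves implicit, so this is a slightly more careful write-up of the same argument.
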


Since $\omega$ may not be in $C^1(\Omega)$, here it is a steady state of \eqref{E:Euler-V} in the weak sense, while the corresponding velocity $u = J\nabla \Psi$ being steady of \eqref{E:Euler} in the classical sense. 

\begin{proof}
In $\Omega \backslash (\cup_{j=1}^N \p \Omega_j)$, either $\omega \equiv 0$ or $\omega = \Delta \Psi = f_j (\Psi)$, so the steady equation $u \cdot \nabla \omega = J \nabla \Psi \cdot \nabla \omega =0$ of \eqref{E:Euler-V} is apparently satisfied. In general, for any $\phi \in C_0^\infty (\Omega)$, we have  
\begin{align*}
& \int_\Omega \omega u \cdot \nabla \phi d\mu = \sum_{j=1}^N \int_{\Omega_j} \omega u \cdot \nabla \phi d\mu = - \sum_{j=1}^N \int_{\Omega_j} (u \cdot \nabla \omega + \omega \nabla \cdot u) \phi d\mu =0. 
\end{align*}
It implies $\nabla \cdot (\omega u) =0$ in the weak sense, which implies lemma due to $\nabla \cdot u=0$. 
\end{proof}

\subsection{Conformal mapping parametrizations} \label{SS:CM}

As the relationship between the vorticity and stream function is given by a Poisson equation, conformal mappings provide an ideal way to parametrize the unknown vortical domains $\Omega_j$ of the steady concentrated vorticities. However, since the group of conformal mappings on $B_1$ is 3-dim, given any simply connected domain, its conform mappings parametrization by $B_1$ involve 3 degree of freedom. We shall use the following lemma to fix these freedoms which also eliminates certain degeneracy in the analysis. 

\begin{lemma} \label{L:CM}
Let $k\ge 2$ be an integer and $f\in C^k(\overline{B_1}, \BFC)$  be a conformal mapping. 
Then there exist $\alpha \in S^1$ and $c \in \BFC \cap B_1$ such that $\p_z f_1(c, \alpha, 0)>0$ and $\p_z^k f_1(c, \alpha, 0) =0$, where $f_1 (c, \alpha, z) = f \big(\alpha \frac {z+c}{1+\bar c z} \big)$. 
Moreover, if 
a conformal mapping $f_0 \in C^1 (\overline{B_1})$ satisfies 
\be \label{E:CM-A}
\p_z^k f_0 (0)=0, \quad |\p_z^{k+1} f_0 ( 0)|  \ne k(k-1)|\p_z^{k-1} f_0 ( 0)|
\ee
then such $(c, \alpha)$ are unique and $C^1$ with respect to $f$ near $f_0$ in $C^1 (\overline{B_1})$. 
\end{lemma}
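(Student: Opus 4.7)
The strategy is the Implicit Function Theorem. The M\"obius group of $B_1$ is real $3$-dimensional (one phase $\alpha\in S^1$ and one complex point $c\in B_1$), matching exactly the three real scalar equations hidden in the statement: $\mathrm{Im}\,\partial_z f_1(0)=0$ (one real, with positivity an open side condition) and $\partial_z^k f_1(0)=0$ (one complex, hence two real). I would take $(c,\alpha)=(0,\alpha_0)$ as the base point for $f=f_0$, where $\alpha_0\in S^1$ is the unique phase making $\alpha_0 f_0'(0)>0$. Since $c=0$ reduces the M\"obius map to the rotation $\phi(z)=\alpha_0 z$, the hypothesis $\partial_z^k f_0(0)=0$ gives $\partial_z^k f_1(0)|_{c=0}=\alpha_0^k f_0^{(k)}(0)=0$, so both normalizations already hold at the base point. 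The $C^1$-dependence and local uniqueness asserted in the ``moreover'' part then follow once the $3\times 3$ Jacobian is shown to be invertible.

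For the Jacobian computation, write $\alpha=\alpha_0 e^{i\theta}$. From $\phi(z)=\alpha(z+c)/(1+\bar c z)$ the Wirtinger derivatives at $c=0$ are $\partial_c\phi=\alpha$ and $\partial_{\bar c}\phi=-\alpha z^2$, whence the chain rule together with Leibniz (in the second, only the $(z^2)''$ contribution at $z=0$ survives) gives
\[
\partial_c\bigl(\partial_z^k f_1(0)\bigr)\big|_0=\alpha_0^{k+1}f_0^{(k+1)}(0),\qquad \partial_{\bar c}\bigl(\partial_z^k f_1(0)\bigr)\big|_0=-k(k-1)\alpha_0^{k-1}f_0^{(k-1)}(0),
\]
while $\partial_\theta\,\mathrm{Im}\,\partial_z f_1(0)|_0=\alpha_0 f_0'(0)>0$ and $\partial_\theta\,\partial_z^k f_1(0)|_0=ik\alpha_0^k f_0^{(k)}(0)=0$. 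Expanding the $3\times 3$ Jacobian determinant along the $\theta$-column exploits this last vanishing and reduces it to $\alpha_0 f_0'(0)$ times the real Jacobian of the complex map $G(c)=\partial_z^k f_1(c,\alpha_0,0)$; using the identity that the real Jacobian of a smooth complex-valued map on $\BFC$ equals $|\partial_c G|^2-|\partial_{\bar c} G|^2$ and substituting the formulas above, the full determinant equals $\alpha_0 f_0'(0)\bigl(|f_0^{(k+1)}(0)|^2-k^2(k-1)^2|f_0^{(k-1)}(0)|^2\bigr)$, which is nonzero precisely by the non-degeneracy hypothesis. The standard Banach-space IFT, applied to the map $(c,\alpha,f)\mapsto(\mathrm{Im}\,\partial_z f_1(0),\partial_z^k f_1(0))$, which is smooth in $(c,\alpha)$ and $C^1$ in $f\in C^1(\overline{B_1})$, then delivers the unique local solution $(c(f),\alpha(f))$ with the claimed regularity.

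The remaining task, \emph{global} existence of $(c,\alpha)$ for an arbitrary conformal $f$ (without non-degeneracy assumed on $f$), is the main obstacle. The natural reduction is to define $T:B_1\to\BFC$ by $T(c):=\partial_z^k f_1(c,\alpha(c),0)$, where $\alpha(c)\in S^1$ is the unique phase making $\partial_z f_1(c,\alpha,0)$ positive real (well-defined because conformality forces $f'\neq 0$ on $\overline{B_1}$), and to prove that $T$ has a zero in $B_1$. For $k=2$ a direct calculation reduces this to the fixed-point equation $\bar c=(1-|c|^2)f''(\alpha c)/\bigl(2f'(\alpha c)\bigr)$, whose right-hand side vanishes on $\partial B_1$ and is amenable to a Brouwer-type argument on $\overline{B_1}$. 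For general $k$ a homotopy continuation appears most promising: deform $f$ through conformal maps to a reference map for which existence is known via the IFT part, and extend the IFT-solution along the path; the delicate step is ruling out escape of $(c,\alpha)$ to $\partial(B_1\times S^1)$, which requires quantitative control of the Fa\`a di Bruno expansion of $T$ near the boundary.
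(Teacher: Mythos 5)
Your Implicit Function Theorem half is correct and is essentially the paper's argument: the linearization of $c\mapsto \p_z^k f_1(c,\alpha_0,0)$ at $c=0$ is the $\BFR$-linear map $v\mapsto \p_z^{k+1}f_0(0)\,v-k(k-1)\p_z^{k-1}f_0(0)\,\bar v$, whose real determinant $|\p_z^{k+1}f_0(0)|^2-k^2(k-1)^2|\p_z^{k-1}f_0(0)|^2$ is nonzero exactly under \eqref{E:CM-A}; your block-triangular expansion along the $\theta$-column (using $\p_z^kf_0(0)=0$) correctly decouples the $\alpha$ degree of freedom, which the paper simply suppresses ``for simplicity of notations.''

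The genuine gap is the existence statement for an arbitrary conformal $f$, which you explicitly leave open: your homotopy continuation stalls at exactly the point that needs proof (no escape of $c$ to $\p B_1$), and the $k=2$ fixed-point remark does not extend to general $k$. The paper closes this with a degree computation that supplies precisely the quantitative boundary control you identify as missing. Writing $\p_z^k f_1(c,0)=\frac{k!}{2\pi i}\int_{S^1}z^{-k-1}f\big(\frac{z+c}{1+\bar cz}\big)\,dz$ and changing variables $e^{i\beta}=\frac{e^{i\theta}+(1-\delta)e^{i\theta_0}}{1+(1-\delta)e^{i(\theta-\theta_0)}}$ gives, for $c=(1-\delta)e^{i\theta_0}$,
\[
\p_z^k f_1(c,0)=2\delta(-1)^{k-1}k!\,e^{-i(k-1)\theta_0}\,\p_zf(e^{i\theta_0})+O(\delta^2),\qquad \delta\to 0^+.
\]
Since $\p_zf\neq 0$ on $\overline{B_1}$, the factor $\p_zf(e^{i\theta_0})$ contributes zero winding and $e^{-i(k-1)\theta_0}$ contributes $-(k-1)$, so $c\mapsto\p_z^kf_1(c,0)$ has degree $1-k\neq 0$ on circles $|c|=1-\delta$ and must vanish in $B_1$. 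The essential point you are missing is that the leading boundary behavior is governed only by $\p_z f$ on $S^1$, not by higher derivatives, which is what makes the degree computable uniformly in $k\ge 2$; your $k=2$ equation $\bar c=\alpha(1-|c|^2)f''(\alpha c)/\big(2f'(\alpha c)\big)$ is the one case where this cancellation is visible by hand (the right-hand side vanishes on $\p B_1$, giving degree $-1$ for the difference), but without the integral representation the analogous statement for $k\ge 3$ is not accessible from the Fa\`a di Bruno expansion alone.
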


\begin{proof}
It is easy to make $\p_z f_1(c, \alpha, 0)>0$ by adjusting $\alpha$ and we focus on the property $\p_z^k f_1 (c, \alpha, 0)=0$ by adjusting $c\in B_1$, while we neglect the parameter $\alpha$ for simplicity of notations. One may compute 
\[
\partial_{z}^k f_1 (c, 0) =  \frac {k!}{2\pi i}\int_{S^1} z^{-k-1} f\Big(\frac {z+c}{1+\bar c z}   \Big) dz.
\] 
Let $c= (1-\delta) e^{i\theta_0}$, $\delta>0$, and parametrize $z= e^{i\theta}$ on $S^1$. We have 
\[
\partial_{z}^k f_1\big((1-\delta) e^{i\theta_0}, 0\big) = \frac {k!}{2\pi } \int_{-\pi}^\pi e^{-ik\theta} f\Big(\frac {e^{i\theta} + (1-\delta) e^{i\theta_0}}{1+ (1-\delta) e^{i(\theta -\theta_0)}}  \Big) d\theta.
\]
Let 
\[
e^{i\beta} =  \frac {e^{i\theta} + (1-\delta) e^{i\theta_0}}{1+ (1-\delta) e^{i(\theta -\theta_0)}}
\]
which is a 1-1 correspondence and 
\[
e^{i \theta} = \frac {e^{i\beta} - (1-\delta) e^{i\theta_0}}{1-(1-\delta) e^{i(\beta -\theta_0)}}, \qquad 
e^{i\theta} d \theta = \frac { \delta(2-\delta) e^{i\beta} }{(1-(1-\delta) e^{i(\beta -\theta_0)})^2} d\beta. 
\]
Therefore, 
\begin{align*}
& \partial_{z}^k f_1\big((1-\delta) e^{i\theta_0}, 0\big)  =\delta(2-\delta)\frac {k!}{2\pi } \int_{-\pi}^\pi \frac {(1-(1-\delta) e^{i(\beta -\theta_0)})^{k-1}}{(e^{i\beta} - (1-\delta) e^{i\theta_0})^{k+1}} f(e^{i\beta}) e^{i\beta} d\beta \\
=&  \delta(2-\delta) \partial_{z}^k \Big( \big(1-(1-\delta) e^{-i\theta_0} z\big)^{k-1} f(z)\Big)|_{z= (1-\delta) e^{i\theta_0} }\\
=&  \delta(2-\delta) \Sigma_{l=1}^k \frac {k! (k-1)!} {l! (l-1)! (k-l)!} \big(-(1-\delta) e^{-i\theta_0}\big)^{k-l} \Big((1-(1-\delta) e^{-i\theta_0}z)^{l-1} \p_z^l f (z)\Big)|_{z= (1-\delta) e^{i\theta_0}}\\
=& 2\delta (-1)^{k-1} k! e^{-i(k-1)\theta_0} \p_z f (e^{i\theta_0}) + O (\delta^2), \quad \text{ as } \delta \to 0+.
\end{align*}
Since $\p_z f(z) \ne 0$ on $B_1$ and thus $\p_z f(e^{i\theta_0})$, $\theta_0 \in [0, 2\pi]$, is deformable to $1$, we obtain that, as a function of $c$, the degree of $ \partial_{z}^k f_1 (c, 0)  $ is $1-k$. So it achieves zero at some $c \in B_1$. 

When $f_0 \in C^1(\overline{B_1})$, clearly $\partial_{z}^k f_1 (c, 0) $ is a $C^1$ mapping from $B_1 \times C^1(\overline{B_1})$ to $\BFR^2$. Recall $\p_z^k f_0 (0) =0$. Differentiating $\partial_{z}^k f_1 (c, 0) $ at $c=0$ and $f=f_0$ we have, for any $v \in \BFR^2 \sim \BFC$, 
\[
D_c \big(\partial_{z}^k f_1 (c, 0)\big)|_{c=0} v = \frac {k!}{2\pi i} \int_{S^1} z^{-k-1} \p_z f_0 (z) (v - z^2 \bar v)  dz=\p_z^{k+1} f_0 ( 0) v - k(k-1)\p_z^{k-1} f_0 ( 0) \bar v. 
\]
The local uniqueness of $c$ and its smooth dependence on $f \in C^1(\overline{B_1})$ follow immediately from the assumption \eqref{E:CM-A} and the Implicit Function Theorem.
\end{proof}

For this paper the most relevant part of the lemma is the second half with $f_0(z)=z$ and $k=2$, which means that any domain close to a disk can be uniquely represented by a conformal mapping $f$ with $\p_z^2 f (0)=0$ and $\p_z f(0)>0$. 

For $j=1, \ldots, N$, by Lemma \ref{L:CM}, each vortical domain $\Omega_j$ whose shape is close to $r_j  B_1$ is the image of a unique conformal mapping 
\be \label{E:CM-2-1}
\Gamma_j (z) = x_j + r_j a_1^j \big(z+ \wt \Gamma_j (z) \big), \quad \wt \Gamma_j (z) = \sum_{m=3}^\infty (a_m^j + i b_m^j) z^m.
\ee
Since these conformal mappings are harmonic in $B_1$ and thus can be determined by its boundary values, actually by only the real parts of its boundary values as the imaginary parts are Hilbert transforms of the real parts. Let 
\be \label{E:CM-2-2}
\beta_j (\theta) = \text{Re} \wt \Gamma_j (e^{i\theta}) = \sum_{m=3}^\infty (a_m^j \cos m\theta - b_m^j \sin m \theta). 
\ee
Apparently, 
for $s>0$, it holds that the above defined correspondence between $\wt \Gamma_j$ and $\beta_j$ is an isomorphism between 
\be \label{E:CM-2-3}
\{\text{holomorphic } f\in H^{s+\frac 12}(B_1, \BFC) \mid \p_z^m f(0) =0, \; m=0, 1,2\} \; \text{ and } \; X^s
\ee
where 
\be \label{E:Xs}
X^s \triangleq \{\beta = \sum_{m=3}^\infty (a_m \cos m\theta - b_m \sin m \theta) \in H^s(S^1, \BFR)
\}.
\ee
For $s> \frac 32$, let 
\be \label{E:Rs}
R_s = (3^{2-2s} + 4^{2-2s} + \ldots )^{-\frac 12},
\ee
then a simple argument based on Cauchy-Schwartz inequality implies 
\[
|\p_z \wt \Gamma_j|_{C^0(B_1)} \le (\sqrt{\pi} R_s)^{-1} |\beta_j|_{H^s (S^1)}. 
\]
Therefore if $|\beta_j|_{H^s} < R_s$, $\Gamma_j$ defined in \eqref{E:CM-2-1} and  \eqref{E:CM-2-2} is a conformal mapping. 
In order to satisfy the area requirement on $\Omega_j$ in \eqref{E:localization}, $a_1^j$ is determined by $\wt \Gamma_j$ as 
\be \label{E:a1j}
a_1^j = \big(\frac 1\pi \int_{B_1} |1 + \p_z \wt \Gamma_j|^2 d\mu\big)^{-\frac 12} = \sqrt{\pi}\big( \int_{B_1} 1 + |\p_z \wt \Gamma_j|^2 d\mu\big)^{-\frac 12},
\ee
where we also used $\int_{B_1} \p_z \wt \Gamma_j d\mu =0$. 

According to these observations and Lemma \ref{L:CM},  each $\Omega_j$ whose shape is close to $r_j B_1$ and with $H^s$ boundary is uniquely parametrized by $x_j \in \Omega$ and $\beta_j \in X^s$. Therefore collections of vortical domains $\{ \Omega_j\mid j=1, \ldots, N\}$ are identified with elements in 
\be \label{E:sigma-rho}
\Sigma_\rho \times (X^s)^N, \; \;
\Sigma_\rho \triangleq \{ X =(x_1, \ldots, x_N) \in \Omega^{2N} \mid dist(x_j, \p \Omega), \, |x_j - x_{j'}| > \rho, \, \forall j \ne j' \}
\ee
where 
$\rho> 0$ is fixed, $s>\frac 32$, $|\beta_j|_{X^s} <R_s$, and $|\vec{r}|<<\rho$.   In particular, this set also becomes an open subset of the phase space of the vortex patch dynamics. 

\subsection{Vortex patches dynamics in conformal mapping parametrization and its Hamiltonian structure} 
\label{SS:Ham-VP}

In this subsection we consider the vortex patch dynamics, where 
\be \label{E:VP-omega} 
\omega=\sum_{j=1}^N \omega_j, \quad \omega_j = \frac {\mu_j}{\pi r_j^2} \chi_{\Omega_j}, \quad |\Omega_j| = \pi r_j^2, \qquad j =1, \ldots, N,
\ee
and put it in a slightly different Hamiltonian framework. Due to the volume preserving transport of the vorticity, it is well-known that an initial collection of disjoint vortex parches $\omega = \sum_{j=1}^N \omega_j$ with reasonably smooth $\p \Omega_j$ evolves as a collection of disjoint vortex patches with $\omega (t)$ in the same form \eqref{E:VP-omega} and the same areas $|\Omega_j (t)| = \pi r_j^2$, while $\mu_j$ and $r_j$ are independent of $t$. Therefore in the vortex patch dynamics of such a $\omega(t)$, which is equivalent to a collection $\big(\p \Omega_1 (t), \ldots, \p \Omega_N(t)\big)$ of boundary curves of vortical domains, the phase space is equivalent to the $\infty$-dim `manifold'  
\[
\CM= \{(\p \Omega_1, \ldots, \p \Omega_N) \mid \Omega_j \subset \subset \Omega, \; j=1, \ldots, N, \text{ disjoint } \& \text{ `smooth', } |\Omega_j| = \pi r_j^2 \}
\]
consisting of the collections of the boundaries $\p \Omega_j$ of the vortex patches. In the dynamics of the Euler equation, given such a collection of vortex patches, the normal velocity along each $\p \Omega_j$ is given by 
\be \label{E:VP-NV}
u \cdot N_j = J \nabla \Psi \cdot N_j = - \nabla_{T_j} \Psi 
\ee
where $\Psi$ is determined by $\omega$ and Lemma \ref{L:Stream-F} and $T_j = JN_j$ is the counterclockwise unit tangent vector of $\p \Omega_j$ and thus $\nabla_{T_j}$ the derivative with respect to the arc length along $\p \Omega_j$ counterclockwisely. Therefore we have 

\begin{lemma} \label{L:steady-VP} 
A vortex patch in the form of \eqref{E:VP-omega} satisfying \eqref{E:localized-V} \eqref{E:localization} with $\p \Omega_j \in C^1$, $j=1, \ldots, N$, is steady if and only if $\Psi|_{\p \Omega_j} =const$ for all $j$.  
\end{lemma}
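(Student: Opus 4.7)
The plan is to prove both directions by converting the steadiness condition on the piecewise constant vorticity into a condition on the normal velocity of the vortical boundaries, and then invoking \eqref{E:VP-NV} together with the topological fact that each $\p\Omega_j$ is connected.

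First I would observe that, since $\omega$ in \eqref{E:VP-omega} is a piecewise constant function of the form $\sum_j \frac{\mu_j}{\pi r_j^2}\chi_{\Omega_j}$ and since $\nabla\cdot u = 0$, the steady vorticity equation $u\cdot\nabla\omega = 0$ is to be interpreted as $\nabla\cdot(\omega u) = 0$ in the sense of distributions. Testing against an arbitrary $\phi\in C_0^\infty(\Omega)$ and integrating by parts on each $\Omega_j$, one gets
\[
\langle \nabla\cdot(\omega u),\phi\rangle = -\sum_{j=1}^N \frac{\mu_j}{\pi r_j^2}\int_{\p\Omega_j} (u\cdot N_j)\,\phi\,dS.
\]
Because the boundaries $\p\Omega_j$ are mutually disjoint and $\mu_j\ne 0$, the vanishing of this distribution is equivalent to $u\cdot N_j = 0$ pointwise on each $\p\Omega_j$ (using that $\p\Omega_j\in C^1$ and $u$ is continuous up to the boundary by the regularity of $\Psi$ from Lemma \ref{L:Stream-F}).

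Next I would apply \eqref{E:VP-NV} to convert this condition into a statement about the stream function: $u\cdot N_j = -\nabla_{T_j}\Psi = 0$ on $\p\Omega_j$. Since $\p\Omega_j$ is the boundary of a domain diffeomorphic to a disk (by \eqref{E:localized-V}), it is a single connected simple closed $C^1$ curve, and $T_j$ is its unit tangent. Vanishing of the tangential derivative along a connected curve is equivalent to $\Psi$ being constant on that curve. This gives the forward implication directly, and it also yields the backward implication since the chain of equivalences runs both ways: $\Psi|_{\p\Omega_j}=\text{const}$ forces $u\cdot N_j = 0$, which forces $\nabla\cdot(\omega u) = 0$ distributionally, i.e., $\omega$ is a weak steady solution of \eqref{E:Euler-V}.

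I do not expect any serious obstacle here. The only mild subtlety is the weak/distributional formulation: one must check that $u = J\nabla\Psi$ is sufficiently regular near $\p\Omega_j$ for the boundary integral $\int_{\p\Omega_j}(u\cdot N_j)\phi\,dS$ to make sense and for the integration by parts on $\Omega_j$ to be justified. This follows from Lemma \ref{L:Stream-F} applied to $\omega\in L^\infty\subset H^{-1/2+\epsilon}$, which places $u$ in $H^{3/2-\epsilon}$, together with the $C^1$ regularity of $\p\Omega_j$ that makes the trace well-defined. Once this routine verification is in place, the argument reduces to the elementary fact about tangential derivatives on connected curves.
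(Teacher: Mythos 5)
Your proof is correct and follows essentially the same route as the paper: the paper states this lemma as an immediate consequence of the normal velocity formula \eqref{E:VP-NV}, $u\cdot N_j=-\nabla_{T_j}\Psi$, together with the fact that a function with vanishing tangential derivative on the connected closed curve $\p\Omega_j$ is constant there. The only addition in your write-up is the explicit verification, via the distributional identity $\langle\nabla\cdot(\omega u),\phi\rangle=-\sum_j\frac{\mu_j}{\pi r_j^2}\int_{\p\Omega_j}(u\cdot N_j)\phi\,dS$, that steadiness of the weak vorticity equation is equivalent to $u\cdot N_j=0$ on each boundary; this is the same integration-by-parts computation the paper carries out for the continuous case in Lemma \ref{L:steady-C0}, and it is a welcome clarification rather than a divergence.
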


As in Subsection \ref{SS:CM}, we parametrize (an open subset of) $\CM$ by $\Sigma_\rho \times (X^s)^N$, $s>\frac 32$. In Section \ref{S:VP}, we will construct steady concentrated vortex patches near non-degenerate critical points of $H_{\vec{\CC}} (X)$ based on the above lemma. 

In order to study the spectral properties of the linearized vortex patch dynamics at steady states in Section \ref{S:stability}, we first write down the dynamic equation of the vortex patches parametrized by conformal mappings. Let 
\[
\big(X(t), \beta(t)\big) = (x_1, \ldots, x_N, \beta_1, \ldots, \beta_N) (t) \in \Sigma_\rho \times (X^s)^N, 
\] 
along with $\wt \Gamma_j(t)$, $\Gamma_j(t)$, and $\Omega_j(t)$ defined in \eqref{E:CM-2-1} and \eqref{E:CM-2-2}, correspond to a vortex patch solution to the Euler equation. Since $\Gamma_j$ defined on $B_1$ is conformal, it is clear 
\[
N_j \circ \Gamma_j =|\p_z \Gamma_j|^{-1} z \p_z \Gamma_j|_{S^1}, \quad T_j\circ \Gamma_j = |\p_z \Gamma_j|^{-1} i z \p_z \Gamma_j|_{S^1}. 
\]
Since $\p_t \Gamma_j$ yields the same normal velocity along $\p \Omega_j$ as in \eqref{E:VP-NV}, we have, for each $j$, 
\be \label{E:NV-1}
\p_t \Gamma_j \cdot (|\p_z \Gamma_j|^{-1} z \p_z \Gamma_j)  = (u\cdot N_j)\circ \Gamma_j = - ( \nabla_{T_j} \Psi)\circ \Gamma_j = |\p_z \Gamma_j|^{-1} \p_\theta \big(\Psi \circ \Gamma_j(e^{i\theta})\big),
\ee
along $S^1$. Due to the $\p_\theta$ in the above right side, the homogeneous space $\dot H^s (S^1)$ would get involved naturally. 
Differentiating the definition of $\Gamma_j$, we have 
\be \label{E:NV-2} \begin{split}
&\p_t \Gamma_j = \p_t x_j + r_j a_1^j \p_t \wt \Gamma_j + r_j \p_t a_1^j (z+ \wt \Gamma_j), \\
&\p_t a_1^j = - \sqrt{\pi}  \big(\int_{B_1} 1 + |\p_z \wt \Gamma_j|^2 d\mu\big)^{-\frac 32} \int_{B^1} \p_z \wt \Gamma_j \cdot \p_{tz} \wt \Gamma_j d\mu
\end{split} \ee
where $\p_t \wt \Gamma_j$ is defined by $\p_t \beta_j$ as in \eqref{E:CM-2-2}. 

We shall see that $\p_t X$ and  $\p_t \beta$ can be uniquely recovered from $\p_\theta \big(\Psi \circ \Gamma_j(e^{i\theta})\big)$. Clearly in this process domains $\Omega_j$ do not interact and we only consider single domains.

\begin{lemma} \label{L:Qbdd}
For $r>0$ and $\beta \in B_{X^s, R_s}$, $s>\frac 32$, where $B_{X^s, R_s}$ is the ball of radius $R_s$ in $X^s$ and $R_s$ defined in \eqref{E:Rs}, let holomorphic functions $\wt \Gamma$ and $\Gamma $ (including $a_1$) be defined by $x=0$, $\beta$, and $r$, as in \eqref{E:CM-2-1}, \eqref{E:CM-2-2}, and \eqref{E:a1j}. Then the linear mapping 
\be \label{E:B-1}
(y, \alpha) \in \BFR^2 \times X^{s'} \longrightarrow f= \dot \Gamma \cdot (  z \p_z \Gamma)|_{S^1}  \in \dot H^{s'} (S^1), 
\ee
is an isomorphism for any $s'\in [1-s, s-1]$, where 
\begin{subequations} \label{E:B-2}
\be \label{E:B-2a}
\dot \Gamma  = y  + r a_1 \dot{\wt \Gamma}  + r \dot a_1 (z+ \wt \Gamma )
\ee
\be \label{E:B-2b} \dot a_1 = - \sqrt{\pi}  \big(\int_{B_1} 1 + |\p_z \wt \Gamma |^2 d\mu\big)^{-\frac 32} \int_{B^1} \p_z \wt \Gamma  \cdot \p_{z} \dot {\wt \Gamma}  d\mu
\ee 
\end{subequations}
and the holomorphic function $\dot{\wt \Gamma}$ is determined by $\alpha $ as in \eqref{E:CM-2-2}. 
Moreover, the isomorphism $Q(r, \beta) \in L( \dot H^{s'}(S^1), \BFR^2 \times X^{s'})$ defined by $f \to Q(r, \beta)f= (y, \alpha)$ depends smoothly on $\beta \in B_{X^s, R_s}$ and 
\begin{align*}
&Q(r, \beta) = diag (r^{-1}, r^{-2}) Q(1, \beta), 
\\
& Q(1, 0) \sum_{l=1}^\infty (A_l \cos l \theta + B_l \sin l \theta) = \big((A_1, B_1), \sum_{l=3}^\infty (A_{l-1} \cos l \theta + B_{l-1} \sin l \theta)        \big). 
\end{align*}
\end{lemma}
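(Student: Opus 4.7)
The plan is to first pin down $Q(1, 0)$ by a direct Fourier computation, then extract the $r$-scaling from the homogeneity of $\dot\Gamma$ and $z\p_z\Gamma$ in $r$, and finally invert the map for general $\beta \in B_{X^s, R_s}$ via a Riemann--Hilbert type reduction using the auxiliary holomorphic function $F = \dot\Gamma/\p_z\Gamma$.

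For $\beta = 0$ we have $\wt\Gamma = 0$, $a_1 = 1$, and $\dot a_1 = 0$ by \eqref{E:B-2b}, hence $\dot\Gamma = y + \dot{\wt\Gamma}$ and $z\p_z\Gamma = z$. Writing $y = y_1 + iy_2$ and $\dot{\wt\Gamma}(z) = \sum_{m\geq 3}(\dot a_m + i\dot b_m) z^m$, direct expansion of $f = \text{Re}(\bar z\, \dot\Gamma)|_{S^1}$ gives
\[
f(\theta) = y_1 \cos\theta + y_2 \sin\theta + \sum_{m\geq 3}\bigl(\dot a_m \cos(m-1)\theta - \dot b_m \sin(m-1)\theta\bigr),
\]
which inverts to the stated formula for $Q(1, 0)$ and shows the isomorphism onto $\BFR^2 \times X^{s'}$. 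For the scaling, \eqref{E:B-2a} and \eqref{E:CM-2-1} show that $z\p_z\Gamma$ carries an overall factor of $r$ while $\dot\Gamma$ decomposes as $y + r(\cdots)$, so $f$ splits into contributions of orders $r^1$ from $y$ and $r^2$ from $\alpha$, which inverts to $Q(r, \beta) = \text{diag}(r^{-1}, r^{-2}) Q(1, \beta)$.

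For general $\beta \in B_{X^s, R_s}$, conformality of $\Gamma$ ensures $\p_z\Gamma$ is nonvanishing on $\overline{B_1}$, so $F := \dot\Gamma/\p_z\Gamma$ is holomorphic there and the defining relation becomes $\text{Re}(F/z) = f/|\p_z\Gamma|^2$ on $S^1$. Expanding $F = \sum_{n\geq 0} F_n z^n$ and matching Fourier coefficients, the modes $F_n$ for $n\geq 3$ and $\text{Re}(F_1)$ are determined directly by $f$, while $F_0$, $\text{Im}(F_1)$, and $F_2$ carry a three-dimensional real gauge freedom (reflecting the Mobius group of $B_1$) subject to two compatibility conditions coming from the $\cos\theta, \sin\theta$ modes of $f/|\p_z\Gamma|^2$. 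The $(y, \alpha)$ constraints eliminate this gauge: $y = \dot\Gamma(0) = a_1 F_0$ fixes $F_0$; reality of $\dot a_1 = a_1 F_1$ (automatic from \eqref{E:B-2b}) forces $\text{Im}(F_1) = 0$; and $\p_z^2 \dot\Gamma(0) = 0$, implied by $\dot{\wt\Gamma}$ starting at $z^3$, forces $F_2 = -3(a_3 + i b_3) F_0$ where $a_3 + i b_3$ is the $z^3$-coefficient of $\wt\Gamma$. Substitution into the two low-mode compatibility conditions yields a real $2\times 2$ linear system for $(y_1, y_2)$ with determinant $1 - 9(a_3^2 + b_3^2)$. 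Cauchy's integral formula applied to $\p_z\wt\Gamma(z) = \sum_{m\geq 3} m(a_m + i b_m) z^{m-1}$ yields $3\sqrt{a_3^2 + b_3^2} \le |\p_z\wt\Gamma|_{C^0(\overline{B_1})}$, which under $|\beta|_{X^s} < R_s$ is at most $\pi^{-1/2}$; hence the determinant stays uniformly bounded below by $1 - \pi^{-1} > 0$. This produces a unique solution and an explicit bounded inverse $Q(1, \beta)$ on $\dot H^{s'}(S^1)$ for $s' \in [1-s, s-1]$ through standard Sobolev multiplication estimates against the smooth positive multiplier $|\p_z\Gamma|^{-2}$, and smooth dependence on $\beta$ is inherited from smoothness of $a_1$, $a_3 + i b_3$, and $|\p_z\Gamma|^2$ as functions of $\beta$.

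The main obstacle is the low-mode analysis at general $\beta$: one must identify that the three real constraints imposed by $(y, \alpha)$ (the complex value $y$, reality of $\dot a_1$, and vanishing of $\p_z^2 \dot\Gamma(0)$) exactly match the three-dimensional Mobius gauge of the Riemann--Hilbert reduction, and then verify that the resulting $2\times 2$ compatibility matrix is \emph{uniformly} invertible over the \emph{entire} ball $B_{X^s, R_s}$, not merely in a neighborhood of $\beta = 0$. This uniformity is precisely what the choice of $R_s$ in \eqref{E:Rs} buys, through the Cauchy bound forcing $(a_3^2 + b_3^2)^{1/2} < 1/3$ throughout the ball.
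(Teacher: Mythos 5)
Your reduction for general $\beta$ --- passing to $\mathrm{Re}\big(\dot \Gamma /(z\p_z\Gamma)\big)=f/|\p_z\Gamma|^2$ on $S^1$, reading off the Fourier modes of index $\ge 2$ directly, and closing the low modes with a real $2\times 2$ system of determinant $1-9(a_3^2+b_3^2)$ controlled by the Cauchy estimate and the choice of $R_s$ --- is the same mechanism as the paper's proof: there the low-mode step is phrased as $\wt F=\dot\Gamma(z\p_z\Gamma)^{-1}-F=q_{-1}z^{-1}+q_0+q_1 z$ and the invertibility condition as $|\p_{zzz}\Gamma(0)|<2|\p_z\Gamma(0)|$, which is exactly your determinant condition. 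Your computation of $Q(1,0)$ and the $r$-scaling also agree with the paper.

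There is, however, a genuine gap in the bijectivity argument. The map \eqref{E:B-1} takes $(y,\alpha)$ as input and builds $\dot\Gamma$ with $\dot a_1$ equal to the \emph{specific real number} prescribed by \eqref{E:B-2b}; your inversion only imposes that $\dot a_1=a_1\,\mathrm{Re}(F_1)$ is real, and recovers its value from the zero mode of $f/|\p_z\Gamma|^2$. If these two values of $\dot a_1$ disagreed, the pair $(y,\alpha)$ you produce would not be a preimage of $f$: feeding it back into \eqref{E:B-1} would return $f$ plus a nonzero multiple of $r(z+\wt\Gamma)\cdot(z\p_z\Gamma)|_{S^1}$ (which at $\beta=0$ is a nonzero constant). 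They agree precisely because $f\in\dot H^{s'}(S^1)$ has mean zero, and establishing the equivalence $\int_{S^1}f\,d\theta=0\iff \eqref{E:B-2b}$ is the first half of the paper's proof, carried out via the identity $\int_{S^1}\gamma_1\cdot(e^{i\theta}\p_z\gamma_2)\,d\theta=2\int_{B_1}\p_z\gamma_1\cdot\p_z\gamma_2\,d\mu$ for holomorphic $\gamma_1,\gamma_2$. The same computation is also what shows the forward map is well defined into $\dot H^{s'}(S^1)$ (i.e.\ that $f$ has zero mean) in the first place. Your proposal never verifies either direction, so as written it establishes neither that \eqref{E:B-1} maps into $\dot H^{s'}$ nor that the recovered $(y,\alpha)$ is an actual preimage; adding this one computation would complete the argument. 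The remaining points (boundedness of the inverse via multiplication by $|\p_z\Gamma|^{-2}$ for $|s'|\le s-1$, smooth dependence on $\beta$) are fine.
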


\begin{proof} 
It is straight forward to obtain the scaling property in $r$ and thus we shall only work with $r=1$. We first show that $f$ defined in \eqref{E:B-1} satisfies $\int_{S^1} f d\theta=0$ if and only if $\dot a_1$ is given by \eqref{E:B-2b}. In fact, one can compute 
\[
\int_{S^1} f d\theta = \int_{S^1} \dot \Gamma \cdot (e^{i\theta} \p_z \Gamma) d\theta = \int_{S^1} \big( y + a_1 \dot {\wt \Gamma} +\dot a_1 (e^{i\theta} + \wt \Gamma) \big) \cdot  a_1 \big(e^{i\theta}(1  + \p_z \wt \Gamma) \big)d\theta. 
\]
Since along $S^1$, $\wt \Gamma$ and $\dot {\wt \Gamma}$ do not contain $e^{il\theta}$, $l=0, 1, 2$, in their Fourier series, we obtain 
\[
a_1^{-1} \int_{S^1} f d\theta  = 2 \pi \dot a_1 +  \int_{S^1} ( \dot a_1 \wt \Gamma + a_1 \dot {\wt \Gamma}) \cdot (e^{i\theta} \p_z \wt \Gamma) d\theta. 
\]
Given any holomorphic functions $\gamma_1(z)$ and $\gamma_2(z)$ on $B_1$, one may verify by using the Cauchy-Riemann equations and the divergence theorem 
\[
\int_{S^1} \gamma_1 \cdot (e^{i\theta} \p_z \gamma_2) d\theta = \int_{S^1} (\gamma_1 \overline{\p_z \gamma_2})\cdot e^{i\theta}  d\theta = \int_{B_1} \nabla \cdot  (\gamma_1 \overline{\p_z \gamma_2})  d\mu = 2 \int_{B_1} \p_z \gamma_1 \cdot \p_z \gamma_2   d\mu. 
\] 
which implies 
\[
\frac 1{2a_1} \int_{S^1} f d\theta  = \pi \dot a_1 + \int_{B_1} ( \dot a_1 \p_z \wt \Gamma + a_1  \p_z \dot {\wt \Gamma}) \cdot  \p_z \wt \Gamma d\mu .
\]
Solving for $\dot a_1$  and using \eqref{E:a1j} we obtain that $\int_{S^1} f d\theta=0$ if and only if $\dot a_1$ satisfies \eqref{E:B-2b}. 

Clearly $(y, \alpha) \to f$ is a bounded linear mapping from $\BFR^{2N} \times X^{s'}$ to $\dot H^{s'}(S^1)$. To complete the proof of the lemma, we show that the inverse $Q(1, \beta)f$ is well-defined. As $\p_z \Gamma  \ne 0$, it is easy to see that \eqref{E:B-1} is equivalent to 
\be \label{E:B-3}
\text{Re}\frac {\dot \Gamma }{z \p_z \Gamma } = \frac {f }{|\p_z \Gamma |^2}, \; \text{ on } S^1. 
\ee
Expand the the above right side in a Fourier series
\[
\frac {f (\theta)} {|\p_z \Gamma  (e^{i\theta})|^{2}} = \wt A_0 + \wt A_1 \cos \theta + \wt B_1 \sin \theta + \sum_{l=2}^\infty \big(\wt A_l \cos (l \theta) + \wt B_l \sin (l \theta) \big) \triangleq \wt A_0 + \wt A_1 \cos \theta + \wt B_1 \sin \theta + \wt f  (\theta). 
\]
Much as in \eqref{E:CM-2-2}, let 
\[
F(z) = \sum_{l=2}^\infty (\wt A_l - i \wt B_l) z^l, \; \; \text{ then } \;  \text{Re}F  (e^{i\theta}) = \wt f  (\theta). 
\]
Let $\wt F = \dot \Gamma  (z \p_z \Gamma )^{-1} - F $ and \eqref{E:B-3} is satisfied if and only if 
\[
\text{Re} \wt F  (e^{i\theta} )= \wt A_0 +\wt A_1 \cos \theta + \wt B_1 \sin \theta.
\]
Since $\wt F $ is holomorphic on $B_1 \backslash \{0\}$ with a possible pole of order $O(z^{-1})$ at $0$,  $\wt F $ should take the form of $q_{-1} z^{-1} + q_0 + q_1 z$. As 
\[
0< \p_z \Gamma (0) = a_1 = 1  + O(|\beta|_{H^s}^2),  \quad \p_{zz} \Gamma  (0)=0, \quad \p_z \dot \Gamma (0) \in \BFR, \quad \p_{zz} \dot \Gamma (0) =0,  
\]
a simple calculation of power series product implies 
\[
q_{-1} =\frac {\dot \Gamma (0)}{\p_z \Gamma  (0)}, \quad  q_1 = - \frac {\p_{zzz} \Gamma  (0) \dot \Gamma (0)}{2 \big(\p_z \Gamma  (0)\big)^2}, \quad \wt A_0=q_0 = \frac {\p_z \dot \Gamma (0)} {\p_z \Gamma  (0)}   \in \BFR.  
\] 
Through straight forward linear algebra calculation, we obtain that $\dot \Gamma(0)$ can be uniquely determined by $\wt A_1$ and $\wt B_1$ if $|\p_{zzz} \Gamma  (0)| < 2 |\p_{z} \Gamma  (0)|$, which is ensured by the assumption $|\beta|_{H^s} < R_s$. Therefore 
one may solve $\wt F $, $\dot \Gamma  (0) \in \BFC$, and $\p_z \dot \Gamma (0) \in \BFR$ from $\wt A_{0, 1}$ and $\wt B_0$ (and thus from $f $)
and thus we have found the unique solution $\dot \Gamma  = z \p_z \Gamma  (\wt F  + F )$ to equation \eqref{E:B-1}, which in turn yields $y $ and $\alpha $. 
The smooth dependence of $y$ and $\alpha$ on $x$ and $\beta$ is clear from the above process. When $\beta=0$, we have $\Gamma (z) = z$ and the exact form of $Q(1, 0) f$ can be computed explicitly using the form of $F$ and $q_{-1} = \wt A_1 + i \wt B_1$ and $q_0=q_1=0$. 
\end{proof}

Geometrically, $(y, \alpha) = Q(r, \beta) f$ corresponds to a variation $\dot \Gamma$ of the domain $\Gamma(B_1)$ recovered from the weighted normal velocity $f$. 
Equality \eqref{E:B-2b} on $\dot a_1$ means the perturbed domain $(\Gamma + \tau \dot \Gamma) (B_1)$, which has normal velocity $|\p_z \Gamma|^{-1} f$  at $\tau=0$ is volume preserving infinitesimally at $\tau=0$. 

Consequently the vortex patches dynamics of the Euler equation in the conformal mapping parametrization takes the form 
\be \label{E:Euler-VP-CM}
\p_t (x_j, \beta_j) = Q(r_j, \beta_j) \p_\theta \big(\Psi \circ \Gamma_j(e^{i\theta})\big)
\ee 
where $\Psi$ is the stream function of $\omega$ determined by Lemma \ref{L:Stream-F}. In Section \ref{S:stability}, we shall linearize this equation at a steady state to study its spectral stability. In this analysis, the following Hamiltonian structure of \eqref{E:Euler-VP-CM} essentially inherited from that of \eqref{E:Euler-V} plays an important role. 

In fact, at any $(X, \beta) \in \Sigma_\rho \times (X^s)^N$ with $\tilde \Gamma_j$, $\Gamma_j$, and $\Omega_j$ defined accordingly,  using \eqref{E:energy-V} the conserved energy $\BFE$ can be computed in terms of $\Omega_j$:
\be \label{E:energy-p} \begin{split} 
E_p (X, \beta) -  \frac 12  \vec{\CC}^T \CN^{-1}& \vec{\CC} =  - \frac 12 \int_\Omega \omega \Delta_0^{-1} \omega d\mu -  \sum_{j=1}^N \sum_{l=1}^n \frac {c_l \mu_j}{\pi r_j^2} \int_{\Omega_j} \CH_l d \mu \\
=& - \sum_{j, k=1}^N \frac {\mu_j \mu_k}{2\pi^2 r_j^2 r_k^2} \int_{\Omega_j} \Delta_0^{-1} \chi_{\Omega_k} d\mu -  \sum_{j=1}^N \sum_{l=1}^n \frac {c_l \mu_j}{\pi r_j^2} \int_{\Omega_j} \CH_l d \mu. 
\end{split} \ee

At each $\omega$ in the form of \eqref{E:VP-omega} associated to $(X, \beta)$ (thus equivalently $\wt \Gamma_j$, $\Gamma_j$, $\Omega_j$), a variation $(\dot X, \dot \beta)$ of $(X, \beta)$ corresponds to a variation (in the distribution sense) $\dot \omega$  of vortex patches consists of a measure supported on each $\p \Omega_j$ with the density proportional to the normal variation $\nu_j: \p \Omega_j \to \BFR$ of each $\p \Omega_j$
\be \label{E:tangent-p}
\dot \omega = \sum_{j=1}^N \frac {\mu_j}{\pi r_j^2} \nu_j\delta_{\p \Omega_j}, \qquad \int_{\p \Omega_j} \nu_j dS =0, 
\ee
where 
the condition $\int_{\p \Omega_j} \nu_j dS =0$ is due to the volume preservation of the fluid transport.  
As in the proof of Lemma \ref{L:Ham-E}, the derivative of $E_p$ can be calculated as 
\[
\langle \CD E_p(X, \beta), (\dot X, \dot \beta)\rangle 
=  - \int_\Omega \dot \omega \Psi d\mu =  - \sum_{j=1}^N \frac {\mu_j}{\pi r_j^2} \int_{\p \Omega_j} \nu_j (\Psi -const) d S.
\]
Due to Lemma \ref{L:Qbdd}, we have 
\[
(\dot x_j, \dot \beta_j) = Q(r_j, \beta_j) (|\p_z \Gamma_j| \nu_j) 
\]
and thus 
\be \label{E:d-energy-p}  \begin{split}
\langle \CD E_p(X, \beta), (\dot X, \dot \beta)\rangle = - \sum_{j=1}^N \frac {\mu_j}{\pi r_j^2} \int_{S^1} Q(r_j, &\beta_j)^{-1} (\dot x_j, \dot \beta_j)\times \\
&\big( \Psi\circ \Gamma_j  - \frac 1{2\pi} \int_{S^1}  \Psi \circ \Gamma_j  d\theta' \big)   d\theta. 
\end{split} \ee
Through the duality of $L^2 (S^1)^N$, one may identify $\CD E_p (X,\beta)$ with 
\be \label{E:d-energy-p-1}  
\CD E_p(X, \beta) = (Y, \alpha), \quad (y_j, \alpha_j) =  - \frac {\mu_j}{\pi r_j^2} \big(Q(r_j, \beta_j)^{-1} \big)^*  
\big( \Psi\circ \Gamma_j  - \frac 1{2\pi} \int_{S^1}  \Psi \circ \Gamma_j  d\theta' \big).
\ee
Since the vortex dynamics of the Euler equation is equivalent to \eqref{E:Euler-VP-CM}, from Lemma \ref{L:Qbdd}, we obtain its form in the conformal mapping parametrization
\be \label{E:VP-Ham-1} 
\p_t (X, \beta) = J_p(\beta) \CD E_p (X, \beta), 
\ee
where 
\[
J_p (\beta) = diag\big(J_p^1(\beta_1), \ldots, J_p^N (\beta_N)\big) , \quad J_p^j (y_j, \alpha_j) =- \mu_j^{-1} \pi r_j^2 Q(r_j, \beta_j) \p_\theta \big(Q(r_j, \beta_j)^* (y_j, \alpha_j)\big).
\]
Clearly $J_p$ is (unbounded, depending on $\vr, \beta$) anti-self-adjoint operator, namely
\be \label{E:J-VP} 
J_p^* = -J_p. 
\ee
This symplectic operator $J_p(\beta)$ for vortex patch dynamics is much nicer than $J_0(\omega)$ for the general 2-dim Euler equation as $J_p$ has a bounded inverse on $\dot H^s(S^1)$. 

 

This nonlinear Hamiltonian structure induces a linear one for the  vortex patch dynamics  linearized at a steady states, which will be useful in the  the spectral analysis  in Section \ref{S:stability}.


\section{Concentrated steady vortex patches} \label{S:VP}

In this section, we shall prove Theorem \ref{T:VP} on the existence of steady piecewise constant concentrated  vorticities $\omega = \Sigma_{j=1}^N \omega_j$ in the form of \eqref{E:VP-omega} satisfying \eqref{E:localized-V} and \eqref{E:localization}. 
Fix vorticities $\mu_1, \ldots, \mu_N \in \BFR\backslash\{0\}$ as given in Theorem \ref{T:VP},  $s>\frac 32$, and 
\be \label{E:rho-1} 
\rho>0, \quad 
\; \text{ such that } X_* \in \Sigma_\rho,
\ee
where we recall $X_*$ is the critical point of $H_{\vec{\CC}}(X)$ provided in Theorem \ref{T:VP}. For any 
\[
X =(x_1, \ldots, x_N) \in \Sigma_\rho, \quad \beta = (\beta_1, \ldots, \beta_N) \in B_{ (X^s)^N, R_s}, 
\]
and 
\be \label{E:D-R}
\vec{r} \in D_R, \;\text{ where } D_R = \{\vec{r}= (r_1, \ldots, r_N)^T \in (\BFR^+)^N \mid |\vec{r}| < R\}, \quad R<< \rho, 
\ee
where $X^s$ and $R_s$ are defined in \eqref{E:Xs} and \eqref{E:Rs} and $B_{(X^s)^N, R_s }$ is the ball of radius $R_s$ in $(X^s)^N$, define $\wt \Gamma_j$, $\Gamma_j$, and $\Omega_j = \Gamma_j(B_1)$ as in \eqref{E:CM-2-1}, \eqref{E:CM-2-2}, and \eqref{E:a1j}, as well as the stream function $\Psi$ by Lemma \ref{L:Stream-F}. According to Lemma \ref{L:steady-VP}, $(X, \beta)$ corresponds to a steady vortex patch if and only if $\Psi|_{\p \Omega_j} =const$ for $j=1, \ldots, N$. Define 
\be \label{E:phi-VP-1}
\phi_j (X, \beta, \vec{r}) (\theta) =\p_\theta  \Psi \big(\Gamma_j(\theta)\big). 
\ee
For small $\vec{r}$, to make $\phi_j= 0$ for all $1\le j \le N$, we shall seek proper $X \in \Sigma_\rho$ and $\beta \in (X^s)^N$ through a perturbation argument carried in the rest of the section. 

As seen in the definition of $\Psi$ in Lemma \ref{L:Stream-F}, a key part is the convolution with $\log |x|$ with each individual vortex patch. For $r>0$ and $\alpha \in X^s$ with $|\alpha|< R_s$, let 
\[
\wt h(\alpha)(x) = \frac 1{2\pi^2 r^2}  \int_{D} \log |x - y|  d\mu_y, \;\; x\in \BFR^2; \quad h(\alpha) (\theta)=\p_\theta \big( \wt h (\alpha) \big( \gamma (e^{i\theta})\big)\big) 
\]
where $D =\gamma(B_1)$ and $\gamma(z) = a_1r \big(z + \wt \gamma(z) \big)$ is the conformal mapping centered at $0$ defined by $\alpha$ and $r$ as in \eqref{E:CM-2-1}, \eqref{E:CM-2-2}, and \eqref{E:a1j}. The coordinate change $x =\gamma(z)$ 
yields 
\be \label{E:h-1} \begin{split}
h(\alpha) (\theta) = &\frac {1}{2\pi \int_{B_1} 1+ |\p_z \wt \gamma|^{2}d\mu } \int_{B_1} |1+ \p_z \wt \gamma(z)|^{2} \frac { \big(\gamma (e^{i\theta}) - \gamma(z)\big) \cdot  \p_\theta \big(\gamma (e^{i\theta}) \big)} {|\gamma (e^{i\theta}) - \gamma(z)|^2} d\mu_z \\
=& \frac {1}{2\pi \int_{B_1} 1+ |\p_z \wt \gamma|^{2}d\mu } \text{Re}\int_{B_1} |1+ \p_z \wt \gamma(z)|^{2} \frac {  ie^{i\theta}  \big(1+ \p_z \wt \gamma (e^{i\theta}) \big)}{ e^{i\theta} - z +\wt\gamma  (e^{i\theta}) -\wt \gamma(z)} d \mu_z
\end{split} \ee
which is independent of $r$ due to the scaling property. We first have 

\begin{lemma} \label{L:smooth-2} 
It holds $h \in  C^\infty \big(B_{X^s, R_s}, \dot H^{s-1} (S^1) \big)$ for any $s>\frac 32$, $h(0)=0$, and 
\[
\CD h(0) \sum_{l=3}^\infty (A_l \cos l \theta + B_l \sin l \theta) =\sum_{l=2}^\infty  \frac {l-1}{2\pi} \big(B_{l+1} \cos l \theta -A_{l+1} \sin l \theta\big).
\]
Moreover, for any $s' \in [1-s, s]$, $\CD h \in C^\infty \big(B_{X^s, R_s},  L(X^{s'}, \dot H^{s'-1} (S^1)) \big)$. 
\end{lemma}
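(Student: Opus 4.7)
The plan is to exploit the holomorphicity of $\wt\gamma$ so as to isolate in \eqref{E:h-1} the fixed Cauchy-type kernel $(e^{i\theta}-z)^{-1}$, then use the power-series expansion of this kernel to extract both the smooth dependence on $\alpha$ and the Sobolev regularity in $\theta$. First, since $\wt\gamma$ is holomorphic on $B_1$, the fundamental theorem of calculus gives
\[
\wt\gamma(e^{i\theta})-\wt\gamma(z) = (e^{i\theta}-z)\int_0^1 \p_z\wt\gamma\bigl(z+t(e^{i\theta}-z)\bigr)\,dt,
\]
so the denominator in \eqref{E:h-1} factors as $(e^{i\theta}-z)K(\theta,z;\alpha)$ with $K = 1+\int_0^1 \p_z\wt\gamma(z+t(e^{i\theta}-z))\,dt$. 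The assumption $|\alpha|_{X^s}<R_s$ combined with the Cauchy--Schwartz bound preceding \eqref{E:a1j} gives $|\p_z\wt\gamma|_{C^0(B_1)} < 1/\sqrt{\pi}$, so $|K|$ is uniformly bounded away from $0$. After this factoring, the integrand of \eqref{E:h-1} is $(e^{i\theta}-z)^{-1}$ times a function that is jointly smooth in $(\theta,z)\in S^1\times\overline{B_1}$ and smooth in $\alpha \in B_{X^s,R_s}$.

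Next, for $z\in B_1$ the convergent expansion $(e^{i\theta}-z)^{-1}=\sum_{k\ge 0} z^k e^{-i(k+1)\theta}$ reduces each $z$-integration in \eqref{E:h-1} to moments $\int_{B_1} z^k P(\wt\gamma,\overline{\wt\gamma},\p_z\wt\gamma)\,d\mu_z$, and the orthogonality $\int_{B_1}z^k\bar z^m\,d\mu=\frac{\pi}{k+1}\delta_{km}$ further reduces these moments to specific Fourier coefficients of $\wt\gamma|_{S^1}$. Since $\alpha\in X^s$ corresponds via the isomorphism \eqref{E:CM-2-3} to $\wt\gamma|_{S^1}\in H^s(S^1)$, the resulting Fourier series in $\theta$ for $h(\alpha)$ lies in $H^{s-1}(S^1)$; the one-derivative loss arises from the shift $e^{-i(k+1)\theta}$ weighted against index $k$ in each moment. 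The mean-zero property is automatic since $h=\p_\theta F$ for a $2\pi$-periodic $F$. The $C^\infty$ dependence of $h$ on $\alpha$ valued in $\dot H^{s-1}(S^1)$ follows from the uniform lower bound on $|K|$, the analyticity of the integrand as a function of $\wt\gamma,\overline{\wt\gamma},\p_z\wt\gamma$, and the Banach algebra property of $H^{s+1/2}(B_1)$ for $s>\frac32$.

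At $\alpha=0$, $\wt\gamma\equiv 0$ and only the $k=0$ moment survives: $\int_{B_1}(e^{i\theta}-z)^{-1}d\mu_z=\pi e^{-i\theta}$, so $h(0)(\theta)=\frac{1}{2\pi^2}\mathrm{Re}(i\pi)=0$. For the derivative, write $\dot{\wt\gamma}(z)=\sum_{l\ge 3}(A_l-iB_l)z^l$ and linearize each factor of \eqref{E:h-1} at $\wt\gamma=0$; this produces three contributions, from $2\,\mathrm{Re}\,\p_z\dot{\wt\gamma}(z)$, from $\p_z\dot{\wt\gamma}(e^{i\theta})$, and from the denominator variation $-(e^{i\theta}-z)^{-2}\bigl(\dot{\wt\gamma}(e^{i\theta})-\dot{\wt\gamma}(z)\bigr)$. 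The last is handled by the telescoping identity $\frac{e^{il\theta}-z^l}{e^{i\theta}-z}=\sum_{j=0}^{l-1}e^{i(l-1-j)\theta}z^j$, which again reduces the integral to a single Cauchy factor times a polynomial. Carrying out the three resulting moment integrals termwise and collecting real parts yields, after the index shift $l\mapsto l-1$, precisely the formula claimed in the lemma.

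The operator-valued smoothness statement $\CD h\in C^\infty(B_{X^s,R_s}, L(X^{s'}, \dot H^{s'-1}(S^1)))$ for $s'\in[1-s,s]$ is proved by applying the same factor-and-expand argument to the explicit formula for $\CD h(\alpha)\dot\alpha$, which is bilinear in $\wt\gamma$ and $\dot{\wt\gamma}$ modulo smooth higher-order corrections. The endpoint $s'=s$ is immediate. The endpoint $s'=1-s$ comes from the duality $(\dot H^{s-1}(S^1))^*\cong \dot H^{1-s}(S^1)$ applied to the formal $L^2$-adjoint of $\CD h(\alpha)$, whose integral kernel has the same structural form; intermediate $s'$ follows by interpolation. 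The principal obstacle throughout is the simultaneous control of $\theta$-regularity on $S^1$ and $\alpha$-smoothness in $X^s$ across the borderline-integrable kernel $(e^{i\theta}-z)^{-1}$; the factorization in the first step is precisely what makes this manageable, since after it every subsequent estimate is either a Banach algebra computation or a Fourier-coefficient calculation.
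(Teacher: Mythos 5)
Your overall strategy --- factor the denominator as $(e^{i\theta}-z)K(\theta,z;\alpha)$ with $K=1+\int_0^1\p_z\wt\gamma(z+t(e^{i\theta}-z))\,dt$, note $|K|\ge 1-\pi^{-1/2}>0$ on $B_{X^s,R_s}$, and then analyze the resulting Cauchy-type integral directly --- is exactly the ``singular integral'' route that the paper mentions in one sentence and then deliberately avoids, and your sketch breaks down at the two places where that route is actually hard. First, after the factorization the remaining integrand is \emph{not} ``jointly smooth in $(\theta,z)\in S^1\times\overline{B_1}$'': it carries only the finite regularity of $\p_z\wt\gamma$ (roughly $H^{s-1/2}$), and the regularity in $\theta$ of $\int_{B_1}(e^{i\theta}-z)^{-1}G(\theta,z)\,d\mu_z$ for such $G$ is precisely the delicate point, not a given. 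Second, the reduction to $\theta$-independent moments $\int_{B_1}z^k P(\wt\gamma,\overline{\wt\gamma},\p_z\wt\gamma)\,d\mu_z$ is invalid as stated, because $K(\theta,z;\alpha)^{-1}$ depends on $\theta$ through $\wt\gamma(e^{i\theta})$; to remove that dependence you would have to expand $K^{-1}$ in a Neumann series in $\wt\gamma$, and the convergence of that series in the operator topology needed to conclude $H^{s-1}(S^1)$ regularity requires smallness of an $H^s$-algebra norm of $\beta$, which the radius $R_s$ (chosen only to control $|\p_z\wt\gamma|_{C^0}$) does not provide on all of $B_{X^s,R_s}$. Relatedly, your stated mechanism for the derivative count (``the one-derivative loss arises from the shift $e^{-i(k+1)\theta}$'') does not correspond to an estimate; the actual content of the lemma (cf.\ Remark \ref{R:smooth}) is that the boundary trace $\wt h(\alpha)\circ\gamma|_{S^1}$ loses \emph{no} derivatives relative to the boundary regularity --- it lies in $H^s(S^1)$ --- and the $-1$ comes solely from the outer $\p_\theta$. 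A naive singular-kernel count does not see this.

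The paper sidesteps all of this with a potential-theoretic argument: writing $f(\alpha)=\wt h(\alpha)|_{\p D}$, matching $\wt h|_{D^c}=\BBH_{D^c}f$ and $\wt h|_D=\BBH_D f+\Delta_D^{-1}1$ across $\p D$ via the normal derivative, and solving $f=-(\BBN_D+\BBN_{D^c})^{-1}\nabla_{N_D}\Delta_D^{-1}1$ using the uniform positivity of $\BBN_D+\BBN_{D^c}$; the regularity and the $C^\infty$ dependence on $\alpha$ then come from the known smooth dependence of $\BBN$ and $\Delta^{-1}$ on the domain. Your computation of $\CD h(0)$ (three linearized contributions plus the telescoping identity for $\frac{e^{il\theta}-z^l}{e^{i\theta}-z}$) does match the paper's $I+II+III$ calculation and is fine \emph{once} differentiability of $h$ has been established, and your duality-plus-interpolation suggestion for the range $s'\in[1-s,s]$ is reasonable; but the core regularity and smoothness claims need to be either reworked along the paper's lines or supplied with the missing Neumann-series and commutator estimates.
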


Clearly $\wt h(\alpha)$ is the stream function of $\omega = \frac 1{|D|}\chi_D$ on $\BFR^2$. The formal calculation on $\CD h(0)$ had first carried out by Kelvin (1880) (see, for example, \cite{La93}) who also obtained the linear stability of circular patches in $\BFR^2$. 

\begin{remark} \label{R:smooth}
It is worth pointing out that $\frac 1{|D|}\chi_D$ is not in $C^1(\Omega)$ due to its jump along $\p D$, so $\wt h(\alpha)$ is not smoother than $C^3(\BFR^2)$. This lemma states that $\wt h(\alpha)|_{\p D}$ can be as smooth as $\p D$, essentially due to the observation that the obstacle to the smoothness of $\wt h(\alpha)$ is mainly just the differentiation in the transversal direction of $\p D$.
\end{remark}

\begin{proof} 
The regularity of $h(\alpha)$ and its smooth dependence on $\alpha$ may be proved by analyzing the above singular integral operator. Alternatively, the following is a slight modification of the proof of Lemma B.3 in \cite{SWZ13}. 

For any domain $U \subset \BFR^2$, let $(\Delta_U)^{-1}$, $\BBH_U$, $N_U$, and $\BBN_U$ denote the inverse Laplacian on $U$ with zero Dirichlet boundary condition, the harmonic extension from $\p U$ to $U$, the unit outward normal vector of $U$, and the Dirichlet-to-Neumann operator associated to harmonic functions on $U$. Namely, $\BBN_U f = \nabla_{N_U} (\BBH_U f) :\p U\to \BFR$. If $U$ is unbounded, $\BBH_U f \in \dot H^1(U)$ is required.  
It is clear 
\[
\wt h(\alpha) \in W_{loc}^{2, p} (\BFR^2), \;\; \forall p \in (1, \infty), \quad \;\; \Delta \wt h(\alpha)|_{D} = 1, \quad \Delta \wt h(\alpha)|_{D^c} =0. 
\]
Let $f (\alpha) = \wt h(\alpha)|_{\p D} \in H^{\frac 32} (\p D)$, then we have 
\[
\wt h(\alpha)|_{D^c} = \BBH_{D^c} f (\alpha), \quad \wt h(\alpha)|_D = \BBH_D f(\alpha) + \Delta_D^{-1} 1. 
\]
Since $\wt h(\alpha) \in W_{loc}^{2,p}(\BFR^2)$ implies 
\[
0=\nabla_{N_D} \wt h(\alpha)|_{\p D}+ \nabla_{N_{D^c}} \wt h(\alpha)|_{\p D} = \BBN_D f(\alpha) + \nabla_{N_D} \Delta_D^{-1}1 + \BBN_{D^c} f(\alpha),
\]
and $\BBN_D + \BBN_{D^c}$ is uniformly positive, we have 
\[
f(\alpha) = - (\BBN_{D} + \BBN_{D^c})^{-1} \nabla_{N_D}   \Delta_D^{-1} 1 \in H^s(\p D). 
\]
Moreover, the $C^\infty$ dependence of $f(\alpha)\circ \gamma |_{S^1}$ on $\alpha$, as well as that of $h(\alpha) (\theta) = \p_\theta \big(f(\alpha) \big( \gamma (e^{i\theta})\big)\big)$, 
follow from the properties of $\BBN$ and $\Delta^{-1}$, in particular their smooth dependence on domains. (See, for example, \cite{SZ11} where are the variations of these operators with respect to domains are given.) 

When $\alpha=0$, $D$ is a disk and $\wt h(0)$ is radially symmetric and thus $h(0) \equiv 0$. Having proved the smoothness of $h(\alpha)$ with respect to $\alpha$, one may compute $\CD h(0) \alpha$ based on \eqref{E:h-1}:
\begin{align*}
&\big( \CD h(0) \alpha\big) (\theta) =  \frac 1{2\pi^2}\text{Re}\int_{B_1} ie^{i\theta} \big( \frac {2 \text{Re}(\p_z \wt \gamma(z))} {e^{i\theta} - z}  +  \frac {\p_z \wt \gamma (e^{i\theta})}{e^{i\theta} - z}  - \frac { \wt\gamma  (e^{i\theta}) -\wt \gamma(z)}{ (e^{i\theta} - z)^2}\big) d \mu_z\\
=&\frac 1{2\pi^2} \text{Re}\int_{B_1} i \big(\frac {2 \text{Re}(\p_z \wt \gamma(e^{i\theta}z))} { 1- z} + \frac {\p_z \wt \gamma (e^{i\theta})}{1 - z}  - e^{-i \theta} \frac { \wt\gamma  (e^{i\theta}) -\wt \gamma(e^{i\theta} z)}{ (1 - z)^2}\big) d \mu_z \triangleq I + II + III.
\end{align*}
For $\alpha = A \cos l \theta + B \sin l \theta$, $l\ge 3$, according to \eqref{E:CM-2-2} we have $\wt \gamma (z) = r_* e^{i\theta_*} z^{l}$, where $r_* e^{i\theta_*}= A- iB$. One may compute 
\begin{align*}
I=& \frac {l r_*}{\pi^2} \text{Re}\int_{B_1} i \sum_{m=0}^\infty z^m \text{Re}(e^{i( \theta_* + (l-1) \theta)} z^{l-1}) d\mu_z \\
=& \frac {l r_*}{\pi^2} \text{Re}\int_{0}^1 \int_{S^1} i \tau^{2l-1} e^{i(l-1) \theta'} \cos \big( \theta_* + (l-1) \theta+ (l-1) \theta' \big) d\theta' d\tau  \\
=& - \frac {l r_*}{\pi^2} \int_{0}^1 \int_{S^1} \tau^{2l-1} \sin (l-1) \theta' \cos \big( \theta_* + (l-1) \theta+ (l-1) \theta' \big) d\theta' d\tau\\
=& \frac {r_*}{2\pi}  \sin \big( \theta_* + (l-1) \theta \big) = \frac 1{2\pi} \big( - B \cos (l-1) \theta + A \sin (l-1) \theta\big) 
\end{align*}
and 
\[
II+ III= \frac 1{2\pi^2}\text{Re}\int_{B_1} \frac {i (A - iB)e^{i(l-1)\theta}}{1-z} \big(l   - \sum_{m=0}^{l-1} z^m     \big) d \mu_z=  \frac {l-1}{2\pi} \big(B\cos (l-1) \theta - A\sin (l-1) \theta\big).
\]
The desired formula of $\CD h(0) \alpha \in \dot H^{s-1}(S^1)$ follows. 
\end{proof}

To study other terms in $\phi_j (X, \beta, \vec{r}) (\theta)$ defined in \eqref{E:phi-VP-1}, for any given smooth function $f(x, y)$ defined on $\Omega^2$ and $1\le j, j' \le N$, let 
\be \label{E:F-1}
F(X, \beta, \vec{r}) (\theta) = r_j^{-1} \p_\theta \int_{\Omega_{j'}}f \big( \Gamma_j (e^{i\theta}), y\big) \omega_{j'} (y) d\mu_y, \quad \omega_{j'} = \frac {\mu_{j'}}{\pi r_{j'}^2} \chi_{\Omega_{j'}}. 
\ee
One may compute 
\be \label{E:F} \begin{split}
F(X, \beta, \vec{r}) (\theta)= \frac {\mu_{j'} (a_1^{j'})^2 a_1^{j}}{\pi}  \int_{B_1} &|1+ \p_z \wt \Gamma_{j'}|^{2}  \big((\nabla_1f) \big(\Gamma_j (e^{i\theta}), \Gamma_{j'} (z)\big)\big)\\
& \cdot \big( ie^{i\theta} (1+ \p_z \wt \Gamma_j (e^{i\theta}) )\big) d\mu_z 
\end{split} \ee
which immediately implies the regularity of $F$ and its smooth dependence on $(X, \beta, \vec{r})$. 

\begin{lemma} \label{L:F-1}
For any 
$s' \in [1-s, s]$ and smooth $f(x,y)$, there exists $R>0$ such that 
$F$ can be extended to $\vec{r} \in \overline{ D_R}$ satisfying   
\begin{align*}
& \CD_{(X, \beta)} F \in C^\infty \big(\Sigma_\rho  \times B_{(X^s)^N, R_s} \times \overline{D_R}, \ L \big( \BFR^{2N} \times (X^{s'})^N, \dot H^{s'-1}(S^1)\big)\big).
\end{align*}
Moreover we have 
\begin{align*}
F(X, \beta, 0)=  \lim_{|\vec{r}| \to +0} F (X, \beta, \vec{r}) =  \mu_{j'} a_1^j \big(i e^{i\theta} (1+ \p_z \wt \Gamma_j (e^{i\theta}) )\big)\cdot  \nabla_1 f(x_j, x_{j'}).
\end{align*}
\end{lemma}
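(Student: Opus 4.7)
The plan is to work directly with the explicit integral representation \eqref{E:F}, which already extends smoothly to $\vec{r}=0$: the original $r_j^{-1}$ in the definition of $F$ has been canceled by the $r_j$ coming from $\partial_\theta \Gamma_j(e^{i\theta}) = i r_j a_1^j e^{i\theta}(1+\partial_z\tilde\Gamma_j(e^{i\theta}))$, and the change of variables $y=\Gamma_{j'}(z)$ produces a Jacobian $(a_1^{j'} r_{j'})^2 |1+\partial_z \tilde\Gamma_{j'}|^2$ that eats the $r_{j'}^{-2}$ from $\omega_{j'}= \mu_{j'}(\pi r_{j'}^2)^{-1}\chi_{\Omega_{j'}}$. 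Consequently the integrand in \eqref{E:F} is polynomial in $r_j, r_{j'}$ through $\Gamma_j$ and $\Gamma_{j'}$, and smooth in the remaining variables uniformly on $\Sigma_\rho \times \overline{B_{(X^s)^N, R_s}} \times \overline{D_R}$ once $R$ is small enough to keep $\Gamma_{j'}(B_1)\subset \Omega$ and the conformal mappings non-degenerate.

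Next I would establish the operator-valued smoothness into $\dot H^{s'-1}(S^1)$. The $\theta$-dependence in \eqref{E:F} enters only through the traces $\Gamma_j(e^{i\theta})\in H^s(S^1)$ and $\partial_z\tilde\Gamma_j(e^{i\theta})\in H^{s-1}(S^1)$, composed with the smooth function $\nabla_1 f$ (smooth composition is controlled since $s>\tfrac32$), multiplied against the $z$-integral which is itself a $C^\infty$ function of its arguments. Hence $F\in H^{s-1}(S^1)$, and the mean vanishes automatically because $F = \partial_\theta [\,\cdot\,]$, giving $F\in \dot H^{s-1}$. Differentiating \eqref{E:F} in $(X,\beta)$ in a direction $(\dot X, \dot\beta) \in \BFR^{2N}\times (X^{s'})^N$ produces one variational factor $\dot{\tilde\Gamma}_j$ or $\dot{\tilde\Gamma}_{j'}$ of regularity dictated by $X^{s'}$ (hence $H^{s'-1}(S^1)$ after taking $\partial_z$ on $S^1$), with all other factors smooth. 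The Sobolev multiplication $H^{s-1}(S^1)\cdot H^{s'-1}(S^1) \hookrightarrow H^{s'-1}(S^1)$ valid for $s>\tfrac32$ and $|s'-1|\le s-1$ delivers the target regularity across the whole range $s'\in[1-s, s]$, with the negative end handled by duality (multiplication by $H^{s-1}$ with $s-1>\tfrac12$ is bounded on $H^{s'-1}$). Iterating this argument gives $C^\infty$ regularity of $\mathcal{D}_{(X,\beta)}F$ into $L(\BFR^{2N}\times (X^{s'})^N, \dot H^{s'-1}(S^1))$.

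Finally, the limit formula at $\vec{r}=0$ follows by direct substitution in \eqref{E:F}: $\Gamma_j(e^{i\theta})=x_j$ and $\Gamma_{j'}(z)=x_{j'}$ are constants, so $\nabla_1 f$ pulls out of the $z$-integral, leaving
\[
F(X,\beta,0)(\theta) = \frac{\mu_{j'}(a_1^{j'})^2 a_1^j}{\pi}\Big(\int_{B_1} |1+\partial_z\tilde\Gamma_{j'}|^2\, d\mu\Big)\, \nabla_1 f(x_j, x_{j'})\cdot \bigl(ie^{i\theta}(1+\partial_z\tilde\Gamma_j(e^{i\theta}))\bigr).
\]
Since $\tilde\Gamma_{j'}$ has no terms of degree $\le 2$, we have $\int_{B_1}\partial_z \tilde\Gamma_{j'}\,d\mu = 0$, hence $\int_{B_1}|1+\partial_z\tilde\Gamma_{j'}|^2 d\mu = \pi + \int_{B_1}|\partial_z\tilde\Gamma_{j'}|^2 d\mu$, and \eqref{E:a1j} gives exactly $(a_1^{j'})^2 = \pi/\int_{B_1}(1+|\partial_z\tilde\Gamma_{j'}|^2)d\mu$, so the coefficient reduces to $\mu_{j'} a_1^j$, matching the claimed formula.

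The main obstacle I expect is the sharpness of the endpoint regularity: the statement claims uniform smoothness of $\mathcal{D}_{(X,\beta)}F$ into $L(\cdot, \dot H^{s'-1})$ for the full closed interval $s'\in [1-s,s]$, and verifying that the product/composition estimates hold at the negative end requires careful pairing via duality (since $H^{s'-1}$ is a distribution space once $s'<\tfrac12$). Everything else is bookkeeping with the explicit formula \eqref{E:F} and its derivatives.
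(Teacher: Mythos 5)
Your proposal is correct and takes essentially the same route as the paper, which likewise reads everything off the explicit formula \eqref{E:F} (whose derivation already cancels the singular factors $r_j^{-1}$ and $r_{j'}^{-2}$ via $\p_\theta \Gamma_j(e^{i\theta})$ and the Jacobian of $y=\Gamma_{j'}(z)$) and obtains the $\vec{r}=0$ limit by substituting $\Gamma_m(z)=x_m$; your verification that the constant collapses to $\mu_{j'}a_1^j$ via \eqref{E:a1j} and $\int_{B_1}\p_z\wt\Gamma_{j'}\,d\mu=0$ is exactly right. Your endpoint worry at $s'=1-s$ resolves cleanly because the factor multiplying the lowest-regularity piece $\p_z\dot{\wt\Gamma}_j|_{S^1}\in H^{s'-1}(S^1)$ is $\nabla_1 f(\Gamma_j(e^{i\theta}),\cdot)\in H^{s}(S^1)$ rather than merely $H^{s-1}$, and multiplication by $H^{s}(S^1)$ with $s>\frac32$ is bounded on $H^{c}(S^1)$ for all $|c|\le s$, which covers $c=s'-1=-s$.
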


In the above last formula of $F(X, \beta, 0)$, where both $a_1^j$ and $\wt \Gamma_j$ depend on $\beta_j$, follows from the fact that $\Gamma_m (z) =x_m$ if $r_m=0$. Like $\CD h$, the above property of  $\CD F$ means that not only $F$ is smooth from $\beta \in (X^s)^N$ to $\dot H^{s-1}(S^1)$, but also $\CD F$ may be applied to functions of lower regularity. This will be used in Section \ref{S:stability} in the spectral analysis of the linearized vortex patch dynamics at the steady patches. Based on this lemma, we can prove 

\begin{lemma} \label{L:smooth-3}
For any 
$s' \in [1-s, s]$, there exists $R>0$ such that 
\[
\phi_j(X, \beta, \vec{r})= r_j R_j (X, \beta, \vec{r}) + \mu_j h(\beta_j) 
\]
and 
\begin{align*}
&R_j \in C^\infty \big(\Sigma_\rho  \times B_{(X^s)^N, R_s} \times \overline{D_R}, \dot H^{s-1}(S^1)\big),\\
&\CD_{(X, \beta)} R_j  \in C^\infty \big(\Sigma_\rho  \times B_{(X^s)^N, R_s} \times \overline{D_R}, \ L \big( \BFR^{2N} \times (X^{s'})^N, \dot H^{s'-1}(S^1)\big)\big). 
\end{align*}
Moreover we have 
\be \label{E:Rj}
R_j(X, \beta, 0)=  \lim_{|\vec{r}| \to +0} R_j (X, \beta, \vec{r}) = 
- \mu_j^{-1} a_1^j \big(i e^{i\theta}(1+ \p_z \wt \Gamma_j (e^{i\theta}) )\big)\cdot  \nabla_{x_j} H_{\vec{\CC}} (X)
\ee 
where $H_{\vec{\CC}}$ was defined in \eqref{E:E0}. 
\end{lemma}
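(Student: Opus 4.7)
The plan is to isolate from $\phi_j$ the self-interaction of $\omega_j$ through the logarithmic kernel, which will reproduce $\mu_j h(\beta_j)$ via the definition of $\wt h$, and show that everything else fits the framework of Lemma \ref{L:F-1}. Using Lemma \ref{L:Stream-F}, \eqref{E:Gc}, and $\BFG(x,y) = -\tfrac{1}{2\pi}\log|x-y| + \tilde g(x,y)$, together with $\omega = \sum_k \omega_k$, I would split
\[
\Psi(x) = \mu_j \wt h(\beta_j)(x) + \Psi_j^{\mathrm{ext}}(x),
\]
where the self-interaction $\mu_j \wt h(\beta_j)(x) = \tfrac{1}{2\pi}\int_{\Omega_j}\log|x-y|\,\omega_j(y)\,d\mu_y$ is recognized directly from the definition of $\wt h$, and
\begin{align*}
\Psi_j^{\mathrm{ext}}(x) = & \tfrac{1}{2\pi}\sum_{k\ne j}\int_{\Omega_k}\log|x-y|\,\omega_k(y)\,d\mu_y - \sum_{k=1}^N\int_{\Omega_k}\tilde g(x,y)\,\omega_k(y)\,d\mu_y \\
& - \sum_{l,m=1}^n \CN^{lm}\CH_l(x)\sum_{k=1}^N\int_{\Omega_k}\CH_m(y)\,\omega_k(y)\,d\mu_y + \sum_{l=1}^n c_l\CH_l(x)
\end{align*}
collects only pairings of $\omega_k$ against functions that are smooth near $\Omega_j$. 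Applying $\p_\theta$ to $\Psi\circ\Gamma_j(e^{i\theta})$, the self-interaction piece immediately produces $\mu_j h(\beta_j)(\theta)$ by the definition of $h$ in Lemma \ref{L:smooth-2}, so it remains to show $\p_\theta \Psi_j^{\mathrm{ext}}(\Gamma_j) = r_j R_j(X,\beta,\vr)$ with the claimed regularity.

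The $\tilde g$-integrals fit the form \eqref{E:F-1} directly with $f = \tilde g \in C^\infty(\Omega\times\Omega)$, and Lemma \ref{L:F-1} supplies both the $r_j$-factor and the smoothness of $F$ and $\CD_{(X,\beta)} F$ on $\Sigma_\rho\times B_{(X^s)^N,R_s}\times\overline{D_R}$. The cross-interaction log-integrals with $k\ne j$ are not covered verbatim since $\log|x-y|$ is singular on the diagonal, and this is the key technical step: choose $R$ sufficiently small relative to $\rho$ so that $\Omega_j \subset B_{\rho/4}(x_j)$ throughout $\Sigma_\rho\times B_{(X^s)^N,R_s}\times\overline{D_R}$; then replace $\log|x-y|$ by $\chi(x,y)\log|x-y|$ with $\chi \in C^\infty(\Omega\times\Omega)$ supported in $\{|x-y|>\rho/3\}$ and identically $1$ on $\{|x-y|>\rho/2\}$. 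The modified $f$ is smooth on $\Omega\times\Omega$ and agrees with $\log|x-y|$ on the relevant supports, so Lemma \ref{L:F-1} applies. Finally, the harmonic-function terms $\CH_l(\Gamma_j(e^{i\theta}))$ are handled by hand, since $\p_\theta \CH_l(\Gamma_j) = r_j a_1^j\,(ie^{i\theta}(1+\p_z\wt\Gamma_j))\cdot\nabla\CH_l(\Gamma_j)$ manifestly factors out $r_j$, and the scalar prefactors $\sum_k\int_{\Omega_k}\CH_m\omega_k\,d\mu$ are clearly smooth in $(X,\beta,\vr)\in \Sigma_\rho\times B_{(X^s)^N,R_s}\times\overline{D_R}$. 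Summing these contributions yields $R_j$ and the estimate on $\CD_{(X,\beta)} R_j$ with the claimed range $s' \in [1-s,s]$ directly from the corresponding statements in Lemma \ref{L:F-1}.

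To verify \eqref{E:Rj}, pass to $\vr = 0$: Lemma \ref{L:F-1} gives $\int_{\Omega_k}f(\cdot,y)\omega_k(y)\,d\mu_y \to \mu_k f(\cdot, x_k)$ for any smooth $f$, so
\[
\Psi_j^{\mathrm{ext}}|_{\vr=0}(x) = \tfrac{1}{2\pi}\sum_{k\ne j}\mu_k\log|x-x_k| - \sum_k\mu_k\tilde g(x,x_k) - \sum_{l,m}\CN^{lm}\CH_l(x)\sum_k\mu_k\CH_m(x_k) + \sum_l c_l\CH_l(x),
\]
which is smooth at $x_j$, and $R_j(X,\beta,0) = a_1^j\,(ie^{i\theta}(1+\p_z\wt\Gamma_j(e^{i\theta})))\cdot\nabla_x\Psi_j^{\mathrm{ext}}|_{\vr=0}(x_j)$. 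A direct computation using $\nabla_1\BFG_0(x,y) = -\tfrac{1}{2\pi}\tfrac{x-y}{|x-y|^2} + \nabla_1\tilde g(x,y) + \sum_{l,m}\CN^{lm}\nabla\CH_l(x)\CH_m(y)$ and $\nabla g(x) = \nabla_1\tilde g(x,x) + \sum_{l,m}\CN^{lm}\nabla\CH_l(x)\CH_m(x)$ (using the symmetries $\tilde g(x,y)=\tilde g(y,x)$ and $\CN^{lm}=\CN^{ml}$), together with the formula \eqref{E:E0} for $H_{\vec\CC}$, gives $\nabla_x\Psi_j^{\mathrm{ext}}|_{\vr=0}(x_j) = -\mu_j^{-1}\nabla_{x_j}H_{\vec\CC}(X)$, which is exactly \eqref{E:Rj}. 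The main obstacle is the cutoff argument for the cross-log integrals in paragraph two: one must verify that the choice of $R$ is uniform over the entire parameter domain so that the smooth truncation of $\log|x-y|$ is genuinely equal to the kernel on every relevant support pair.
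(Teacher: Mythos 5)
Your proposal is correct and follows essentially the same route as the paper's proof: isolate the self-interaction logarithmic term as $\mu_j h(\beta_j)$, handle the harmonic terms $\CH_l\circ\Gamma_j$ by hand to extract the explicit factor $r_j$, feed the remaining smooth-kernel integrals into Lemma \ref{L:F-1}, and then pass to $\vr=0$ and use the symmetries of $\tilde g$, $\BFG_0$, $\CN^{lm}$ to identify $-\nabla_{x_j}H_{\vec\CC}(X)$. The only cosmetic difference is that the paper groups the cross-interaction terms directly as $\BFG_0(x,y)$ restricted to $x\in\overline{\Omega_j}$, $y\in\overline{\Omega_{j'}}$ with $j'\ne j$ (relying implicitly on the same separation $|\vr|\ll\rho$ that your explicit cutoff makes precise), so your truncation of $\log|x-y|$ is a valid and slightly more careful rendering of the same step.
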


\begin{proof}
Firstly, due to the harmonicity and thus the smoothness of $\CH_l(x)$ in $\Omega$, clearly 
\be \label{E:smooth-temp-1}
r_j^{-1} \p_\theta \CH_l \big( \Gamma_j( e^{i\theta}) \big)  = a_1^j \big((\nabla \CH_l) \big(\Gamma_j(e^{i\theta})\big)\big) \cdot \big( ie^{i\theta} (1+ \p_z \wt \Gamma_j (e^{i\theta}) )\big)   \in \dot H^{s-1} (S^1)
\ee
and thus it depends on $\vec{r} \in \overline{D_R}$, $X$, and $\beta$ smoothly. Moreover the variation of above formula with respect to $\beta$ is clearly a bounded linear operator from $(X^{s'})^N$ to $\dot H^{s'-1} (S^1)$ with coefficients depending on $(X, \beta, \vec{r}) \in \Sigma_\rho  \times B_{(X^s)^N, R_s} \times \overline{D_R}$ smoothly. 

According to Lemma \ref{L:Stream-F}, the other terms in the definition of $\phi_j$ are all in the form of \eqref{E:F-1} with $f(x, y)$ given by $\BFG_0(x, y)$ for $x \in \overline{\Omega_j}\ne \overline{\Omega_{j'}} \ni y$, $\CH_{l_1} (x) \CH_{l_2}(y)$, and $\tilde g(x, y)$. Therefore Lemmas \ref{L:smooth-2} and \ref{L:F-1} imply the regularity of $R_j$ and its smooth dependence on $(X, \beta, \vec{r}) \in \Sigma_\rho  \times B_{(X^s)^N, R_s} \times \overline{D_R}$. 

Finally $\Gamma_m (z) =x_m$ for all $1\le m\le N$ if $\vec{r}=0$. Lemma \ref{L:F-1} and \eqref{E:smooth-temp-1} imply 
\begin{align*}
R_j (X,  \beta, 0) = &a_1^j \big(i e^{i\theta}(1+ \p_z \wt \Gamma_j (e^{i\theta}) )\big)\cdot  \Big(\sum_{l=1}^n c_l \nabla \CH_{l} (x_j)  - \sum_{ j'\ne j, 1\le j'\le N} \mu_{j'}  \nabla_1 \BFG_0 (x_j, x_{j'}) \\
&- \mu_{j} \nabla_1 \tilde g(x_j, x_j)  
- \sum_{l_1, l_2=1}^n  \CN^{l_1l_2} \mu_j \nabla \CH_{l_1} (x_j) \CH_{l_2} (x_j) \Big). 
\end{align*} 
The symmetry of $\tilde g$, $\BFG_0$, and $\CN^{l_1, l_2}$ implies that the big parentheses on the above right side is equal to $-  \nabla_{x_j} H_{\vec{\CC}} (X)$ and this completes the proof of the lemma. 
\end{proof}

With the above preparations, we are ready to prove Theorem \ref{T:VP} on steady concentrated vortex patches. Let
\be \label{E:CF-VP-1} 
\CF (X, \beta, \vec{r}) = (\CF_1, \ldots, \CF_N), \quad \;  
\ee
where 
\[
\CF_j(X, \beta, \vr) =  r_j^{-1} \phi_j (X, M_{\vr} \beta, \vec{r}), 
\quad(M_{\vr})_{N\times N}= diag (r_1, \ldots, r_N).
\]
The different scalings in $X$ and $\beta$ are due to the degeneracy of $\CD_X \phi_j|_{\vec{r}=0}$. 
In the rest of the section (and also in the next section), $\wt \Gamma_j$ in the definition of $\phi_j(X, M_{\vr} \beta, \vec{r})$ is defined as in \eqref{E:CM-2-2}, but by $r_j \beta_j$. Namely Re$\wt \Gamma_j|_{S^1} = r_j \beta_j$. The conformal mapping $\Gamma_j$ and $\Omega_j = \Gamma_j(B_1)$ are defined accordingly. Due to the different scaling in $\CF$, we split the target space $\big(\dot H^s(S^1)\big)^N$ into the direct sum $ Y_1 \oplus (Y^s)^N $ of two subspaces orthogonal with respect to $L^2 (S^1)$ where 
\be \label {E:Ys-1} 
Y_1 =  (\BFR^N \{\cos \theta\}) \oplus (\BFR^N\{\sin\theta\}) \sim \BFR^{2N}, \quad Y^s =  \{\cos\theta, \sin \theta\}^{\perp_{L^2(S^1)}} \subset \dot H^s (S^1)
\ee
and accordingly split vector-valued functions in $\big(\dot H^s(S^1)\big)^N$  
\[
(f_1, \ldots, f_N) =\big( (\cos \theta, \sin \theta) y_1, \ldots, (\cos \theta, \sin \theta) y_N \big) +  (\wt f_1, \ldots, \wt f_N), \quad y = (y_1^T, \ldots, y_N^T)^T \in \BFR^{2N}. 
\]
Lemma \ref{L:smooth-3} implies 

\begin{lemma} \label{L:DCF-2}
$\CF \in C^\infty \big(\Sigma_\rho  \times B_{(X^s)^N, R_s} \times \overline{D_R}, \dot H^{s-1}(S^1)\big)$. Moreover, 
$\CD_\beta \CF (X, \beta, 0)$ is isomorphic from $(X^s)^N$ to $(Y^{s-1})^N$ and $\CD_X \CF(X, \beta, 0)(\BFR^{2N})\subset Y_1$ and  
\be \label{E:DCF-1} \begin{split}
&\CD_\beta \CF (X, \beta, 0) \alpha = (\mu_1 \CD h(0) \alpha_1, \ldots,  \mu_N\CD h(0) \alpha_N),  \\
& \CD_X \CF(X,\beta, 0) \wt X = \big( (\cos \theta, \sin \theta) y_1, \ldots, (\cos \theta, \sin \theta) y_N \big),
\end{split}\ee
where 
\[
y = (y_1^T, \ldots, y_N^T)^T = \Lambda^{-1} J_N D^2 H_{\vec{\CC}} (X) \wt X
\]
and $\Lambda$ and $J_N$ are defined in \eqref{E:symplectic-1}. 
\end{lemma}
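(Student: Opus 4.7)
The plan is to use the decomposition of $\phi_j$ from Lemma \ref{L:smooth-3}. Substituting $\phi_j(X, \eta, \vr) = r_j R_j(X, \eta, \vr) + \mu_j h(\eta_j)$ with $\eta = M_{\vr}\beta$ yields
\[
\CF_j(X, \beta, \vr) = R_j(X, M_{\vr} \beta, \vr) + \mu_j r_j^{-1} h(r_j \beta_j).
\]
Smoothness of the first summand is immediate from Lemma \ref{L:smooth-3}. For the second, since $h(0) = 0$, Hadamard's lemma gives $r_j^{-1} h(r_j \beta_j) = \int_0^1 \CD h(t r_j \beta_j) \beta_j \, dt$, which extends smoothly across $r_j = 0$ with value $\CD h(0) \beta_j$. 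The weaker-topology statement on $\CD \CF$ in $X^{s'}$ follows from the corresponding properties in Lemmas \ref{L:smooth-2} and \ref{L:smooth-3}.

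Next I would evaluate the two derivatives at $\vr = 0$. Since $M_0 = 0$, the first summand at $\vr = 0$ equals $R_j(X, 0, 0)$, so its $\beta$-derivative vanishes; hence $\CD_\beta \CF_j(X, \beta, 0)\alpha = \mu_j \CD h(0)\alpha_j$ comes entirely from the Hadamard expansion of the second summand. For $\CD_X \CF_j$, only $R_j$ contributes. Formula \eqref{E:Rj} with $a_1^j = 1$ and $\wt\Gamma_j \equiv 0$ (since its second argument is $0$) reduces to $R_j(X, 0, 0) = -\mu_j^{-1}(i e^{i\theta}) \cdot \nabla_{x_j} H_{\vec{\CC}}(X)$, so
\[
\CD_X \CF_j(X, \beta, 0)\wt X = -\mu_j^{-1}(i e^{i\theta}) \cdot \bigl(D^2 H_{\vec{\CC}}(X) \wt X\bigr)_j,
\]
a linear combination of $\cos\theta$ and $\sin\theta$, hence in $Y_1$. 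Matching with $(\cos\theta, \sin\theta) y_j$ reduces to the elementary identity $-(ie^{i\theta}) \cdot w = (\cos\theta, \sin\theta)(Jw)$ for $w \in \BFR^2 \sim \BFC$ under the convention of the Notations, which combined with $\Lambda^{-1} J_N = \mathrm{diag}(\mu_j^{-1} J)$ gives exactly $y_j = \mu_j^{-1} J (D^2 H_{\vec{\CC}}(X) \wt X)_j$.

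For the isomorphism claim, since $\mu_j \ne 0$ and the map is block-diagonal, it suffices to show that $\CD h(0) : X^s \to Y^{s-1}$ is a topological isomorphism. By Lemma \ref{L:smooth-2}, $\CD h(0)$ sends the input Fourier coefficients at mode $m \ge 3$ (the defining modes of $X^s$) to output coefficients at mode $m - 1 \ge 2$ (exactly the defining modes of $Y^{s-1}$), scaled by $(m-2)/(2\pi)$ and combined with the swap of cosine and sine. This is a bijection at the level of Fourier sequences, and since $(m-1)^{2(s-1)}(m-2)^2 \sim m^{2s}$ as $m \to \infty$, it is also norm-equivalent between $X^s$ and $Y^{s-1}$. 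The only slightly delicate point in the whole plan is the $\BFC \leftrightarrow \BFR^2$ bookkeeping in the Hamiltonian matrix identification; all other steps are direct consequences of the preceding lemmas.
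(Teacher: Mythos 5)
Your proposal is correct and follows exactly the route the paper intends: the paper states this lemma as an immediate consequence of Lemma \ref{L:smooth-3} (together with Lemmas \ref{L:smooth-2} and formula \eqref{E:Rj}) without writing out details, and your argument supplies precisely those details — the decomposition $\CF_j = R_j(X, M_{\vr}\beta,\vr) + \mu_j r_j^{-1}h(r_j\beta_j)$, the Hadamard-lemma extension across $r_j=0$, the vanishing of the $\beta$-derivative of the $R_j$ term via $M_0=0$, the identity $-(ie^{i\theta})\cdot w = (\cos\theta,\sin\theta)(Jw)$, and the Fourier-mode shift $m\mapsto m-1$ with nonzero factor $(m-2)/(2\pi)$ giving the isomorphism $X^s\to Y^{s-1}$. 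All steps check out.
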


Therefore we have 

\begin{corollary} \label{C:CF-zero-1}
It holds that $\CF(X_*, 0, 0)=0$ and $\big(\CD_{(X, \beta)} \CF(X_*, 0, 0)\big)^{-1} \in L\big( \big(\dot H^{s-1}(S^1)\big)^N, \BFR^{2N} \times (X^s)^N \big)$ if and only if $X_*= (x_1^*, \ldots, x_N^*)$ is a non-degenerate critical point of $H_{\vec{\CC}}(X)$.  
\end{corollary}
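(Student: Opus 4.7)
The plan is to read the corollary off almost directly from the preceding two lemmas. There are three things to check: that $\CF(X_*, 0, 0) = 0$ iff $X_*$ is a critical point of $H_{\vec{\CC}}$; that $\CD_{(X,\beta)}\CF(X_*, 0, 0)$ is block-diagonal with respect to the splitting $\big(\dot H^{s-1}(S^1)\big)^N = Y_1 \oplus (Y^{s-1})^N$; and that each block is invertible exactly when the corresponding non-degeneracy holds.

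First I would compute $\CF(X_*, 0, 0)$ using Lemma \ref{L:smooth-3}. Writing
\[
\CF_j(X, \beta, \vec{r}) = R_j(X, M_{\vec{r}}\beta, \vec{r}) + r_j^{-1}\mu_j h(r_j \beta_j),
\]
I evaluate at $(X_*, 0, 0)$: the second term is zero since $h(0)=0$ by Lemma \ref{L:smooth-2}, and the first reduces, via \eqref{E:Rj} with $a_1^j = 1$ and $\wt\Gamma_j \equiv 0$ at $\beta = 0$, to $R_j(X_*, 0, 0) = -\mu_j^{-1}(ie^{i\theta})\cdot \nabla_{x_j} H_{\vec{\CC}}(X_*)$. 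Identifying $ie^{i\theta}$ with $(-\sin\theta, \cos\theta)\in\BFR^2$, this is a pure $\{\cos\theta,\sin\theta\}$-combination whose coefficients recover the two components of $\nabla_{x_j} H_{\vec{\CC}}(X_*)$; hence $\CF(X_*, 0, 0)=0$ for every $j$ iff $\nabla H_{\vec{\CC}}(X_*) = 0$.

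Next I would invoke Lemma \ref{L:DCF-2}. The image of $\CD_X\CF(X_*, 0, 0)$ lies in $Y_1$ and the image of $\CD_\beta\CF(X_*, 0, 0)$ lies in $(Y^{s-1})^N$, with $Y_1$ and $(Y^{s-1})^N$ being $L^2$-orthogonal complements inside $\big(\dot H^{s-1}(S^1)\big)^N$. So the full differential decomposes block-diagonally and its invertibility reduces to invertibility of the two blocks separately. The $\beta$-block $(\alpha_j)_j \mapsto (\mu_j \CD h(0)\alpha_j)_j$ is already an isomorphism from $(X^s)^N$ onto $(Y^{s-1})^N$ by that lemma. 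The $X$-block is, under the canonical identification $\BFR^{2N} \cong Y_1$ via the $\{\cos\theta,\sin\theta\}$-basis, the map $\wt X \mapsto \Lambda^{-1} J_N D^2 H_{\vec{\CC}}(X_*)\wt X$; since $\Lambda$ and $J_N$ are invertible, this block is an isomorphism iff $D^2 H_{\vec{\CC}}(X_*)$ is. Combining the two statements gives the stated equivalence.

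Since the substance of the argument has been packaged into Lemmas \ref{L:smooth-2}, \ref{L:smooth-3}, and \ref{L:DCF-2}, there is no genuine obstacle. The only step requiring care is the bookkeeping in passing between the complex expression $ie^{i\theta}\cdot \nabla_{x_j} H_{\vec{\CC}}$ and its $(\cos\theta,\sin\theta)$-coordinates on $Y_1$, so that the condition ``$X_*$ non-degenerate critical'' matches precisely with invertibility of $\Lambda^{-1} J_N D^2 H_{\vec{\CC}}(X_*)$ as a map on $\BFR^{2N}$.
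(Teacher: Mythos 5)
Your proposal is correct and follows exactly the route the paper intends: the paper derives this corollary as an immediate consequence of Lemmas \ref{L:smooth-2}, \ref{L:smooth-3}, and \ref{L:DCF-2} (stating it with ``Therefore we have'' and no written proof), and your computation of $\CF(X_*,0,0)$ via \eqref{E:Rj} together with the block-diagonal splitting $Y_1\oplus (Y^{s-1})^N$ is precisely the implicit argument. The bookkeeping identifying $ie^{i\theta}\cdot\nabla_{x_j}H_{\vec{\CC}}$ with the $(\cos\theta,\sin\theta)$-coordinates on $Y_1$ is handled correctly.
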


\noindent {\it Proof of Theorem \ref{T:VP}.} Following from the Implicit Function Theorem, for $|\vec{r}|<<1$, there exist $\big(X(\vec{r}), \beta(\vec{r})\big) \in \Sigma_\rho \times B_{(X^s)^N, R_s}$, which are $C^\infty$ in $\vec{r}$, such that 
\[
\CF \big(X(\vec{r}), \beta(\vec{r}), \vec{r} \big) =0, \qquad X(0) = X_*, \; \; \beta(0)=0.
\]
The smoothness of $\beta(\vec{r})$ yields $\beta(\vec{r})=O(|\vec{r}|)$. The corresponding vortical domains $\Omega_j$, $1\le j\le N$, are determined by \eqref{E:CM-2-1} where $\wt \Gamma_j$ satisfies Re$\wt \Gamma_j |_{S^1} = r_j \beta_j (\vec{r}) = O(|\vec{r}|r_j)$. Therefore $r_j^{-1} \p \Omega_j$ is an $O(r_j |\vec{r}|)$ perturbation to $S^1$ in $H^s$ topology. This argument can be carried out for any $s>\frac 32$. Due to the local uniqueness of the solutions, sufficiently small solutions belong to $\Sigma_\rho \times (X^s)^N$  for any $s>\frac 32$ and thus are independent of $s>\frac 32$ for small $\vr$. This completes the proof of Theorem \ref{T:VP}. 
\hfill $\square$

\vspace{.05in} The properties that $r_j^{-1} \p \Omega_j$ is an $O(r_j |\vec{r}|)$ perturbation to $S^1$ will be important in the spectral analysis in Section \ref{S:stability}. For $\vr = \ep\vr_0$ for some fixed $\vr_0$, the same property is also claimed in Corollary 1 in \cite{Wan88}, however, it is not very clear to us how this is obtained in the argument there. The direct application of the Implicit Function Theorem usually yields one order less in $\ep$. 

\begin{remark} \label{R:shape}
Using \eqref{E:F}, one may compute 
\[
\big(\CD_{\vr} \CF_j (X_*, 0, 0) \vr \big) (\theta) = \big(\CD_{\vr} R_j (X_*, 0, 0)\vr\big) (\theta) = \langle \vec{A}, \vr \rangle \cos 2\theta + \langle \vec{B}, \vr \rangle \sin 2\theta
\]
for some $\vec{A}, \vec{B}\in \BFR^N$. From the Implicit Function Theorem and Lemmas \ref{L:DCF-2} and \ref{L:smooth-2}, 
\begin{align*}
\big(\CD_{\vr} \beta_j (0) \vr \big) (\theta)= & -\Big( \big(\CD_\beta \CF_j (X_*, 0, 0)\big)^{-1} \CD_{\vr} \CF_j (X_*, 0, 0) \vr \Big) (\theta) \\
=& 2\pi \mu_j^{-1} ( \langle \vec{B}, \vr\rangle \cos 3\theta - \langle \vec{A}, \vr\rangle \sin 3\theta).
\end{align*}
Therefore we have from \eqref{E:CM-2-2}
\[
\Gamma_j (\vr)(z)= x_j (\vr) + a_1^j r_j ( z + r_j \langle \vec{Q}, \vr\rangle z^3\rangle) + O(r_j^2 |\vr|^2), \quad a_1^j = 1+ O(|r_j \vr|),
\] 
for some $\vec{Q} \in \BFC^N$. This means that $(a_1^jr_j)^{-1} \Omega_j$ is roughly the image of $B_1$ under a mapping $z+ O(|r_j\vr|) z^3$, which is basically like an ellipses. To see this, consider the image of $S^1$ under $z+\ep^2 z^3$ for small $\ep$. One only needs to notice that the boundary is parametrized by $R = 1 + 2\ep^2 \cos 2\theta +O(\ep^4)$, which satisfies $(\frac x{1+2\ep^2})^2 + (\frac y{1-2\ep^2})^2 =1 +O(\ep^4)$. 
\end{remark}

\section{Spectral stability of concentrated steady vortex patches} \label{S:stability} 

In this section, we shall study the linearized vortex patch dynamics at the steady states $\big(X_*(\vec{r}), M_{\vr} \beta_* (\vec{r})\big)$, $\vr \in \overline{D_R}$, found in Theorem \ref{T:VP}, where 
\[
X_*=(x_{*1}, \ldots, x_{*N}) (\vec{r}), \quad \beta_* (\vec{r}) = (\beta_{*1}, \ldots, \beta_{*N}) (\vec{r}) \in (X^s)^N, \;  \forall s, \quad \CF\big( X_*(\vec{r}),  \beta_* (\vec{r}), \vec{r}\big)=0, 
\]
and we shall follow the same notations as in Section \ref{S:VP} and Subsection \ref{SS:Ham-VP}. In this section, let $\wt \Gamma_{*j}(\vec{r})$ be defined as in \eqref{E:CM-2-2}, but by $r_j \beta_{*j} (\vec{r})$. Namely Re$\wt \Gamma_{*j} (\vec{r})|_{S^1} = r_j \beta_{*j} (\vec{r})$. The vorticity $\omega_*$, stream function $\Psi_* (\vr)$, the conformal mappings $\Gamma_{*j} (\vec{r})$, and $\Omega_{*j} (\vec{r}) = \Gamma_{*j} (\vec{r}) (B_1)$ are defined accordingly. For notational simplicity, we sometimes skip their $\vr$ dependence in this section. 

Let $A(\vr)$ denote the linearized operator of the vortex patch dynamics  at $\big(X_*(\vr), \beta_*(\vr) \big)$. Namely the linearized equation of \eqref{E:Euler-VP-CM} takes the form 
\be \label{E:LVP}
\p_t (X, \beta) = A(\vr) (X, \beta). 
\ee
According to \eqref{E:Euler-VP-CM}, $A(\vr)$ takes the form 
\be \label{E:Avr} \begin{split}
&A(\vr) = ( A_1, \ldots, A_N) (\vr), \quad A_j (\vr) \in L\big((\BFR^2 \times X^s)^N,  \BFR^2 \times X^{s-1}\big)\\
&A_j(\vr) (X, \beta) = Q\big(r_j, r_j \beta_{*j}(\vr)\big) \CD_{(X, \beta)} \phi_j \big( X_*(\vr), M_{\vr} \beta_*(\vr), \vr \big) (X, \beta) 
\end{split} \ee
where the boundedness of $A_j(\vr)$ is due to Lemmas \ref{L:Qbdd} and \ref{L:smooth-3}. In particular here we also used $\phi_j \big( X_*(\vr), r\beta_*(\vr), \vr \big)=0$ so the term involving the differentiation of $Q$ vanishes. The spectral stability of the steady $\omega_*$ corresponding to $\big( X_*(\vr), r\beta_*(\vr)\big)$ is determined by the spectrum $\sigma\big(A(\vr)\big)$ and the linear stability by $e^{t A(\vr)}$.

The linear operator $A(\vr)$ inherit an Hamiltonian structure from the nonlinear Hamiltonian one \eqref{E:VP-Ham-1}. Due to the complicated form of the symplectic operator $J_p$ in \eqref{E:VP-Ham-1}, instead of the variations 
\[
X = ( x_1, \ldots,  x_N), \quad \beta= ( \beta_1, \ldots, \beta_N), 
\]
we consider 
\begin{subequations} \label{E:conj-1}
\be \label{E:conj-1a} 
(X, \beta) = \BFQ(\vr) (y, \alpha), \quad (y, \alpha) = \big( (y_1^T, \ldots, y_N^T)^T,  (\alpha_1, \ldots, \alpha_N) \big) \in   \BFR^{2N} \times (Y^s)^N,
\ee
where 
\be \label{E:conj-1b} 
(x_j, \beta_j) = r_j Q\big(r_j, r_j \beta_{*j} (\vr)\big) f_j, \; \;  f_j = (\cos \theta, \sin \theta) y_j + \alpha_j,\;\; \alpha_j \in Y^s, \;\; j=1, \ldots, N. 
\ee
\end{subequations}
Here the splitting \eqref{E:Ys-1} of $\dot H^s$ is used. In this splitting we denote the corresponding projections  by
\[
\Pi_0: \big(\dot H^s (S^1) \big)^N \to \BFR^{2N}, \quad \Pi_j: \big(\dot H^s (S^1) \big)^N \to Y^s, \quad j=1, \ldots, N
\]
which result in the $y$ and $\alpha_j$ components. They also have bounded left inverses $\Pi_j^{-1}$, $0\le j \le N$. The following notation $Q_0 \in L(Y^s, X^s)$ defined based on Lemma \ref{L:Qbdd} will be used for convenience 
\be \label{E:Q0}
Q(1, 0) (\wt y_1 \cos \theta + \wt y_2 \sin \theta +  \wt \alpha) = (\wt y, Q_0 \wt \alpha), \quad \forall\ \wt y = (\wt y_1, \wt y_2)^T \in \BFR^2, \; \wt \alpha \in Y^s. 
\ee
According to Lemma \ref{L:Qbdd}, for any small $\vr$, $(X, \beta) \in \BFR^{2N} \times (X^s)^N $ and $(y, \alpha) \in \BFR^{2N} \times (Y^s)^N$ are isomorphic, so we shall consider the following conjugate operator of $A(\vr)$ 
\be \label{E:TAvr} \begin{split}
& \wt A(\vr) = \BFQ(\vr) ^{-1}A(\vr) \BFQ(\vr) =  (\wt A_0, \ldots, \wt A_N) (\vr) \\
&\wt A_j(\vr) (y, \alpha) = \Pi_j \CD_{(X, \beta)} ( r_1^{-1} \phi_1, \ldots, r_N^{-1} \phi_N)_{( X_*, M_{\vr} \beta_*, \vr)} \BFQ (y, \alpha).
\end{split} \ee
Based on the analysis in Section \ref{S:VP}, we shall first obtain the decomposition in Theorem \ref{T:spectrum} of $\BFR^{2N} \times \big(Y^s(S^1)\big)^N$ invariant under $\wt A(\vr)$. In the following lemma we prove that the principle part of $\wt A(\vr)$ is given by  
\be \label{E:B0} \begin{split}
&\big(B_0 (\vr)\big)_{2N \times 2N}  = \Lambda^{-1} J_N D^2 H_{\vec{\CC}} \big( X_*(\vr) \big), \\
& B_Y (\vr) = diag \big( \mu_1 r_1^{-2} \CD h(0) Q_0, \ldots,  \mu_N r_N^{-2} \CD h(0) Q_0\big) \\
&\qquad \;\,\; =\Lambda M_{\vr}^{-2}\CD h(0) Q_0 \in L\big( (Y^s)^N, (Y^{s-1})^N \big), 
\end{split} \ee
where $\Lambda$ and $J_N$ are defined in \eqref{E:symplectic-1} and $h$ in \eqref{E:h-1}. With a slight abuse of notations, when applied to $\BFR^{2N}$,  $M_{\vr}$ in the following also denotes
\be \label{E:Mr-1}
(M_{\vr})_{2N \times 2N} = diag (r_1 I_{2\times 2}, \ldots, r_N I_{2\times 2}).
\ee 

\begin{lemma} \label{L:TAr-1}
For any $s$, there exists $C>0$ such that, in the decomposition $\BFR^{2N} \oplus (Y^s)^N$, $\wt A(\vr)$ takes the form of 
\[
\wt A (\vr) = \begin{pmatrix} B_0 (\vr) + \wt B_0 (\vr)  & \wt B_{0Y} (\vr) \\ 
\wt B_{Y0} (\vr) & \wt B_Y (\vr)+ B_Y (\vr) \end{pmatrix} \in L\big(\BFR^{2N} \oplus (Y^s)^N, \BFR^{2N} \oplus (Y^{s-1})^N\big)
\]
and the corresponding norms satisfy 
\begin{align*}
& |\wt B_0 |+ |\wt B_{0Y} M_{\vr} |  + |\wt B_Y M_{\vr}| + | M_{\vr} \wt B_{Y0}|  \le C |\vr|.
\end{align*}
\end{lemma}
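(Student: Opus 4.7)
The plan is to expand $\wt A(\vr)$ by plugging the decomposition $r_j^{-1}\phi_j = R_j + \mu_j r_j^{-1} h(\beta_j)$ from Lemma \ref{L:smooth-3} together with the explicit form of $\BFQ$ from \eqref{E:conj-1} into \eqref{E:TAvr}, and then read off the four blocks after projecting with $\Pi_0$ and with each $\Pi_j$, $j\ge 1$. First I would write $\BFQ(\vr) = \BFQ_0 + \BFQ_1(\vr)$, where $\BFQ_0(y,\alpha) = (y, M_\vr^{-1} Q_0 \alpha)$ is the value at $\vr=0$ coming from $Q(1,0)$ via \eqref{E:Q0}. The smoothness of $Q(1,\cdot)$ asserted in Lemma \ref{L:Qbdd}, together with $\beta_*(\vr)=O(|\vr|)$, gives $Q(1, r_j\beta_{*j}(\vr)) - Q(1,0) = O(r_j|\vr|)$, and after the $\mathrm{diag}(1, r_j^{-1})$ weighting built into $\BFQ$ one obtains $|[\BFQ_1(\vr)(y,\alpha)]_{x_j}| = O(r_j|\vr|)(|y_j|+|\alpha_j|)$ and $|[\BFQ_1(\vr)(y,\alpha)]_{\beta_j}|_{X^s} = O(|\vr|)(|y_j|+|\alpha_j|)$.

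\smallskip

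The two principal blocks then arise from different sources. The $h$-piece contributes $\mu_j r_j^{-1} \CD h(r_j\beta_{*j}(\vr))\,[\BFQ(y,\alpha)]_{\beta_j}$, whose leading $\BFQ_0$ part is $\mu_j r_j^{-2} \CD h(0) Q_0 \alpha_j$, exactly the $j$-th diagonal entry of $B_Y(\vr)$; because $\CD h(0)$ maps into $Y^{s-1}$ by Lemma \ref{L:smooth-2}, this term contributes only to the $(Y,Y)$ block. The $B_0(\vr)$ block comes from $\CD_X R$ applied to the leading $y$-part of $\BFQ_0$, for which the calculation already carried out in Lemma \ref{L:DCF-2} gives $\Pi_0 \CD_X R(X_*(\vr),0,0)y = \Lambda^{-1} J_N D^2 H_{\vec{\CC}}(X_*(\vr)) y = B_0(\vr)y$. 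The error bounds rest on two exact vanishing identities at the base point $(X_*,0,0)$: by \eqref{E:Rj}, each $R_j(X,0,0)$ is proportional to $ie^{i\theta}\cdot \nabla_{x_j} H_{\vec{\CC}}(X)$ and thus lies entirely in the $\cos\theta,\sin\theta$ modes, so $\Pi_k \CD_X R|_{(X_*,0,0)} = 0$ for every $k\ge 1$; and because $X_*$ is a critical point, $\nabla_{x_j} H_{\vec{\CC}}(X_*) = 0$ forces $\CD_\beta R_j|_{(X_*,0,0)} = 0$ for each $j$ (the only possibly nonzero partial, $\CD_{\beta_j} R_j$, carries exactly the factor $\nabla_{x_j} H_{\vec{\CC}}$).

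\smallskip

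The four error estimates then follow by Taylor-expanding $R_j$, $\CD h$, and $Q(1,\cdot)$ about the base point using Lemmas \ref{L:Qbdd}, \ref{L:smooth-2}, \ref{L:smooth-3}, and tracking the $r_j$ and $|\vr|$ powers. For $\wt B_0$ the only nonvanishing source is the difference between $\CD_X R$ at $(X_*(\vr), M_\vr\beta_*(\vr), \vr)$ and at $(X_*(\vr),0,0)$, which is $O(|\vr|)$ by smoothness; the potential $\CD h$-contribution is killed by $\Pi_0$ since $\CD h$ lands in modes $\ge 2$. For the off-diagonal blocks, the two cancellations above reduce the relevant operators to $O(|\vr|)$ at the base point, so that the $r_j^{-1}$ introduced by $\BFQ_0$ in the $\beta_j$-component (for $\wt B_{0Y}$) or by $\mu_j r_j^{-1}$ in the $h$-term hitting a $\BFQ_1$ correction (for $\wt B_{Y0}$) is absorbed by the companion $M_\vr$, yielding $|\wt B_{0Y} M_\vr|, |M_\vr \wt B_{Y0}| \le C|\vr|$. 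The $(Y,Y)$-correction $\wt B_Y$ collects the remaining pieces: $\CD h(r_j\beta_{*j}(\vr)) - \CD h(0) = O(r_j|\vr|)$ against the $r_j^{-2}$ prefactor yields $O(r_j^{-1}|\vr|)$, absorbed by the right-hand $M_\vr$; the Taylor tail of $\CD_\beta R_j$ contributes $O(|\vr|) r_j^{-1}$, likewise absorbed; and all other cross-terms are higher order.

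\smallskip

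The main obstacle is purely bookkeeping: every block receives contributions from all of $\CD_X R$, $\CD_\beta R$, $\CD h$, and the $\BFQ - \BFQ_0$ correction, and the $r_j$ need not be comparable. The two exact vanishings at $(X_*,0,0)$, both traceable to the critical-point condition for $H_{\vec{\CC}}$, are precisely what cancel the potentially dangerous factors of $r_j^{-1}$ introduced by $\BFQ_0$; without them the off-diagonal blocks would not be small. With those identities in hand, the estimates reduce to a routine Taylor expansion.
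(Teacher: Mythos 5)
Your proposal is correct and follows essentially the same route as the paper's proof: expand $\wt A(\vr)$ using the splitting $r_j^{-1}\phi_j = R_j + \mu_j r_j^{-1} h$ of Lemma \ref{L:smooth-3}, approximate $\BFQ(\vr)$ by $(y,\alpha)\mapsto (y, M_{\vr}^{-1}Q_0\alpha)$ with $O(r_j|\vr|)$ error via Lemma \ref{L:Qbdd}, and control the off-diagonal blocks through the two vanishing facts you identify ($\CD h(0)$ landing in $Y^{s-1}$, and the critical-point condition $\nabla H_{\vec{\CC}}(X_*(0))=0$ killing $\CD_\beta R_j$ at the base point). The bookkeeping of the $r_j$ and $|\vr|$ powers matches the paper's estimates.
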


\begin{proof}
For any $(y, \alpha) \in  \BFR^{2N} \times (Y^s)^N$, let 
$(X, \beta) = \BFQ(\vr) (y, \alpha)$
as defined in \eqref{E:conj-1}. From Lemma \ref{L:Qbdd}, we have the more explicit form  
\[
(x_j, r_j \beta_j) = Q\big(1, r_j \beta_{*j} (\vr) \big)f_j, 
\] 
where $f_j$ is defined in \eqref{E:conj-1} as well. Since $|\beta_*|_{H^s} = O(|\vr|)$ proved in Theorem \ref{T:VP}, we have  from \eqref{E:Q0} and Lemma \ref{L:Qbdd}, 
\be \label{E:temp-1} 
|x_j - y_j| + | r_j \beta_j - Q_0 \alpha_j |_{H^s} \le C |r_j| |\vr| (|y_j| + |\alpha_j|_{H^s}) 
\ee
for some $C>0$ independent of $\vr$. 
From Lemma \ref{L:smooth-3}, 
\begin{align*}
&\CD_{(X, \beta)} \phi_j (X_*, M_{\vr} \beta_*, \vr) \BFQ (y, \alpha) 
=\mu_j  \CD h (r_j \beta_{*j}) \beta_j + r_j \CD_{(X, \beta)} R_j (X_*, M_{\vr} \beta_*, \vr) (X, \beta). 
\end{align*}
Since $|\beta_*|_{H^s} = O(|\vr|)$, 
from \eqref{E:temp-1} and Lemmas \ref{L:smooth-2} and \ref{L:smooth-3} we have 
\[
|\CD h (r_j \beta_{*j}) \beta_j - \CD h(0) \beta_j|_{H^{s-1}} 
\le C |\vr| ( |\alpha_j|_{H^s} + |r_j| |\vr| |y_j|), 
\]
\begin{align*}
& |\CD_{(X, \beta)} R_j (X_*, M_{\vr} \beta_*, \vr) (X, \beta) - \CD_{(X, \beta)} R_j (X_*, M_{\vr} \beta_*, 0) (y, 0)|_{H^{s-1}}\\
\le & C \big( |\vr| ( |X| + |\beta|_{H^{s}}) + |X-y| + |\nabla H_{\vec{\CC}} \big((X_*(\vr)\big)| |\beta|_{H^s}\big) \le  C |\vr| (|y| + |M_{\vr}^{-1} \alpha|_{H^s}),
\end{align*}
where the assumption $\nabla H_{\vec{\CC}} \big((X_*(0)\big)=0$ is used. They imply 
\begin{align*}
&|r_j^{-1} \CD_{(X, \beta)} \phi_j (X_*, M_{\vr} \beta_*, \vr) \BFQ (y, \alpha) - \mu_j r_j^{-1} \CD h(0) \beta_j -\CD_{(X, \beta)} R_j (X_*, 0, 0) (y, 0)|_{H^{s-1}} \\
\le & C |\vr| (|y| + |M_{\vr}^{-1} \alpha|_{H^s}). 
\end{align*}
While Lemma \ref{L:smooth-3} implies 
\[
\CD_{(X, \beta)} R_j (X_*, 0, 0) (y, 0) = \Lambda^{-1}J_N D^2 H_{\vec{\CC}} \big(X_*(\vr)\big) y \perp Y^{s-1}, 
\]
according to Lemma \ref{L:smooth-2}, we have,  for $\beta_j \in X^s$ 
\[
\CD h(0) \beta_j \in Y^{s-1}, \quad |\CD h(0) \beta_j -r_j^{-1}  \CD h(0) Q_0 \alpha_j|_{H^s} \le C |\vr| (|y_j| + |\alpha_j|_{H^s}).
\]
With the above inequalities, we obtain the desired estimates. 
\end{proof} 

The following decomposition lemma is essentially a spectral decomposition. 

\begin{lemma} \label{L:decom-1}
For any $s$, there exist $C>0$, $S_0 (\vr) \in L\big( \BFR^{2N}, (Y^s)^N\big)$, and $S_{Y^s} (\vr) \in L\big((Y^s)^N, \BFR^{2N} \big)$, such that $S_0$ is independent of $s$ and for any $s'>s$, 
\[
S_{Y^{s'}} = S_{Y^s}|_{(Y^{s'})^N}, \quad |M_{\vr}^{-1} S_0|+ |S_{Y^s} M_{\vr}^{-1} | \le C |\vr|, \quad \wt A(\vr) Z_0 \subset Z_0, \quad \wt A(\vr) \big( Z_{Y^s} \cap (H^{s+1})^N \big) \subset Z_{Y^{s}},  
\]
where 
\[
Z_{0, Y^s} = graph (S_{0, Y^s}) \subset \BFR^{2N} \oplus (Y^s)^N.
\]
Moreover, $S_{0}$ and $S_{Y^s}$ are the unique operators satisfying this invariance and 
\[
C |\vr|^3 |M_{\vr}^{-1} S_0| \le 1, \quad C |\vr|^3 |S_{Y^s} M_{\vr}^{-1}| \le 1. 
\]
\end{lemma}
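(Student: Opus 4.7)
The plan is to realize $Z_0$ and $Z_{Y^s}$ as graphs of operators $S_0 \in L(\BFR^{2N}, (Y^s)^N)$ and $S_{Y^s} \in L((Y^s)^N, \BFR^{2N})$, reduce the invariance conditions to Riccati-type equations, and solve these by Banach's fixed point theorem in weighted norms reflecting the scale separation in Lemma~\ref{L:TAr-1}. Writing
\[
\wt A(\vr) = \begin{pmatrix} \mathcal{B}_0 & \wt B_{0Y} \\ \wt B_{Y0} & \mathcal{B}_Y \end{pmatrix}, \qquad \mathcal{B}_0 := B_0 + \wt B_0, \;\; \mathcal{B}_Y := B_Y + \wt B_Y,
\]
the two graph-invariance conditions are equivalent to the Riccati equations
\[
\mathcal{B}_Y S_0 - S_0 \mathcal{B}_0 - S_0 \wt B_{0Y} S_0 + \wt B_{Y0} = 0, \qquad S_{Y^s} \mathcal{B}_Y - \mathcal{B}_0 S_{Y^s} - S_{Y^s} \wt B_{Y0} S_{Y^s} + \wt B_{0Y} = 0,
\]
the second being read on $(Y^{s+1})^N$ because of the derivative loss of $\mathcal{B}_Y$. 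The central structural fact is that the principal part $B_Y = \Lambda M_{\vr}^{-2} \CD h(0) Q_0$ is an isomorphism from $(Y^s)^N$ to $(Y^{s-1})^N$ with bounded inverse $L_0 := Q_0^{-1}(\CD h(0))^{-1}\Lambda^{-1} M_{\vr}^2$ of norm $O(|\vr|^2)$; since $M_{\vr}$ is diagonal and commutes with all function-space operators, $M_{\vr}^{-1} L_0$ and $L_0 M_{\vr}^{-1}$ have norm $O(|\vr|)$ while $M_{\vr}^{-1} L_0 M_{\vr}^{-1}$ remains $O(1)$.

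For the first Riccati equation I would apply $L_0$ from the left to obtain $S_0 = L_0\big(S_0 \mathcal{B}_0 + S_0 \wt B_{0Y} S_0 - \wt B_Y S_0 - \wt B_{Y0}\big)$, then substitute the rescaled unknown $\hat S_0 := M_{\vr}^{-1} S_0$ and left-multiply by $M_{\vr}^{-1}$ to get a fixed-point map $\mathcal{T}_0$ on $L(\BFR^{2N}, (Y^s)^N)$. Combining the estimates of Lemma~\ref{L:TAr-1} with the scaling of $L_0$ described above, a direct computation yields a schematic bound
\[
|\mathcal{T}_0(\hat S_0)| \le C|\vr| + C|\vr|^2 |\hat S_0| + C|\vr|^3 |\hat S_0|^2
\]
and a parallel Lipschitz estimate, so $\mathcal{T}_0$ is a contraction on the ball $\{|\hat S_0| \le 2C|\vr|\}$ once $|\vr|$ is small, and Banach's fixed point theorem delivers the unique $S_0$ with $|M_{\vr}^{-1} S_0| \le C|\vr|$; the factor $C|\vr|^3$ in the quantitative uniqueness statement is read off as the reciprocal of the largest radius on which $\mathcal{T}_0$ remains strictly contractive. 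The equation for $S_{Y^s}$ is handled symmetrically by right-multiplying by $L_0$ and introducing $\hat S_{Y^s} := S_{Y^s} M_{\vr}^{-1}$; the fact that $L_0$ is smoothing is precisely what allows the resulting fixed-point equation to close on $L((Y^s)^N, \BFR^{2N})$ despite the unboundedness of $\mathcal{B}_Y$.

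The $s$-independence of $S_0$ and the consistency $S_{Y^{s'}} = S_{Y^s}|_{(Y^{s'})^N}$ for $s' > s$ both follow from uniqueness of the fixed point: each Riccati map commutes with restriction to higher-regularity subspaces, so any candidate at level $s'$ must agree with the restriction of the level-$s$ solution. The main technical obstacle I anticipate lies in the $S_{Y^s}$ analysis, where the quadratic term $S_{Y^s} \wt B_{Y0} S_{Y^s}$ formally requires evaluating $S_{Y^s}$ on the range of $\wt B_{Y0}$, which a priori sits in $(Y^{s-1})^N$ rather than in the domain $(Y^s)^N$ of $S_{Y^s}$. Closing this derivative gap will depend on absorbing an additional factor of $L_0$ into the quadratic term (equivalently, on the intrinsic smoothing of $\wt B_{Y0}$ inherited from the harmonic and Green-function structure of $R_j$ in Lemma~\ref{L:smooth-3}), so that every composition in the fixed-point scheme lands in the correct space and the bookkeeping of the $M_{\vr}$-weights remains consistent throughout.
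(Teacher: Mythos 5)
Your proposal is correct and follows essentially the same route as the paper: the same block decomposition, the same Riccati equations rescaled by $M_{\vr}$, inversion of $B_Y$ (with $|B_Y^{-1}|=O(|\vr|^2)$ and $M_{\vr}^{-2}B_Y^{-1}=O(1)$) to set up a contraction on the ball $C|\vr|^3|S|\le 1$, and $s$-independence plus the restriction property deduced from uniqueness of the fixed point. The ``derivative gap'' you flag in the quadratic term $S_{Y^s}\wt B_{Y0}S_{Y^s}$ is resolved exactly as in your parenthetical: Lemma \ref{L:TAr-1} holds for every $s$, so the finite-rank operator $\wt B_{Y0}$ maps $\BFR^{2N}$ into $(Y^{s'})^N$ for all $s'$ and the compositions close without further argument.
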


\begin{proof} 
The invariance of the desired $S_0$ is equivalent to 
\begin{align*}
& \qquad \wt B_{Y0} + (\wt B_Y + B_Y) S_0 = S_0 ( B_0 + \wt B_0 + \wt B_{0Y} S_0) \Longleftrightarrow \\
& M_{\vr}^{-1} S_0 = B_Y^{-1} (M_{\vr}^{-1} S_0) \big( B_0 + \wt B_0 + \wt B_{0Y} M_{\vr} (M_{\vr}^{-1}S_0)\big) \\
& \qquad \qquad \qquad - M_{\vr}^{-1} B_Y^{-1} \big( M_{\vr}^{-1} M_{\vr} \wt B_{Y0} + \wt B_Y M_{\vr} (M_{\vr}^{-1}S_0) \big) 
\end{align*}
where $B_Y M_{\vr} = M_{\vr} B_Y$ was used. Since $B_Y \in L\big((Y^s)^N, (Y^{s-1})^N\big)$ is an isomorphism, the estimates in Lemma \ref{L:TAr-1}, which hold for all $s$, imply that the above right side is a contraction of $ S = M_{\vr}^{-1} S_0$ on the set $C |\vr|^3 |S| \le 1$ for some $C>0$. Therefore there exists a unique solution $S_0$ to the above equation such that $|S|\le C |M_{\vr}^{-1} B_Y^{-1} \wt B_{Y0}| \le C|\vr|$. In addition, for any $s>s'$, this $S_0 \in L\big( \BFR^{2N}, (Y^s)^N\big) \subset L\big( \BFR^{2N}, (Y^{s'})^N\big)$ is also the unique fixed point of the above operator equation on $L\big( \BFR^{2N}, (Y^{s'})^N\big)$. Therefore $S_0$ is independent of $s$. 

Similarly, the invariance property of $S_{Y^s}$ is equivalent to 
\begin{align*}
& (B_0 +\wt B_0) S_{Y^s}+ \wt B_{0Y} = S_{Y^s} ( \wt B_{Y0} S_{Y^s} + \wt B_Y + B_Y)\in L\big( (Y^{s+1})^N, \BFR^{2N} \big)\Longleftrightarrow \\
S_{Y^s} M_{\vr}^{-1} =&(B_0 +\wt B_0 - S_{Y^s}M_{\vr}^{-1}  M_{\vr} \wt B_{Y0} ) S_{Y^s} M_{\vr}^{-1} B_Y^{-1} + (\wt B_{0Y} M_{\vr} -   S_{Y^s} M_{\vr}^{-1}  M_{\vr} \wt B_Y M_{\vr})  M_{\vr}^{-2} B_Y^{-1}.
\end{align*}
Much as in the previous case, there exists a solution $S_Y$to the above equation, unique in the class $C |\vr|^3 |S_{Y^s}  M_{\vr}^{-1}| \le 1$,  such that $|S_{Y^s} M_{\vr}^{-1}|\le C |\wt B_{0Y} M_{\vr}^{-1} B_Y^{-1}| \le C|\vr|$. 
\end{proof}

Due to the smallness of $S_0$ and $S_{Y^s}$, clearly we have the decomposition $\BFR^{2N} \oplus (Y^s)^N = Z_0 \oplus Z_{Y^s}$ invariant under $\wt A(\vr)$. Let 
\be \label{E:TAr0}
\wt A^0 (\vr) = \Pi_0 \wt A (I+S_0) \in L(\BFR^{2N}) , \;\; \wt A^Y (\vr) = (I-\Pi_0) \wt A (I+S_{Y^s}) \in L\big( (Y^s)^N, (Y^{s-1})^N \big)
\ee
where the notation $\Pi_0$ originally defined in $L\big( (\dot H^s(S^1))^N, \BFR^{2N}\big)$ is slightly abused to denote the equivalent projection from $ \BFR^{2N} \oplus (Y^s)^N$ to $\BFR^{2N}$. Clearly $\wt A^{0, Y}(\vr)$ are actually $\wt A(\vr)$ restricted on $Z_0$ and $Z_{Y^s}$ which are parametrized by $\BFR^{2N}$ and $(Y^s)^{2N}$, respectively. Using the blockwise decomposition of $\wt A$ as  in Lemma \ref{L:TAr-1}, it is straight forward to compute 
\be \label{E:TAr0-1}
\wt A^0 = B_0 + \wt B_0 + \wt B_{0Y} S_0, \quad \wt A^Y = \wt B_{Y0} S_{Y^s} + \wt B_Y + B_Y. 
\ee
From Lemma \ref{L:TAr-1}, we obtain 
\be \label{E:TAr0-2}
|\wt A^0 - B_0 | \le C|\vr|.
\ee

In this invariant decomposition, $Z_0$ corresponds to small spectrum of $\wt A(\vr)$, where it behaves as a small perturbation of the $2N$-dim linearized point vortex dynamics $B_0$. Therefore if $B_0$ has unstable eigenvalues with positive real parts, then the linearized vortex patch dynamics $A(\vr)$ at $\big(X_*(\vr), \beta(\vr)\big)$ also has unstable eigenvalues. However, the spectral estimate based only on the above lemmas are far from optimal, particular along the $Z_{Y^s}$ subspace, until the Hamiltonian structure is incorporated. In fact, we can write 
\[
\wt A(\vr) = \BFQ(\vr) ^{-1}A(\vr) \BFQ(\vr) = \BFJ  \BFL (\vr)
\]
where for $j=0, \ldots, N$, 
\be \begin{split} \label{E:TAvr-1} 
& \BFJ= diag \big(\BFJ_0  , \frac{1}{\mu_1} \p_\theta, \ldots, \frac{1}{ \mu_N}  \p_\theta\big), \;\; (\BFJ_0)_{2N \times 2N} =- \Lambda^{-1} J_N
, \;\;
\BFL (\vr) = \big( L_0, \ldots, L_N) (\vr), \\
& L_j (\vr) (y, \alpha) =  \Pi_j \p_\theta^{-1} \CD_{(X, \beta)}(\frac {\mu_1}{r_1} \phi_1, \ldots, \frac {\mu_N}{r_N} \phi_N)|_{\big(X_*, \vr \beta_*, \vr\big)} \BFQ (y, \alpha). 
\end{split} \ee
Here $\p_\theta^{-1}: \dot H^{s-1} (S^1) \to \dot H^s(S^1)$ is an isomorphism. 
Clearly from Lemmas \ref{L:Qbdd}, \ref{L:smooth-2}, and \ref{L:smooth-3} we have 
\be \label{E:L-1}
\BFL \in C^\infty \big(D_R, L(\BFR^{2N} \times (Y^s)^N, \BFR^{2N}  \times (Y^s)^N \big), \quad \forall s,
\ee
while $\BFQ$ in $\BFL$ induces singularity as $\vr \to 0$ as indicated in Lemma \ref{L:Qbdd}, however. We have. 

\begin{lemma} \label{L:L-1}
For any $(y, \alpha) \in \BFR^{2N} \times (Y^s)^N$, $\BFL(\vr)$ satisfies 
\[
L_k (\vr) (y, \alpha) = \Pi_k (F_1, \ldots, F_N), \quad k=0, \ldots, N,
\]
where for $j=1, \ldots, N$, 
\begin{align*}
F_j (\theta) =&\mu_j \big( |\p_z \Gamma_{*j}|^{-1} (\nabla_{N_{*j}} \Psi_* ) \circ \Gamma_{*j}  \big) (e^{i\theta}) f_j(\theta) \\
&- \sum_{j'=1}^N \frac {\mu_j \mu_{j'}}{\pi r_j r_{j'}} \int_{S^1} \BFG_0\big(\Gamma_{*j} (e^{i\theta}) , \Gamma_{*j'} (e^{i\theta'})\big)  f_{j'}(\theta') d\theta'.
\end{align*}
Here $N_{*j}$ is the unit outward normal vector of $\p \Omega_{*j}$ and 
$f=(f_1, \ldots, f_N)
\in \big(\dot H^s(S^1)\big)^N$ is defined as in \eqref{E:conj-1}.
Moreover, we have 
\be \label{E:comm}
\p_\theta L_j(\vr) (0, \cdot) - L_j (\vr) (0, \p_\theta \ \cdot) \in L\big( (Y^s)^N, Y^s\big), \quad j=1, \ldots, N,
\ee
and, for any $(y, \alpha), (\wt y, \wt \alpha) \in \BFR^{2N} \times (Y^s)^N$, 
\[
\langle \BFL(\vr) (y, \alpha), (\wt y, \wt \alpha)\rangle = \langle \BFL(\vr) (\wt y, \wt \alpha), (y, \alpha)\rangle, 
\]
where  $\langle \cdot, \cdot\rangle$ the $L^2$ duality pair. 
\end{lemma}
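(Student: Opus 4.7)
The plan is to differentiate $\phi_j=\p_\theta(\Psi\circ\Gamma_j)$ in $(X,\beta)$, apply the steady-state condition $\Psi_*|_{\p\Omega_{*j}}\equiv\mathrm{const}$ (Lemma~\ref{L:steady-VP}), and then unravel the various rescalings built into $\BFQ$ from \eqref{E:conj-1}. The first substantive step is to pin down how $f_k$ is related to the normal velocity of $\p\Omega_{*k}$: unwinding $(x_k,\beta_k)=r_k\,Q(r_k,r_k\beta_{*k})f_k$ using $Q(r_k,\cdot)=\operatorname{diag}(r_k^{-1},r_k^{-2})Q(1,\cdot)$ together with Lemma~\ref{L:Qbdd}, the induced dynamical variation equals $\delta\Gamma_k=r_k\dot\Gamma_k$ with $f_k=\dot\Gamma_k\cdot(z\p_z\Gamma_{*k})|_{S^1}$. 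Combined with $N_{*k}\circ\Gamma_{*k}=|\p_z\Gamma_{*k}|^{-1}z\p_z\Gamma_{*k}$ this gives $\nu_k\circ\Gamma_{*k}=r_k f_k/|\p_z\Gamma_{*k}|$, so that $(\nu_k\circ\Gamma_{*k})|\p_z\Gamma_{*k}|\,d\theta'=r_k f_k\,d\theta'$.

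Next, $\CD\phi_j=\p_\theta[\CD\Psi\circ\Gamma_{*j}+(\nabla\Psi_*)(\Gamma_{*j})\cdot\CD\Gamma_j]$. Since $\nabla\Psi_*|_{\p\Omega_{*j}}$ is purely normal at the steady state, the second summand reduces to $r_j(\nabla_{N_{*j}}\Psi_*)(\Gamma_{*j})f_j/|\p_z\Gamma_{*j}|$. For the first, $\CD\omega=\sum_k(\mu_k/\pi r_k^2)\nu_k\delta_{\p\Omega_{*k}}$ and $\CD\Psi=\Delta_0^{-1}\CD\omega=-\int\BFG_0\,\CD\omega$ produce, after the normal-velocity substitution,
\[
\CD\Psi\circ\Gamma_{*j}=-\sum_k\frac{\mu_k}{\pi r_k}\int_{S^1}\BFG_0(\Gamma_{*j}(e^{i\theta}),\Gamma_{*k}(e^{i\theta'}))f_k(\theta')\,d\theta'.
\]
Acting by $\Pi_k\p_\theta^{-1}(\mu_j/r_j)$ undoes the outer $\p_\theta$ and turns the $r$-factors into $(\mu_j/r_j)\cdot r_j=\mu_j$ and $(\mu_j/r_j)\cdot(\mu_k/r_k)=\mu_j\mu_k/(r_j r_k)$, which is exactly the announced $F_j$.

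For the commutator \eqref{E:comm}, setting $y=0$ gives $f_j=\alpha_j$; integrating by parts in $\theta'$ on $F_j(\cdot\,;\p_\theta\alpha)$ converts $\BFG_0\p_{\theta'}f_k$ into $-\p_{\theta'}\BFG_0\cdot f_k$, and the difference with $\p_\theta F_j$ collapses to
\[
\mu_j(\p_\theta g_j)\alpha_j-\sum_k\frac{\mu_j\mu_k}{\pi r_j r_k}\int_{S^1}(\p_\theta+\p_{\theta'})\BFG_0(\Gamma_{*j},\Gamma_{*k})\alpha_k\,d\theta',
\]
with $g_j=(\nabla_{N_{*j}}\Psi_*)\circ\Gamma_{*j}/|\p_z\Gamma_{*j}|$. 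The point is that $(\p_\theta+\p_{\theta'})\BFG_0(\Gamma_{*j}(e^{i\theta}),\Gamma_{*k}(e^{i\theta'}))$ is smooth across the diagonal $\{j=k,\,\theta=\theta'\}$: in coordinates $s=(\theta+\theta')/2$, $t=(\theta-\theta')/2$, the log-singularity decomposes as $-\tfrac{1}{2\pi}\log(2|t|)+\mathrm{smooth}(s,t)$ and $\p_\theta+\p_{\theta'}=\p_s$ kills the singular piece, while the $\tilde g$ and $\CH_l$ contributions in \eqref{E:GF}--\eqref{E:Gc} are smooth throughout. The commutator is therefore a smoothing integral operator plus multiplication by the smooth function $\p_\theta g_j$, so it maps $(Y^s)^N$ into $Y^s$ uniformly in $\vr\in\overline{D_R}$.

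Symmetry reduces, via the identification $f_j=(\cos\theta,\sin\theta)y_j+\alpha_j$ and its dual $\tilde f_j$, to the invariance of $\sum_j\int_{S^1}F_j\tilde f_j\,d\theta$ under $f\leftrightarrow\tilde f$. The pointwise contribution $\sum_j\int\mu_j g_j f_j\tilde f_j\,d\theta$ is manifestly symmetric; in the double-integral contribution, relabeling $j\leftrightarrow k$, $\theta\leftrightarrow\theta'$ together with $\BFG_0(x,y)=\BFG_0(y,x)$ and the $j\leftrightarrow k$ symmetry of the coefficient $\mu_j\mu_k/(r_j r_k)$ yield the identity. The main technical obstacle I anticipate is the scaling bookkeeping in the first paragraph: the extra factor $r_k$ in the normal velocity $\nu_k$ is essential for arriving at the correct $1/(r_j r_k)$ (rather than $1/(r_j r_k^2)$) in the kernel of $F_j$, and consequently for the crucial identity $\wt A=\BFJ\BFL$ that underlies the Hamiltonian structure.
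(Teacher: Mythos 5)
Your proposal is correct and follows essentially the same route as the paper's proof: differentiate $\Psi\circ\Gamma_j$, use $\Psi_*|_{\p\Omega_{*j}}=\mathrm{const}$ to reduce the transport term to the normal derivative, represent $\CD\Psi$ as a single-layer potential with kernel $\BFG_0$ via the surface measure $\CD\omega$, and get the commutator bound from the smoothness of $(\p_\theta+\p_{\theta'})\BFG_0(\Gamma_{*j},\Gamma_{*j})$ across the diagonal --- which is exactly the paper's kernel $\mathrm{Re}\,\frac{ie^{i\theta}\p_z\Gamma_{*j}(e^{i\theta})-ie^{i\theta'}\p_z\Gamma_{*j}(e^{i\theta'})}{\Gamma_{*j}(e^{i\theta})-\Gamma_{*j}(e^{i\theta'})}$ obtained after integration by parts. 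Your scaling bookkeeping $\nu_k\circ\Gamma_{*k}=r_kf_k/|\p_z\Gamma_{*k}|$ and the resulting $\mu_j\mu_{j'}/(\pi r_jr_{j'})$ coefficient also match the paper exactly.
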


\begin{proof}
To calculate more explicitly the differentiation of $\phi_j$ defined in \eqref{E:phi-VP-1}, for any $(y, \alpha) \in  \BFR^{2N} \times (Y^s)^N$, let $(X, \beta) = \BFQ(\vr) (y, \alpha)$ as defined in \eqref{E:conj-1}  
and $\wt \Gamma_j(\ep)$, $\omega(\ep)$, $\Gamma_j (\ep)$, $\Psi(\ep)$, $\phi_j(\ep)$  be defined by $(X_*(\vr) + \ep X, \vr \beta_* (\vr) + \ep \beta)$. For simplicity of notations, we skip the $\ep$ in the next a few lines, as well as the $\vr$ dependence of $(X_*,\beta_*)$. We have  
\begin{align*}
&\Big(\CD_{(X, \beta)}\phi_j|_{\big(X_*, \vr \beta_*, \vr\big)} \BFQ (y, \alpha) \Big)(\theta) = \p_\theta \frac d{d\ep} \Big( \Psi\big( \Gamma_j (e^{i\theta}) \big)\Big)|_{\ep=0}\\
=& \p_\theta \big( (\p_\ep \Psi)|_{\ep=0} \circ \Gamma_{*j} + (\p_\ep \Gamma_j)|_{\ep=0} \cdot (\nabla \Psi_*) \circ \Gamma_{*j} \big) (e^{i\theta})\\
=& \p_\theta \big( (\p_\ep \Psi)|_{\ep=0} \circ \Gamma_{*j} +  r_j f_j |\p_z \Gamma_{*j}|^{-1}(\nabla_{N_{*j}} \Psi_* ) \circ \Gamma_{*j} \big) (e^{i\theta}),
\end{align*}
where in the last step we used $\Psi_* |_{\p \Omega_{*j} }\equiv const$ and the fact that $r_j f_j |\p_z \Gamma_{*j}|^{-1} $ is equal to the normal component $\nu_j$ of $(\p_\ep \Gamma_j)|_{\ep=0}$ according to the definition of $Q$ in Lemma \ref{L:Qbdd}. Using Lemma \ref{L:Stream-F}, one may compute $(\p_\ep \Psi)|_{\ep=0}$ 
\[
(\p_\ep \Psi)|_{\ep=0} (x) = - \int_\Omega \BFG_0(x, y) (\p_\ep \omega )|_{\ep=0} (y) d\mu_y. 
\]
According to \eqref{E:tangent-p}, $\p_\ep \omega$ is a measure supported in $\p \Omega_j$, 
\[
(\p_\ep \omega )|_{\ep=0} = \sum_{k=1}^N \frac {\mu_k}{\pi r_k^2} \nu_k \delta_{\p \Omega_{*j}},
\]
which along with $\nu_k \circ \Gamma_{*k} =r_k f_k |\p_z \Gamma_{*k}|^{-1} $ implies 
\[
(\p_\ep \Psi)|_{\ep=0} (x) = - \sum_{k=1}^N \frac {\mu_k}{\pi r_k} \int_{S^1} \BFG_0\big(x, \Gamma_{*k} (e^{i\theta'})\big)  f_k(\theta') d\theta'. 
\]
Therefore, 
\begin{align*}
\Big(\CD_{(X, \beta)}\phi_j|_{\big(X_*, \vr \beta_*, \vr\big)} \BFQ (y, \alpha) \Big)(\theta) = \p_\theta \Big(&- \sum_{j'=1}^N \frac {\mu_{j'}}{\pi r_{j'}} \int_{S^1} \BFG_0\big(\Gamma_{*j} (e^{i\theta}), \Gamma_{*j'} (e^{i\theta'})\big)  f_{j'}(\theta') d\theta' \\
& + r_j \big( |\p_z \Gamma_{*j}|^{-1} (\nabla_{N_{*j}} \Psi_* ) \circ \Gamma_{*j}  \big) (e^{i\theta}) f_j(\theta) \Big), 
\end{align*}
which and $\Pi_k \p_\theta^{-1} \p_\theta = \Pi_k$ immediately yield the formula for $L_k(\vr)$ for $0\le k\le N$. 

For any $(y, \alpha), (\wt y, \wt \alpha) \in \BFR^{2N} \times (Y^s)^N$, let $f, \wt f\in \big(\dot H^s(S^1)\big)^N$ be defined as in \eqref{E:conj-1}. From the definition of $\Pi_k$, $k=0, \ldots,, N$, we have 
\begin{align*}
\langle \BFL(\vr) (y, \alpha), (\wt y, \wt \alpha)\rangle = \Sigma_{j=1}^N \langle F_j, \wt f_j\rangle =& \sum_{j=1}^N\int_{S^1}  \mu_j \Big( |\p_z \Gamma_{*j}|^{-1} \big(\nabla_{N_{*j}} \Psi_*   \big) \circ \Gamma_{*j}  \Big) (e^{i\theta}) f_j(\theta) \wt f_j (\theta) d\theta \\
&-\sum_{j, j'=1}^N \frac {\mu_j \mu_{j'}}{\pi r_j r_{j'}} \int_{S^1\times S^1}  \BFG_0\big(\Gamma_{*j} (e^{i\theta}) , \Gamma_{*j'} (e^{i\theta'})\big)  f_{j'}(\theta') \wt f_j(\theta) d\theta' d\theta
\end{align*}
and thus the symmetry of $\BFL(\vr)$ follows. 

To complete the proof of the lemma, we show the  estimate on the commutator $[\p_\theta, L_j]$ for $1\le j \le N$. For any $\alpha \in (Y^s)^N$, let $(f_1, \ldots, f_N)$ be defined as in \eqref{E:conj-1} by $(0, \alpha)$ and 
\[
\wt L_j \alpha = \Pi_j (\wt F_1, \ldots, \wt F_N), 
\quad \wt F_k (\theta) = \frac {\mu_k^2}{\pi r_k^2} \int_{S^1} \log |\Gamma_{*k} (e^{i\theta}) - \Gamma_{*k} (e^{i\theta'})| f_k (\theta') d\theta'.
\]
From the definition \eqref{E:Gc} of $\BFG_0$, it is straight forward to prove that 
\[
\alpha \longrightarrow \p_\theta \big( L_j (0, \alpha) - \wt L_j \alpha\big) - \big( L_j (0, \p_\theta \alpha) - \wt L_j \p_\theta \alpha \big) \in L\big( (Y^s)^N, Y^s\big). 
\]
With slight abuse of the notation $\Pi_j$, one may compute through integration by parts 
\begin{align*}
\wt L_j \p_\theta \alpha = & \frac {\mu_j^2}{\pi r_j^2} \Pi_j \int_{S^1} \log |\Gamma_{*k} (e^{i\theta}) - \Gamma_{*k} (e^{i\theta'})| \p_\theta f_j (\theta') d\theta'\\
= & \frac {\mu_j^2}{\pi r_j^2} \Pi_j \text{Re} \int_{S^1} \frac { i e^{i\theta'} \p_z \Gamma_{*j} (e^{i\theta'})}{\Gamma_{*j} (e^{i\theta}) - \Gamma_{*j} (e^{i\theta'})} f_j (\theta') d\theta'.
\end{align*}
Therefore 
\begin{align*}
\p_\theta \wt L_j \alpha - \wt L_j \p_\theta \alpha = \frac {\mu_j^2}{\pi r_j^2} \Pi_j \text{Re} \int_{S^1} \frac {i e^{i\theta} \p_z \Gamma_{*j} (e^{i\theta}) -i e^{i\theta'} \p_z \Gamma_{*j} (e^{i\theta'})}{\Gamma_{*j} (e^{i\theta}) - \Gamma_{*j} (e^{i\theta'})} f_j (\theta') d\theta'.
\end{align*}
Since the above kernel $\frac { i e^{i\theta} \p_z \Gamma_{*j} (e^{i\theta}) -i e^{i\theta'} \p_z \Gamma_{*j} (e^{i\theta'}) }{\Gamma_{*j} (e^{i\theta}) - \Gamma_{*j} (e^{i\theta'})}$ is a smooth function of $\theta$ and $\theta'$, we obtain the boundedness of above commutator. This completes the proof of the lemma. 
\end{proof}

The above lemma implies that the linearized vortex patch dynamics 
possesses a Hamiltonian structure. We shall further analyze $\BFL(\vr)$ on $L^2$ based on Lemmas \ref{L:TAr-1} and \ref{L:decom-1}. 

\begin{lemma} \label{L:L-2}
There exists $C>0$ such that, for any $(y, \alpha)\in \BFR^{2N} \times (Y^0)^N$,
\begin{align*}
&|\langle \BFL (y, \alpha), (y, \alpha)\rangle + \langle D^2 H_{\vec{\CC}} (X_*)y, y\rangle - \sum_{j=1}^N \sum_{k=2}^\infty  \frac {\mu_j^2}{r_j^2} \frac {k-1}{2k \pi} |\hat \alpha_j (k)|^2| \\
\le & C  |\vr| ( |y|^2 + |y| |M_{\vr}^{-1} \alpha|_{L^2} + |\alpha|_{L^2}  |M_{\vr}^{-1} \alpha|_{L^2}),
\end{align*}
where $\hat \alpha_j(k)$ is the $k$-th Fourier coefficient of $\alpha_j$. 
\end{lemma}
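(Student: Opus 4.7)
My plan is to compute $\langle \BFL(\vr)(y,\alpha), (y,\alpha)\rangle_{L^2}$ directly by pairing the explicit kernel formula for $F_j$ from Lemma~\ref{L:L-1} against $f_j=(\cos\theta,\sin\theta)y_j+\alpha_j$. This inner product is $\sum_j\int F_j f_j\,d\theta$ up to Fourier-mode normalisations, with $F_j$ the sum of a local boundary term $\mu_j|\p_z\Gamma_{*j}|^{-1}(\nabla_{N_{*j}}\Psi_*)\circ\Gamma_{*j}\cdot f_j$ and a nonlocal double-integral involving $\BFG_0$ coupled across all patches. The essential input is $|\beta_{*j}|_{H^s}=O(|\vr|)$ from Theorem~\ref{T:VP}, giving the uniform expansion $\Gamma_{*j}(e^{i\theta})=x_{*j}+r_j e^{i\theta}+O(r_j|\vr|)$ that drives every subsequent expansion.

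For the nonlocal Green's-function term, I split by cross ($j\ne j'$) versus self ($j=j'$) interactions. For the cross piece, Taylor expand $\BFG_0(\Gamma_{*j},\Gamma_{*j'})$ around $(x_{*j},x_{*j'})$: the mean-zero property of $f_j$ kills the zeroth- and first-order Taylor contributions, and the mixed bilinear term $r_jr_{j'}(e^{i\theta}\!\cdot\!\nabla_1)(e^{i\theta'}\!\cdot\!\nabla_2)\BFG_0$ couples only to the $y$-modes, yielding $-\mu_j\mu_{j'}y_j^TD_1D_2\BFG_0(x_{*j},x_{*j'})y_{j'}=-y_j^T D^2_{x_jx_{j'}}H_{\vec{\CC}}(X_*)y_{j'}$ by comparison with \eqref{E:E0}. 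For the self piece, isolate the Newtonian kernel $-\frac{1}{2\pi}\log|\Gamma_{*j}(e^{i\theta})-\Gamma_{*j}(e^{i\theta'})|$; since $\Gamma_{*j}(e^{i\theta})-\Gamma_{*j}(e^{i\theta'})=r_j a_1^j(e^{i\theta}-e^{i\theta'})+O(r_j|\vr|)$, the $\log r_j$ piece annihilates against $\int f_j=0$, and the classical series $-\log|e^{i\theta}-e^{i\theta'}|=\sum_{k\ge 1}\frac{1}{k}\cos k(\theta-\theta')$ with Parseval yields a diagonal quadratic form in the Fourier modes of $f_j$ with coefficient $-\frac{\mu_j^2}{2r_j^2 k}$ on mode $k$. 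The regular remainder $\tilde g+\sum\CN^{lm}\CH_l\CH_m$, Taylor-expanded to second order at $(x_{*j},x_{*j})$, contributes through its mixed derivative coupled to the mode-one $y_j$.

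For the local boundary term I expand the normal derivative of $\Psi_*$ at $\p\Omega_{*j}$ into a self-profile plus an external contribution: the radial stream function of $\omega_{*j}$ produces $(\nabla_{N_{*j}}\Psi_*^{\mathrm{self}})\circ\Gamma_{*j}=\frac{\mu_j}{2\pi r_j}+O(|\vr|/r_j)$, while $\Psi_*^{\mathrm{ext}}(x)=\sum_l c_l\CH_l(x)-\sum_{k\ne j}\mu_k\BFG_0(x,x_{*k})+O(|\vr|)$ has $\nabla\Psi_*^{\mathrm{ext}}(x_{*j})=0$ at leading order due to the critical-point identity $\nabla H_{\vec{\CC}}(X_*(0))=0$ (consistent with \eqref{E:Rj}). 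The leading $\frac{\mu_j^2}{2\pi r_j^2}\int f_j^2\,d\theta$ from Term~1, Parseval-decomposed, exactly cancels the mode-one ($k=1$) piece of the self-singular quadratic form; the remaining $k\ge 2$ modes combine to give the claimed positive contribution $\sum_j\sum_{k\ge 2}\frac{\mu_j^2}{r_j^2}\frac{k-1}{2k\pi}|\hat\alpha_j(k)|^2$, while the second-order external correction $r_j^2\,e^{i\theta}\!\cdot\! D^2\Psi_*^{\mathrm{ext}}(x_{*j})e^{i\theta}$ in Term~1 combined with the regular-$\BFG_0$ mixed derivative in Term~2 self rebuilds the diagonal Hessian block $D^2_{x_jx_j}H_{\vec{\CC}}(X_*)=\mu_j^2 D^2g(x_{*j})-\mu_j\sum_l c_l D^2\CH_l(x_{*j})$.

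The main obstacle is verifying this last reconstruction: checking that $\mu_j^2 D_1D_2(\tilde g+\sum\CN^{lm}\CH_l\CH_m)|_{(x_{*j},x_{*j})}$ together with the Term~1 correction $N_{*j}\cdot D^2\Psi_*^{\mathrm{ext}}(x_{*j}) N_{*j}$ (which via the explicit form of $\Psi_*^{\mathrm{ext}}$ involves $\sum_{k\ne j}\mu_k D_1^2\BFG_0(x_{*j},x_{*k})$ and $\sum_l c_l D^2\CH_l(x_{*j})$) combine, using the algebraic definition $g(x)=\tfrac{1}{2}(\tilde g(x,x)+\sum\CN^{lm}\CH_l(x)\CH_m(x))$ and the symmetries $\tilde g(x,y)=\tilde g(y,x)$, $\CN^{lm}=\CN^{ml}$, to reproduce exactly $D^2_{x_jx_j}H_{\vec{\CC}}(X_*)$; this is a finite-dimensional algebraic identity but several pieces must be tracked in parallel. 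Once verified, all remainder terms come from $O(r_j|\vr|)$ shape perturbations of $\p\Omega_{*j}$, Taylor remainders one order higher in the cross-expansion, and subleading pieces of $\Psi_*^{\mathrm{ext}}$; uniform control follows from Lemmas~\ref{L:smooth-2}, \ref{L:F-1}, and~\ref{L:smooth-3} together with $|\beta_{*j}|_{H^s}=O(|\vr|)$, and Cauchy--Schwarz using the natural scaling $r_j^{-1}|\alpha_j|_{L^2}\le |M_{\vr}^{-1}\alpha|_{L^2}$ delivers the required bound $C|\vr|(|y|^2+|y||M_{\vr}^{-1}\alpha|_{L^2}+|\alpha|_{L^2}|M_{\vr}^{-1}\alpha|_{L^2})$.
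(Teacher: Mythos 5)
Your strategy --- pairing the explicit kernel formula for $F_j$ from Lemma \ref{L:L-1} against $f_j$ and Taylor-expanding term by term --- is genuinely different from the paper's proof, which simply writes $\BFL=\BFJ^{-1}\wt A$ in the block form of Lemma \ref{L:TAr-1}, absorbs all errors at once via the already-proved bounds $|\wt B_0|+|\wt B_{0Y}M_{\vr}|+|\wt B_Y M_{\vr}|+|M_{\vr}\wt B_{Y0}|\le C|\vr|$, and reads off the leading terms from $\Lambda J_N B_0=-D^2H_{\vec\CC}(X_*(\vr))$ and $\p_\theta^{-1}\CD h(0)Q_0$ (equations \eqref{E:B0-1}--\eqref{E:B0-2}). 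The problem is that your route, as described, cannot reach the claimed error bound on the $y$--$y$ block. Take the self-interaction log kernel: writing $\log|\Gamma_{*j}(e^{i\theta})-\Gamma_{*j}(e^{i\theta'})|=\log(r_ja_1^j)+\log|e^{i\theta}-e^{i\theta'}|+\mathrm{Re}\,\log\big(1+\tfrac{\wt\Gamma_{*j}(e^{i\theta})-\wt\Gamma_{*j}(e^{i\theta'})}{e^{i\theta}-e^{i\theta'}}\big)$, the last term contains $\mathrm{Re}\,[c_3\sum_{l=0}^{2}e^{il\theta}e^{i(2-l)\theta'}]$ with $c_3=O(r_j|\vr|)$ the $z^3$ coefficient of $\wt\Gamma_{*j}$ (nonzero generically by Remark \ref{R:shape}); the $l=1$ piece pairs mode one of $f_j$ with mode one of $f_j$ and, after the prefactor $\mu_j^2/(\pi r_j^2)$, produces a contribution of size $\mu_j^2|\vr|r_j^{-1}|y_j|^2$. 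The non-radial $O(r_j|\vr|)$ correction to $\nabla_{N_{*j}}\Psi_*^{\mathrm{self}}$ in your Term~1 produces a term of exactly the same size. Since $r_j\le|\vr|$, each of these individually exceeds the allowed $C|\vr|\,|y|^2$ (dramatically so when the $r_j$ are of different orders, which the paper explicitly permits). They cancel only because the self-interaction energy is translation invariant and the mode-one perturbation $(\cos\theta,\sin\theta)y_j$ is, up to an $O(r_j|\vr|\,|y_j|)$ shape correction, an infinitesimal translation of $\Omega_{*j}$; your "uniform control follows from Lemmas \ref{L:smooth-2}, \ref{L:F-1}, and \ref{L:smooth-3}" step does not see this cancellation, and bounding each remainder separately fails. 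The paper's coordinates $(X,\beta)=\BFQ(y,\alpha)$ together with the decomposition $\phi_j=r_jR_j+\mu_jh(\beta_j)$ (with $h$ depending only on the shape $\beta_j$) are precisely the device that builds this cancellation in before any estimate is made.

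A second, smaller issue: you flag the reconstruction of the diagonal Hessian block $D^2_{x_jx_j}H_{\vec\CC}(X_*)$ from the Term-1 external Hessian plus the mixed derivative of $\tilde g+\sum\CN^{lm}\CH_l\CH_m$ as ``the main obstacle'' but do not carry it out. That identity does hold (it is essentially one more $X$-derivative of the computation in Lemma \ref{L:smooth-3} identifying $R_j(X,\beta,0)$ with $-\mu_j^{-1}a_1^j(ie^{i\theta}(1+\p_z\wt\Gamma_j))\cdot\nabla_{x_j}H_{\vec\CC}(X)$), so rather than re-deriving it from the kernel you should cite Lemmas \ref{L:smooth-3} and \ref{L:DCF-2}, which is exactly what the paper's $B_0$ encodes. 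To repair the proof along your lines you would either have to group Term~1 with the self part of Term~2 and exploit the translation invariance explicitly before estimating, or --- more efficiently --- abandon the direct kernel computation and follow the paper in reducing everything to the block estimates of Lemma \ref{L:TAr-1}.
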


\begin{proof}
Since the symmetric $\BFL (\vr) = \BFJ^{-1} \wt A (\vr)$, it can be written as 
\[
\BFL (\vr) = \begin{pmatrix} \Lambda J_N (B_0 + \wt B_0)   & \Lambda J_N  \wt B_{0Y} \\ 
\Lambda \p_\theta^{-1} \wt B_{Y0} & \Lambda \p_\theta^{-1} (\wt B_Y + B_Y) \end{pmatrix} \in L\big(\BFR^{2N} \oplus (Y^s)^N \big).
\]
From Lemma \ref{L:TAr-1}, we have 
\begin{align*}
& |\langle \BFL (y, \alpha), (y, \alpha)\rangle - \langle \Lambda J_N B_0y, y\rangle - \langle \Lambda \p_\theta^{-1} B_Y \alpha, \alpha\rangle| \\
= & | \langle \Lambda J_N \wt B_0y, y\rangle + \langle \Lambda J_N  \wt B_{0Y} M_{\vr} M_{\vr}^{-1} \alpha, y \rangle + \langle \Lambda \p_\theta^{-1} M_{\vr} \wt B_{Y0} y, M_{\vr}^{-1}\alpha \rangle + \langle \Lambda \p_\theta^{-1} \wt B_Y M_{\vr} M_{\vr}^{-1} \alpha, \alpha \rangle | \\
\le & C |\vr| ( |y|^2 + |y| |M_{\vr}^{-1} \alpha|_{L^2} + |\alpha|_{L^2}  |M_{\vr}^{-1} \alpha|_{L^2}).
\end{align*}
From \eqref{E:B0}, we have 
\be \label{E:B0-1}
\Lambda J_N B_0 = - D^2 H_{\vec{\CC}} \big(X_*(\vr)\big), \quad \Lambda \p_\theta^{-1} B_Y = \Lambda^2 M_{\vr}^{-2} \p_\theta^{-1} \CD h(0) Q_0. 
\ee
Direct computations based on \eqref{E:Q0} and Lemma \ref{L:Qbdd} and \ref{L:smooth-2} yield 
\be \label{E:B0-2}
\p_\theta^{-1} \CD h(0) Q_0 \sum_{k=2}^\infty (A_k \cos k \theta + B_k \sin k\theta) =  \sum_{k=2}^\infty \frac {k-1}{2k\pi} (A_k \cos k \theta + B_k \sin k\theta)
\ee
and the desired estimate follows. 
\end{proof}

Along with Lemma \ref{L:decom-1}, we immediately obtain the uniform positivity of $\BFL$ on $Z_{Y^0}$. 

\begin{corollary} \label{C:posi}
There exists $\delta>0$ such that, for any $(y, \alpha)\in Z_{Y^0}$, 
\[
\langle \BFL (y, \alpha), (y, \alpha)\rangle \ge \delta | M_{\vr}^{-1} \alpha|_{L^2}^2. 
\]
\end{corollary}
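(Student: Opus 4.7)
The plan is to combine Lemma \ref{L:L-2}, which gives the leading quadratic form of $\BFL(\vr)$ up to $O(|\vr|)$ mixed errors, with the invariance constraint from Lemma \ref{L:decom-1}, which forces $y = S_{Y^0}\alpha$ to be very small compared to $|M_{\vr}^{-1}\alpha|_{L^2}$. The positive definite piece $\sum_{j,k\ge 2}\frac{\mu_j^2}{r_j^2}\frac{k-1}{2k\pi}|\hat\alpha_j(k)|^2$ in Lemma \ref{L:L-2} diverges like $|\vr|^{-2}$, so it will dominate all other contributions in the regime $|\vr|\ll 1$.

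First I would record the elementary scaling inequalities on $(Y^0)^N$: componentwise $r_j^2|\alpha_j|_{L^2}^2 \le |\vr|^2|\alpha_j|_{L^2}^2$ and $|\alpha_j|_{L^2}^2 \le |\vr|^2 r_j^{-2}|\alpha_j|_{L^2}^2$, which upon summing yield
\begin{equation*}
|M_{\vr}\alpha|_{L^2} \le |\vr|\,|\alpha|_{L^2}, \quad |\alpha|_{L^2} \le |\vr|\,|M_{\vr}^{-1}\alpha|_{L^2}, \quad |M_{\vr}\alpha|_{L^2} \le |\vr|^2|M_{\vr}^{-1}\alpha|_{L^2}.
\end{equation*}
On $Z_{Y^0}$ we have $y = S_{Y^0}\alpha = (S_{Y^0}M_{\vr}^{-1})(M_{\vr}\alpha)$, so the bound $|S_{Y^0}M_{\vr}^{-1}|\le C|\vr|$ from Lemma \ref{L:decom-1} combined with the third inequality above gives $|y| \le C|\vr|^3|M_{\vr}^{-1}\alpha|_{L^2}$. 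The key point is that $|y|$ is three orders smaller in $|\vr|$ than the natural norm $|M_{\vr}^{-1}\alpha|_{L^2}$.

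Next, since $\alpha_j \in Y^0$ contains only Fourier modes with $|k|\ge 2$, the elementary bound $\frac{k-1}{2k\pi}\ge \frac{1}{4\pi}$ gives the coercivity of the main positive term:
\begin{equation*}
\sum_{j=1}^N\sum_{k\ge 2}\frac{\mu_j^2}{r_j^2}\frac{k-1}{2k\pi}|\hat\alpha_j(k)|^2 \ge c_0 \sum_{j=1}^N \frac{|\alpha_j|_{L^2}^2}{r_j^2} = c_0\,|M_{\vr}^{-1}\alpha|_{L^2}^2
\end{equation*}
for some $c_0>0$ depending on $\min_j |\mu_j|$ and the Fourier normalization on $S^1$.

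Finally I would estimate each of the remaining terms in Lemma \ref{L:L-2} using the inequalities from the first step: $|\langle D^2 H_{\vec{\CC}}(X_*)y,y\rangle| \le C|y|^2 \le C|\vr|^6|M_{\vr}^{-1}\alpha|_{L^2}^2$, and the error bundle is bounded by $C|\vr|(|y|^2+|y||M_{\vr}^{-1}\alpha|_{L^2}+|\alpha|_{L^2}|M_{\vr}^{-1}\alpha|_{L^2}) \le C(|\vr|^7+|\vr|^4+|\vr|^2)|M_{\vr}^{-1}\alpha|_{L^2}^2$. Assembling these, $\langle\BFL(\vr)(y,\alpha),(y,\alpha)\rangle \ge (c_0 - C|\vr|^2)|M_{\vr}^{-1}\alpha|_{L^2}^2$, which is $\ge \tfrac{c_0}{2}|M_{\vr}^{-1}\alpha|_{L^2}^2$ after shrinking $R$ in \eqref{E:D-R} if necessary, giving $\delta = c_0/2$. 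There is no real obstacle here, only careful bookkeeping of powers of $|\vr|$; the only substantive observation is that the constraint defining $Z_{Y^0}$ automatically makes the $y$-sector subdominant in the Hamiltonian.
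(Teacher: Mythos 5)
Your proposal is correct and is precisely the argument the paper leaves implicit when it deduces Corollary \ref{C:posi} ``immediately'' from Lemmas \ref{L:L-2} and \ref{L:decom-1}: the graph bound $|S_{Y^0}M_{\vr}^{-1}|\le C|\vr|$ makes $|y|\le C|\vr|^3|M_{\vr}^{-1}\alpha|_{L^2}$ on $Z_{Y^0}$, the coefficient $\frac{k-1}{2k\pi}\ge\frac1{4\pi}$ for $k\ge 2$ gives coercivity of the main term in $|M_{\vr}^{-1}\alpha|_{L^2}^2$, and all remaining terms are $O(|\vr|^2)|M_{\vr}^{-1}\alpha|_{L^2}^2$. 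Your bookkeeping of the powers of $|\vr|$ is accurate throughout.
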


Consequently $\BFL$ has a finite Morse index for $s=0$, namely, on $\BFR^{2N} \times (Y^0)^{2N}$. In fact, the quadratic form $\langle \BFL\cdot, \cdot \rangle$ has only finitely many non-positive directions. There have been extensive results on such linear Hamiltonian operators $\wt A= \BFJ \BFL$, e.g. see in \cite{LZ17}. In particular we have that the $\BFL$-orthogonal complement of $Z_{Y^0}$ in $\BFR^{2N} \times Y^0$ is also invariant under $\wt A$. Due to $Z_{Y^s}$ being close to $\{y=0\}$ and the uniqueness of invariant subspaces near $\{\alpha=0\}$ given in Lemma \ref{L:decom-1},  $Z_0$ must be the $\BFL$-orthogonal complement of $Z_{Y^0}$.

\begin{corollary} 
It holds that $\langle \BFL (y, \alpha), (\wt y, \wt \alpha)\rangle =0$ for any $(y, \alpha) \in Z_0$ and $(\wt y, \wt \alpha) \in Z_{Y^0}$. 
\end{corollary}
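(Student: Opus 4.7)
The plan is to identify $Z_0$ with the $\BFL$-orthogonal complement
\[
Z' := \{w \in \BFR^{2N}\oplus (Y^0)^N \mid \langle \BFL(\vr) w, v\rangle = 0 \text{ for all } v \in Z_{Y^0}\}
\]
by combining the Hamiltonian structure of $\wt A(\vr)$ with the uniqueness asserted in Lemma \ref{L:decom-1}. Once $Z' = Z_0$ is established, the desired identity $\langle \BFL(y,\alpha),(\wt y,\wt\alpha)\rangle = 0$ for $(y,\alpha)\in Z_0$ and $(\wt y,\wt\alpha)\in Z_{Y^0}$ is tautological.

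First I would establish the $\wt A(\vr)$-invariance of $Z'$. The factorization $\wt A = \BFJ\BFL$ together with $\BFL^* = \BFL$ from Lemma \ref{L:L-1} and the anti-self-adjointness $\BFJ^* = -\BFJ$ (immediate from \eqref{E:TAvr-1} and \eqref{E:J-VP}) yields the intertwining identity $\BFL\wt A + \wt A^*\BFL = 0$ on appropriately regular vectors. For $w\in Z'\cap (\BFR^{2N}\oplus (Y^1)^N)$ and $v\in Z_{Y^0}\cap (\BFR^{2N}\oplus (Y^1)^N)$, Lemma \ref{L:decom-1} gives $\wt A v \in Z_{Y^0}$, so
\[
\langle \BFL\wt A w, v\rangle = -\langle \wt A^*\BFL w,v\rangle = -\langle \BFL w, \wt A v\rangle = 0,
\]
where the commutator estimate \eqref{E:comm} is used to legitimize these manipulations. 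A density argument in $Z_{Y^0}$ extends the conclusion to give $\wt A w\in Z'$ on the appropriate domain.

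Next I would verify that $Z'$ is a $2N$-dimensional algebraic complement of $Z_{Y^0}$. The intersection is trivial by Corollary \ref{C:posi}: any $v\in Z'\cap Z_{Y^0}$ satisfies $\langle \BFL v,v\rangle = 0$, forcing $v = 0$. Moreover, Corollary \ref{C:posi} together with the boundedness of $\BFL$ makes $\langle \BFL\cdot,\cdot\rangle$ equivalent on $Z_{Y^0}$ to the inner product induced by $|M_{\vr}^{-1}\cdot|_{L^2}$, so the linear map $w\mapsto \langle \BFL w,\cdot\rangle|_{Z_{Y^0}}$ from the ambient Hilbert space onto $Z_{Y^0}^*$ is surjective. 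Consequently $Z'$ has codimension equal to $\dim Z_{Y^0}$, which gives $\dim Z' = 2N$ and $Z'\oplus Z_{Y^0} = \BFR^{2N}\oplus (Y^0)^N$.

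Finally I would identify $Z' = Z_0$ by invoking the uniqueness half of Lemma \ref{L:decom-1}. Since that lemma produces $Z_{Y^0}$ as the graph of $S_{Y^0}$ with $|S_{Y^0}M_{\vr}^{-1}|\le C|\vr|$, the subspace $Z_{Y^0}$ is close to $\{y=0\}$, so every $2N$-dimensional complement — in particular $Z'$ — must be the graph of some operator $S'_0:\BFR^{2N}\to (Y^0)^N$, and the invariance from the previous step makes $S'_0$ satisfy the same operator equation as $S_0$ in the proof of Lemma \ref{L:decom-1}. The main obstacle I anticipate is verifying that $S'_0$ falls into the uniqueness class $C|\vr|^3 |M_{\vr}^{-1} S'_0|\le 1$. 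I would handle this by writing the defining $\BFL$-orthogonality of $Z'$ in block form (using the block decomposition of $\BFL$ displayed in the proof of Lemma \ref{L:L-2}) as
\[
(L_{YY} + S_{Y^0}^* L_{0Y})\, S'_0 = -(L_{Y0} + S_{Y^0}^* L_{00}),
\]
and inverting $L_{YY}$ through its dominant part $\Lambda^2 M_{\vr}^{-2}\p_\theta^{-1}\CD h(0) Q_0$ from \eqref{E:B0-1}--\eqref{E:B0-2}, whose inverse contributes a factor $M_{\vr}^2$ that absorbs the remaining pieces via the smallness estimates in Lemma \ref{L:TAr-1}. Uniqueness in Lemma \ref{L:decom-1} then forces $S'_0 = S_0$, i.e., $Z' = Z_0$, completing the proof.
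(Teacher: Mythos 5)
Your proposal is correct and follows essentially the same route as the paper: the paper also identifies $Z_0$ with the $\BFL$-orthogonal complement of $Z_{Y^0}$, using its $\wt A$-invariance (cited from the general linear Hamiltonian theory in \cite{LZ17}) together with the uniqueness clause of Lemma \ref{L:decom-1}. Your version simply supplies details the paper leaves implicit — the intertwining identity $\BFL\wt A+\wt A^*\BFL=0$ in place of the citation, and the block-form verification that the complement is a graph lying in the uniqueness class.
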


According to Proposition 12.1 and Lemma 3.1 in \cite{LZ17}, $\wt A$ generates a $C^0$ group $e^{t\wt A}$ on $\BFR^{2N} \times (Y^0)^N$ such that 
\begin{enumerate} 
\item $e^{t\wt A}$  preserves $\langle \BFL \cdot, \cdot \rangle$ and $\langle \BFJ^{-1}\cdot, \cdot\rangle$, namely $(e^{t\wt A})^* \BFL e^{t\wt A} =\BFL$ and $(e^{t\wt A})^* \BFJ^{-1} e^{t\wt A} =\BFJ^{-1}$, where $(\cdot)^*$ denote the adjoint operator on $L^2$; 
\item $e^{t\wt A} (Z_0) = Z_0$ and $e^{t\wt A} (Z_{Y^0}) = Z_{Y^0}$. 
\end{enumerate}

Furthermore we have 

\begin{lemma} \label{L:stability-ZY} 
There exists $C>0$ such that 
\begin{enumerate}
\item $|e^{t\wt A}|_{Z_{Y^0}}| \le C, \quad \forall \  t\in \BFR$.
\item Consequently, $\sigma ( \wt A|_{Z_{Y^0}}) \subset \{ \lambda \in i\BFR\mid |\lambda| \ge C^{-1} |\vr|^{-2}\}$.
\item $\sigma (A) = \sigma(\wt A) = \sigma ( \wt A|_{Z_0} ) \cup \sigma ( \wt A|_{Z_{Y^0}})$ and $|\lambda| \le C$ for any $\lambda \in \sigma ( \wt A|_{Z_0} )$. 
\end{enumerate}
\end{lemma}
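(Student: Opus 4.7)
The strategy is to exploit the linear Hamiltonian structure $\wt A = \BFJ \BFL$ from Lemma \ref{L:L-1} together with the invariant splitting of Lemma \ref{L:decom-1} and the quantitative bounds of Lemma \ref{L:L-2} and Corollary \ref{C:posi}. For part (i), because $e^{t\wt A}$ preserves $\langle \BFL \cdot, \cdot\rangle$ and leaves $Z_{Y^0}$ invariant, any trajectory $v(t) = e^{t\wt A} v_0 \in Z_{Y^0}$ satisfies $\langle \BFL v(t), v(t)\rangle = \langle \BFL v_0, v_0\rangle$. The plan is to squeeze $|v|_{\mathcal X^0}$ between two-sided bounds on $\langle \BFL\cdot,\cdot\rangle|_{Z_{Y^0}}$: Corollary \ref{C:posi} supplies the lower bound $\delta |M_\vr^{-1}\alpha|_{L^2}^2$, while Lemma \ref{L:L-2}, combined with the slaving $y = S_{Y^0}\alpha$ and $|S_{Y^0} M_\vr^{-1}| \le C|\vr|$ from Lemma \ref{L:decom-1}, delivers a matching upper bound of the same order. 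Since $|y| \le C|\vr|^2|\alpha|$ forces $|v|_{\mathcal X^0} \sim |\alpha|_{L^2}$ on $Z_{Y^0}$, conservation translates directly into $|e^{t\wt A}v_0|_{\mathcal X^0} \le C|v_0|_{\mathcal X^0}$, uniformly in $t \in \BFR$ and $\vr$.

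For part (ii), the uniform bound just established forces $\sigma(\wt A|_{Z_{Y^0}}) \subset i\BFR$ via the standard argument representing the resolvent on each open half plane $\{\pm \text{Re}\,\lambda > 0\}$ as a convergent Laplace transform of $e^{\pm t\wt A}|_{Z_{Y^0}}$. The gap $|\lambda| \ge C^{-1}|\vr|^{-2}$ will be read off from the conjugate operator $\wt A^Y = B_Y + \wt B_Y + \wt B_{Y0} S_{Y^0}$ of \eqref{E:TAr0-1}: the dominant piece $B_Y = \Lambda M_\vr^{-2} \CD h(0) Q_0$ diagonalises on Fourier modes via \eqref{E:B0-2} and is invertible with $|B_Y^{-1}| \le C|\vr|^2$, while the perturbations $\wt B_Y + \wt B_{Y0} S_{Y^0}$ are of strictly smaller order once the $M_\vr$ scalings from Lemma \ref{L:TAr-1} and Lemma \ref{L:decom-1} are tracked. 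A Neumann series then inverts $\wt A^Y - \lambda$ for every $|\lambda| < \frac{1}{2}|B_Y^{-1}|^{-1}$.

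Part (iii) is essentially bookkeeping: the similarity $A = \BFQ\wt A\BFQ^{-1}$ through the isomorphism $\BFQ(\vr)$ of Lemma \ref{L:Qbdd} gives $\sigma(A) = \sigma(\wt A)$, and the $\wt A$-invariant direct sum $\BFR^{2N} \times (Y^0)^N = Z_0 \oplus Z_{Y^0}$ with bounded projections (Lemma \ref{L:decom-1}) splits $\sigma(\wt A)$ into the union $\sigma(\wt A|_{Z_0}) \cup \sigma(\wt A|_{Z_{Y^0}})$. The restriction $\wt A|_{Z_0}$ is conjugate to the $2N\times 2N$ matrix $\wt A^0$ of \eqref{E:TAr0-1}, whose norm is controlled by $|B_0| + C|\vr| \le C$ via \eqref{E:TAr0-2}, giving the desired bound on its spectrum.

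The main obstacle is the scale matching in (i): both $\BFL|_{Z_{Y^0}}$ and its inverse blow up like $|\vr|^{\mp 2}$, and one must verify that the geometric $\mathcal X^0$-norm of $v$ falls exactly between them uniformly in $\vr$. The slaving $y = S_{Y^0}\alpha$ with $S_{Y^0}$ of size $|\vr|^2$ is precisely the ingredient that aligns the $M_\vr^{-1}$-weights on both sides of the Hamiltonian inequality; this is also what lets the perturbation estimates of Lemma \ref{L:TAr-1} close the Neumann series argument in (ii) despite $\wt B_Y$ and $\wt B_{Y0}$ having different natural scaling laws.
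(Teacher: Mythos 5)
Your proposal is correct and follows essentially the same route as the paper: conservation and two\--sided coercivity of $\langle\BFL\cdot,\cdot\rangle$ on the invariant subspace $Z_{Y^0}$ for (1), a Neumann\--series perturbation of the anti\--self\--adjoint $B_Y$ for the purely imaginary spectrum and the gap $|\lambda|\ge C^{-1}|\vr|^{-2}$ in (2), and the invariant splitting together with \eqref{E:TAr0-2} for (3). The only presentational difference is that the paper closes the Neumann series for $r_j$ of disparate sizes by explicitly conjugating $\wt A^Y$ with $M_{\vr}$ and estimating $M_{\vr}^{-2}(i\lambda-B_Y)^{-1}$, which is the precise realization of the ``tracking of the $M_{\vr}$ scalings'' you allude to.
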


When the spectra are concerned in the above, $A$, $\wt A$, and $\wt A|_{Z_{Y^0}}$ are considered as (unbounded) closed operators on $\BFR^{2N}\times (X^0)^N$, $\BFR^{2N}\times (Y^0)^N$, and $Z_{Y^0}$ respectively. 

\begin{proof}
Conclusion (3) is a direct consequence of the invariance of $Z_0$ and $Z_{Y^0}$ under $\wt A$ and \eqref{E:TAr0-2}, \eqref{E:B0}. Conclusion (1) also follows from the invariance of $Z_{Y^0}$, the conservation of $\langle \BFL\cdot, \cdot \rangle$, and its positivity on $Z_{Y^0}$ (Corollary \ref{C:posi}), which also implies that $\sigma ( \wt A|_{Z_{Y^0}})$ is purely imaginary. To complete the proof of the lemma, we only need to show the lower bound on spectral points of $\wt A|_{Z_{Y^0}}$. In fact, \eqref{E:B0-1} and \eqref{E:B0-2} imply that $B_Y$ is anti-self-adjoint on $(Y^0)^N$ and there exists $\delta >0$ such that 
\[
|M_{\vr}^{-2}(i\lambda - B_Y)^{-1}  |_{L\big( (Y^0)^N\big)} \le C, \quad \forall \lambda \in \BFR \backslash [-\delta |\vr|^2, \delta |\vr|^2]. 
\]
From \eqref{E:TAr0-1} and $B_Y M_{\vr} = M_{\vr} B_Y$, we have 
\[
i\lambda - M_{\vr} \wt A^Y M_{\vr}^{-1} =\big(I- (M_{\vr} \wt B_{Y0} S_Y M_{\vr}^{-1} + M_{\vr} \wt B_Y M_{\vr} M_{\vr}^{-2}) (i\lambda - B_Y)^{-1}    \big) (i\lambda - B_Y). 
\]
Lemmas \ref{L:TAr-1} and \ref{L:decom-1} and the above resolvent inequality imply 
\[
|(M_{\vr} \wt B_{Y0} S_Y M_{\vr}^{-1} + M_{\vr} \wt B_Y M_{\vr} M_{\vr}^{-2}) (i\lambda - B_Y)^{-1} | \le C |\vr| |M_{\vr}|
\]
and thus we obtain $i\lambda \in \sigma (M_{\vr} \wt A^Y M_{\vr}^{-1}) = \sigma ( \wt A|_{Z_{Y^0}})$. \end{proof}

\begin{remark} \label{R:stability}
The above results imply that, in the $L^2$ metric on $(X, \beta)$, the linearized vortex patch dynamics $e^{tA(\vr)}$ at $(X_*(\vr), \beta_*(\vr)\big)$ is spectrally and linearly stable in the $2N$-codim invariant subspace $\BFQ Z_{Y^0}$ and thus its stability is completely determined by the flow restricted on the $2N$-dim invariant subspace $\BFQ Z_0$. This $2N$-dim linear dynamics $e^{tA(\vr)}|_{\BFQ Z_0}$ is conjugate to $e^{t\wt A^0}$ on $Z_0$ which is a perturbation to $e^{tB_0}$, the linearized point vortex dynamics. In fact, parametrizing $Z_0$ by $\{ \alpha =0\}$, $e^{t\wt A^0}|_{Z_0}$ also has  a Hamiltonian  
\[
\wt H (\vr, y) = \frac 12 \langle (I+S_0)^* \BFL (I+S_0)y, y\rangle, \quad y\in \BFR^{2N},
\]
and an appropriate symplectic structure. Therefore all the  perturbation results on finite dimensional linear Hamiltonian systems apply, including, but not limited to  
\begin{enumerate}
\item If $\pm D^2 H_{\vec{\CC}} \big(X_*(0)\big) >0$, then $e^{tA(\vr)}$ is stable. 
\item If $\sigma \big(B_0 (0) \big) \subset i\BFR \backslash \{0\}$ and, when restricted to the eigenspace of any eigenvalue of $B_0(0)$, $D^2 H_{\vec{\CC}} \big(X_*(0) \big)$ is either positive or negative, then $e^{tA(\vr)}$ is stable.This case is sometimes referred to as strongly linearly stable. 
\item If $\exists \lambda \in \sigma \big(B_0 (0) \big) \backslash  i\BFR$, then $e^{tA(\vr)}$ is unstable and has exponential trichotomy. 
\item If the nonsingular symmetric matrix  $B_0 (0)$ has an odd Morse index, namely, number of negative directions, then it has a positive eigenvalue and thus the above unstable case occurs. 
\end{enumerate}
See, for example, \cite{Ar89, LZ17} for more details for spectral and linear analysis. 
\end{remark}

To end this section, we consider the spectral of the linearized vortex patch dynamics at $(X_*(\vr), \beta_*(\vr)\big)$ in $\BFR^{2N} \times (X^s)$ for $s>0$. In fact we have 

\begin{lemma} \label{L:spectral-s}
When considered as unbounded closed operators on $\BFR^{2N} \times (X^s)^N$, $s\ge 0$, the spectrum $\sigma\big( A(\vr) \big)$ consists of isolated eigenvalues only and is independent of $s$.  
\end{lemma}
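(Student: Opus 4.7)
The plan is to pass to the conjugate operator $\wt A(\vr) = \BFQ(\vr)^{-1} A(\vr) \BFQ(\vr)$ and analyze its two invariant blocks from Lemma~\ref{L:decom-1} separately. Since $\beta_*(\vr) \in \cap_s (X^s)^N$, Lemma~\ref{L:Qbdd} provides that $\BFQ(\vr)$ is a bounded isomorphism from $\BFR^{2N} \times (Y^s)^N$ onto $\BFR^{2N} \times (X^s)^N$ for every $s \ge 0$, so $\sigma(A(\vr)) = \sigma(\wt A(\vr))$ together with the nature of every spectral point. In the invariant splitting $\BFR^{2N} \oplus (Y^s)^N = Z_0 \oplus Z_{Y^s}$ we have $\sigma(\wt A(\vr)) = \sigma(\wt A^0) \cup \sigma(\wt A^Y)$; the finite-dimensional block $\wt A^0 \in L(\BFR^{2N})$ automatically contributes only $2N$ isolated eigenvalues counted with multiplicity, and neither they nor the subspace $Z_0$ depend on $s$.

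The heart of the argument is to treat $\wt A^Y$, viewed as an unbounded closed operator on $(Y^s)^N$ with domain $(Y^{s+1})^N$, as a first-order elliptic operator on $S^1$. Writing $\wt A^Y = B_Y + R$ with $R = \wt B_{Y0} S_{Y^s} + \wt B_Y$ as in~\eqref{E:TAr0-1}, the principal part $B_Y$ is by~\eqref{E:B0} and~\eqref{E:B0-2} a block-diagonal Fourier multiplier of order one with the explicit discrete spectrum $\{\pm i\,\mu_j(k-1)/(2\pi r_j^2) : k \ge 2,\ 1 \le j \le N\}$, bijective from $(Y^{s+1})^N$ onto $(Y^s)^N$ off this set. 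The remainder $R$ is relatively $B_Y$-bounded with small norm by Lemmas~\ref{L:TAr-1} and~\ref{L:decom-1}, so for $|\lambda|$ sufficiently large the Neumann series yields $(\wt A^Y - \lambda)^{-1} = (I + (B_Y-\lambda)^{-1} R)^{-1}(B_Y - \lambda)^{-1}$ mapping $(Y^s)^N$ into $(Y^{s+1})^N$; composition with the compact Rellich embedding $(Y^{s+1})^N \hookrightarrow (Y^s)^N$ makes the resolvent compact on $(Y^s)^N$. The analytic Fredholm theorem then gives that $\sigma(\wt A^Y)$ is a discrete set of isolated eigenvalues of finite algebraic multiplicity.

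Independence of $s$ reduces to an elliptic regularity bootstrap. From Lemma~\ref{L:L-1}, the principal symbol of $\wt A^Y = \BFJ \BFL|_{Z_{Y^s}}$ on the $j$-th block is multiplication by $i\xi\,(|\p_z \Gamma_{*j}|^{-1} \nabla_{N_{*j}} \Psi_*) \circ \Gamma_{*j}$; since our steady patches perturb uniformly vortical disks with stream function $\approx \tfrac{\mu_j}{4\pi r_j^2}(|x-x_{*j}|^2 - r_j^2)$ inside $\Omega_{*j}$, this coefficient is uniformly non-vanishing on $S^1$ for small $\vr$, making $\wt A^Y$ uniformly elliptic of order one. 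Combined with the commutator estimate~\eqref{E:comm}, any $v \in (Y^0)^N$ solving $(\wt A^Y - \lambda)v = 0$ (and analogously any element of a higher Jordan block) may be bootstrapped, one Sobolev derivative at a time, into $\cap_{s\ge 0} (Y^s)^N$. Eigenvalues and their algebraic eigenspaces therefore agree across all $s \ge 0$. The main obstacle is the uniform ellipticity --- i.e., the non-vanishing of $\nabla_{N_{*j}} \Psi_*$ on $\p \Omega_{*j}$ for all small $\vr$ --- which is reduced via the perturbative construction in Theorem~\ref{T:VP} to the explicit circular-patch computation; once ellipticity is in hand, the Fredholm and bootstrap steps are classical.
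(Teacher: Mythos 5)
Your argument is correct and follows essentially the same route as the paper's: conjugate to $\wt A(\vr)=\BFQ(\vr)^{-1}A(\vr)\BFQ(\vr)$, get discreteness of the spectrum from the one-derivative smoothing (hence compactness) of the resolvent governed by the explicit multiplier $B_Y$, and get $s$-independence by bootstrapping eigenfunction/resolvent regularity using the commutator estimate \eqref{E:comm}. The paper phrases the regularity gain directly through $B_Y^{-1}\in L\big((Y^s)^N,(Y^{s+1})^N\big)$ together with the estimates of Lemma \ref{L:TAr-1}, rather than through ellipticity of the variable-coefficient symbol $i\xi\,(|\p_z \Gamma_{*j}|^{-1}\nabla_{N_{*j}}\Psi_*)\circ\Gamma_{*j}$, and treats non-integer $s$ by interpolation, but these are presentational rather than substantive differences.
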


\begin{proof}
The proof is rather standard 
and we shall only sketch it. Let $\sigma_s$ denote the spectrum of $A(\vr)$ on $\BFR^{2N} \times (X^s)^N$. We shall work on the conjugate operator $\wt A(\vr)$. On the one hand, since $B_Y^{-1} \in L \big( (Y^s)^N, (Y^{s+1})^N\big)$ is compact when considered in $L \big( (Y^s)^N\big)$, $\sigma_s$ has only isolated eigenvalues for any $s$ and any eigenfunctions in $\BFR^{2N} \times (Y^0)^N$ automatically belongs to $\BFR^{2N} \times (Y^s)^N$, which implies $\sigma_0 \subset \sigma_s$. On the other hand, let $\lambda \notin \sigma_0$. Apparently $\lambda - \wt A$ is one-to-one on $ \in \BFR^{2N} \times (Y^s)^N$. First consider the case when $s>0$ is an integer. For any $v \in \BFR^{2N} \times (Y^s)^N$, since $\lambda \notin \sigma_0$, we first have $u=(\lambda -A)^{-1} v \in \BFR^{2N} \times (Y^1)^N$. Inductively the commutator estimate \eqref{E:comm} implies $u \in \BFR^{2N} \times (Y^{s+1})^N$. Therefore $\lambda -\wt A: \BFR^{2N} \times (Y^{s+1})^N \to \BFR^{2N} \times (Y^s)^N$ is an isomorphism and thus $\lambda \notin \sigma_s$, which also holds for general $s>0$ through interpolation. This completes the proof. 
\end{proof}

\begin{corollary}
$\wt A (Z_{Y^{s+1}}) = Z_{Y^s}$ for any $s\ge 0$.
\end{corollary}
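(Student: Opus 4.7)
The plan is to establish the two inclusions separately, with the surjectivity half reduced to the invertibility of $\wt A(\vr)$ as an operator between the $H^{s+1}$ and $H^s$ scales.

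The inclusion $\wt A(\vr)(Z_{Y^{s+1}}) \subset Z_{Y^s}$ is essentially already in Lemma \ref{L:decom-1}: since $S_{Y^{s+1}} = S_{Y^s}|_{(Y^{s+1})^N}$, one identifies $Z_{Y^{s+1}} = Z_{Y^s} \cap \big(\BFR^{2N} \oplus (Y^{s+1})^N\big)$, which is exactly the subspace on which the lemma asserts $\wt A$ maps into $Z_{Y^s}$.

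For the reverse inclusion I would first check that $0 \notin \sigma\big(\wt A(\vr)\big)$ for small $\vr$. On $Z_{Y^0}$ this is immediate from Lemma \ref{L:stability-ZY}, which confines the spectrum to $\{\lambda \in i\BFR : |\lambda| \ge C^{-1}|\vr|^{-2}\}$. On $Z_0$ the reduced operator satisfies $|\wt A^0(\vr) - B_0(\vr)| \le C|\vr|$ by \eqref{E:TAr0-2}, while the non-degeneracy of $X_*$ as a critical point of $H_{\vec{\CC}}$ makes $D^2 H_{\vec{\CC}}(X_*)$ nonsingular and hence $B_0 = \Lambda^{-1} J_N D^2 H_{\vec{\CC}}(X_*)$ invertible; a standard perturbation argument then gives invertibility of $\wt A^0(\vr)$ for small $\vr$. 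Thus $0 \notin \sigma_0$, and by Lemma \ref{L:spectral-s} also $0 \notin \sigma_s$ for every $s \ge 0$, so $\wt A(\vr) : \BFR^{2N} \times (Y^{s+1})^N \to \BFR^{2N} \times (Y^s)^N$ is an isomorphism.

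Finally, given any $v \in Z_{Y^s}$, let $u \in \BFR^{2N} \times (Y^{s+1})^N$ be the unique preimage and split $u = u_0 + u_Y$ via $\BFR^{2N} \times (Y^{s+1})^N = Z_0 \oplus Z_{Y^{s+1}}$. Invariance of $Z_0$ and $Z_{Y^s}$ under $\wt A$ gives $\wt A u_0 \in Z_0$ and $\wt A u_Y \in Z_{Y^s}$, and since $v = \wt A u_0 + \wt A u_Y$ already lies in $Z_{Y^s}$, the direct sum $\BFR^{2N} \oplus (Y^s)^N = Z_0 \oplus Z_{Y^s}$ forces $\wt A u_0 = 0$; invertibility of $\wt A^0$ then yields $u_0 = 0$, so $u = u_Y \in Z_{Y^{s+1}}$ and $\wt A u = v$, establishing surjectivity. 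The one step I anticipate needing the most care is the propagation $0 \notin \sigma_0 \Rightarrow 0 \notin \sigma_s$ across regularity scales, but this is precisely the content of Lemma \ref{L:spectral-s} applied at $\lambda = 0$, powered by the commutator bound \eqref{E:comm}; everything else is routine bookkeeping with the invariant splitting.
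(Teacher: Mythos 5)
Your argument is correct, and its two essential ingredients --- the spectral gap $\sigma\big(\wt A|_{Z_{Y^0}}\big)\subset\{\lambda\in i\BFR:\ |\lambda|\ge C^{-1}|\vr|^{-2}\}$ from Lemma \ref{L:stability-ZY} and the regularity bootstrapping via the commutator bound \eqref{E:comm} in the proof of Lemma \ref{L:spectral-s} --- are precisely the ones the paper leaves implicit, since the corollary is stated there without proof. Where you diverge is in routing surjectivity through invertibility of $\wt A(\vr)$ on the \emph{whole} space, which forces you to also invert $\wt A^0(\vr)$ on $Z_0$ and hence to invoke the non-degeneracy of $D^2 H_{\vec{\CC}}(X_*)$ together with \eqref{E:TAr0-2}. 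That step is sound (non-degeneracy is a standing hypothesis, so $B_0$ is invertible and the $O(|\vr|)$ perturbation is harmless), but it is avoidable: since $0\notin\sigma\big(\wt A|_{Z_{Y^0}}\big)$ already, any $v\in Z_{Y^s}\subset Z_{Y^0}$ has a preimage $u\in Z_{Y^1}$ under $\wt A|_{Z_{Y^0}}$, and the bootstrap of Lemma \ref{L:spectral-s} combined with the identity $Z_{Y^{s+1}}=Z_{Y^s}\cap\big(\BFR^{2N}\oplus(Y^{s+1})^N\big)$ (a consequence of $S_{Y^{s+1}}=S_{Y^s}|_{(Y^{s+1})^N}$, which you correctly note) upgrades $u$ to $Z_{Y^{s+1}}$ without ever leaving the invariant subspace. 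The direct route needs no information about $Z_0$ and would prove the corollary even if $B_0$ were singular; your route buys the stronger (and also true) statement that $\wt A(\vr):\BFR^{2N}\times(Y^{s+1})^N\to\BFR^{2N}\times(Y^s)^N$ is an isomorphism, which is more than the corollary asks for.
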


Even though $A(\vr)$ has the same spectra on $\BFR^{2N} \times (X^s)^N$, $s\ge 0$, and $e^{t\wt A(\vr)}|_{Z_{Y^0}}$ is linearly stable, it might happen that it has some algebraic growth on $Z_{Y^s}$, $s>0$, in $H^s$ norm.

\section{Continuous concentrated steady vorticities} \label{S:C0}

In this section, fixing the vorticities $\mu_1, \ldots, \mu_N \in \BFR\backslash\{0\}$ as given in Theorem \ref{T:C0} and $s>\frac 32$, we shall prove Theorem \ref{T:C0} on the existence of $C^{0,1}$ concentrated steady vorticities $\omega = \Sigma_{j=1}^N \omega_j$ satisfying \eqref{E:localized-V} and \eqref{E:localization}. According to Lemma \ref{L:steady-C0}, $\omega$ is steady if, for each $j$, $\omega_j =\Delta \Psi= \wt f(\Psi)$ on $\Omega_j= supp(\omega_j)$ for some $\wt f$, where $\Psi$ is the stream function of $\omega$ given in Lemma \ref{L:Stream-F}. 
Since $\Omega_j$ is close to a disk, we start with the above semilinear elliptic equation on $B_1$. For each $j=1, \ldots, N$, we take $f_j$ such that 
\begin{subequations}  \label{localpatch:gammaassumptions} 
\be 
f_j \in C^\infty (\BFR, \BFR), \; \text{odd},
\qquad  f_j^\prime(0) < 0, \quad f_j (\tau) > 0 \textrm{ on } \BFR^-, 
\ee 
\be 
\exists \; \text{a negative radial solution } \psi_j^* \text{ to } \quad \Delta \psi_j = f_j (\psi_j)  \; \textrm{ in } \; B_1, \qquad \psi_j|_{S^1} =0, 
\label{E:f-2} \ee
and 
\be 
\Delta - f_j^\prime(\psi_j^*) \textrm{ is non-degenerate}. \label{localpatch:nondegen}
\ee \end{subequations}
More specifically, \eqref{localpatch:nondegen} requires that $\Delta - f_j^\prime(\psi_j^*)$, viewed as a self-adjoint operator on $L^2(B_1)$, does not contain $0$ in its spectrum; this is of course a generic condition.  Indeed, the class of $f_j$ satisfying \eqref{localpatch:gammaassumptions} is quite large (cf., e.g., \cite{brezis1997elliptic,figueiredo1982apriori,lions1982positive}). 
As in Section \ref{S:VP}, fix $\rho>0$ satisfying \eqref{E:rho-1} 
Our procedure to obtain Lipschitz concentrated steady vorticity is roughly the following:
\begin{itemize}
\item [{\bf Step 1.}] For any 
\[
X =(x_1, \ldots, x_N) \in \Sigma_\rho, \quad \beta = (\beta_1, \ldots, \beta_N) \in B_{(X^s)^N, R_s},
\quad \vec{r} \in D_R, 
\]
where $X^s$, $R_s$, $D_R$ are defined in \eqref{E:Xs}, \eqref{E:Rs}, and \eqref{E:D-R} and $B_{(X^s)^N, R_s }$ is the ball of radius $R_s$ in $(X^s)^N$, define $\wt \Gamma_j$, $\Gamma_j$, and $\Omega_j = \Gamma_j(B_1)$ as in \eqref{E:CM-2-1}, \eqref{E:CM-2-2}, and \eqref{E:a1j}. 
\item  [{\bf Step 2.}] Through the conformal mapping coordinates $\Gamma_j$, consider the elliptic problem  
\be \label{E:semiLE-1}
\Delta \wt \psi = |1+ \p_z \wt \Gamma_j|^{2} f_j (\wt \psi), \qquad  \wt \psi: B_1 \to \BFR, \quad \wt \psi|_{S^1}=0. 
\ee
By Implicit Function Theorem, there exists a unique solution $\wt \psi_j$ close to $\psi_j^*$ in $H^{s+\frac 32} (B_1)$ topology. Let 
\be \label{E:Vor-j-1}
\psi_j= \frac { \mu_j\wt \psi_j  \circ \Gamma_j^{-1} \chi_{\Gamma_j(B_1)} } {\int_{B_1} 
|1+ \p_z \wt \Gamma_j|^{2}f_j(\wt \psi_j) d\mu}  , \quad \omega_j = \frac {\mu_j \chi_{\Gamma_j(B_1)}f_j (\wt \psi_j  \circ \Gamma_j^{-1})}{ (a_1^jr_j)^2  \int_{B_1} |1+ \p_z \wt \Gamma_j|^{2} f_j(\wt \psi_j) d\mu}.
\ee
Clearly 
\be \label{E:Vor-j-2}
\omega_j =\Delta \psi_j \; \text{ on }\; \Omega_j = \Gamma_j(B_1) =supp(\omega_j), \quad \int_{\Omega_j} \omega_j d\mu=\mu_j.
\ee
\item  [{\bf Step 3.}] Define 
\be \label{E:Vor-1} 
\omega= \sum_{j=1}^N \omega_j, \;\; \text{ and }\; \Psi= (\Delta_0^{-1} \omega) + \sum_{j=1}^n c_j \CH_j \; \text{ as defined by Lemma \ref{L:Stream-F}}.
\ee 
By Lemma  \ref{L:Stream-F}, it holds that $\Delta \Psi = \Delta \psi_j$ on $\Omega_j$, $j=1, \ldots, N$. For $|\vec{r}| << \rho$, the domains $\Omega_j$ are mutually disjoint. So if $\Psi = const$ on each $\p \Omega_j$, then $\Psi-\psi_j=const$ on each $\Omega_j$ and thus by Lemma \ref{L:steady-C0},  $\omega$ is a steady solution to the Euler equation. Therefore consider 
\be \label{E:phi-1}
\phi_j (X, \beta, \vec{r}) (\theta) =\p_\theta \Psi\big( \Gamma_j(e^{i\theta}) \big) 
\ee
Equivalently, we have 

\begin{lemma} \label{L:steady-C0-1}
If $\phi_j (X, \beta, \vec{r}) \equiv 0$ for $j=, 1\ldots, N$, then the corresponding $\omega$ is a steady state of the Euler equation.
\end{lemma}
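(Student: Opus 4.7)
The plan is to verify the hypotheses of Lemma \ref{L:steady-C0} and then invoke it. That lemma requires three things: (i) disjointness and $H^s$ boundary regularity ($s>\frac 32$) of the vortical supports $\Omega_j$; (ii) continuity $\omega \in C^0(\overline\Omega)$; and (iii) a $C^1$ function $\wt f_j$ with $\Delta\Psi = \wt f_j(\Psi)$ on each $\Omega_j$. I would dispatch (i) and (ii) quickly: for $|\vr|\ll \rho$ the $\Omega_j$ are mutually disjoint; their boundaries are images of $S^1$ under the conformal mapping $\Gamma_j$ whose trace inherits $H^s$ regularity from $\beta_j\in X^s$; and $\omega$ is continuous across each $\p\Omega_j$ since $f_j$ odd gives $f_j(0)=0$ together with $\wt\psi_j|_{S^1}=0$, so $\omega_j$ vanishes continuously at $\p\Omega_j$.

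The crux is (iii), for which the key step is to identify $\Psi$ with a constant shift of $\psi_j$ on each $\Omega_j$. By Lemma \ref{L:Stream-F} we have $\Delta \Psi = \omega$ on $\Omega$, while \eqref{E:Vor-j-2} gives $\Delta \psi_j = \omega_j$ on $\Omega_j$; disjointness of the supports then yields $\omega = \omega_j$ on $\Omega_j$, so $\Psi - \psi_j$ is harmonic there. The hypothesis $\phi_j \equiv 0$ means $\Psi\circ \Gamma_j(e^{i\theta})$ is $\theta$-independent, hence $\Psi|_{\p\Omega_j} \equiv \kappa_j$ for some $\kappa_j \in \BFR$, while $\wt\psi_j|_{S^1} = 0$ forces $\psi_j|_{\p\Omega_j} = 0$. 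Applying the maximum principle to the harmonic function $\Psi - \psi_j - \kappa_j$, which vanishes on $\p\Omega_j$, gives $\Psi = \psi_j + \kappa_j$ throughout $\Omega_j$.

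With this identification in hand, the desired $\wt f_j$ can be read off directly from the construction \eqref{E:Vor-j-1}. Writing $K_j := \int_{B_1} |1+\p_z\wt\Gamma_j|^2 f_j(\wt\psi_j)\,d\mu$ and $A_j := (a_1^j r_j)^2$, formula \eqref{E:Vor-j-1} gives $\omega_j = \frac{\mu_j}{A_j K_j}\, f_j\bigl(\frac{K_j}{\mu_j}\psi_j\bigr)$ on $\Omega_j$; substituting $\psi_j = \Psi - \kappa_j$ produces $\Delta\Psi = \wt f_j(\Psi)$ with $\wt f_j(\tau) := \frac{\mu_j}{A_j K_j}\, f_j\bigl(\frac{K_j}{\mu_j}(\tau - \kappa_j)\bigr)$, which is $C^\infty$ since $f_j$ is. Lemma \ref{L:steady-C0} then delivers the conclusion. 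I do not anticipate a genuine obstacle here; the only subtle point is remembering that $\omega_j$ must be expressed as a function of the \emph{global} stream function $\Psi$ rather than of the local $\psi_j$, which is exactly what the shift by $\kappa_j$ accomplishes.
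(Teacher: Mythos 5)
Your proposal is correct and follows essentially the same route as the paper: the paper's argument (embedded in Step 3 preceding the lemma) is precisely that $\Psi-\psi_j$ is harmonic on $\Omega_j$ with constant boundary values, hence constant, after which Lemma \ref{L:steady-C0} applies. Your write-up merely makes explicit the maximum-principle step and the formula $\wt f_j(\tau)=\frac{\mu_j}{A_jK_j}f_j\bigl(\frac{K_j}{\mu_j}(\tau-\kappa_j)\bigr)$, both of which the paper leaves implicit.
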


The above condition will be achieved by choosing proper $X \in \Sigma_\rho$ and $\beta \in (X^s)^N$ through a perturbation argument carried in the rest of the section. \\
\end{itemize}

As in Section \ref{S:VP}, we start with the regularity of  $\phi_j$ and its smoothness in $X$, $\beta$, and $\vec{r}$. 

\begin{lemma} \label{L:smooth-1}
There exists $R>0$ such that 
\[
\phi_j(X, \beta, \vec{r})= r_j R_j (X, \beta, \vec{r}) + \mu_j h_j(\beta_j) 
\]
with 
\[
\big(h_j (\beta_j)\big)(\theta)  =  \frac 1{2\pi} \p_\theta \int_{\Omega_j} \omega_j (y)\log |\Gamma_j (e^{i\theta}) - y|  d\mu_y
\]
satisfying 
\begin{align*}
&h_j \in C^\infty \big(B_{X^s, R_s}, \dot H^{s-1}(S^1)\big), \quad  R_j \in C^\infty \big(\Sigma_\rho  \times B_{(X^s)^N, R_s} \times \overline{D_R}, \dot H^{s-1}(S^1)\big),\\
&h_j(0) =0, \quad R_j(X, \beta, 0)=  \lim_{|\vec{r}| \to +0} R_j (X, \beta, \vec{r}) = 
- \mu_j^{-1} a_1^j\big(i e^{i\theta}(1+ \p_z \wt \Gamma_j (e^{i\theta}) )\big)\cdot  \nabla_{x_j} H_{\vec{\CC}} (X)
\end{align*}
where $H_{\vec{\CC}}$ was defined in \eqref{E:E0}. 
\end{lemma}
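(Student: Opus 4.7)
The plan is to parallel the proof of Lemma \ref{L:smooth-3}, with one additional preliminary step to handle the $\beta_j$-dependence introduced by the semilinear elliptic equation \eqref{E:semiLE-1}. First I would apply the Implicit Function Theorem to \eqref{E:semiLE-1}, viewed as an equation with parameter $\beta_j \in B_{X^s, R_s}$: the linearization at $(\beta_j, \wt \psi_j) = (0, \psi_j^*)$ is $\Delta - f_j'(\psi_j^*)$, which is invertible on $H_0^1(B_1)$ by \eqref{localpatch:nondegen}. Standard elliptic regularity together with the fact that $|1+\p_z \wt \Gamma_j|^2$ depends smoothly on $\beta_j \in X^s$ with values in $H^{s+\frac 12}(B_1)$ then yields a unique solution $\wt \psi_j = \wt \psi_j(\beta_j) \in H^{s+\frac 32}(B_1)$ depending $C^\infty$ on $\beta_j$; consequently the normalizing integral and the density $\omega_j$ in \eqref{E:Vor-j-1} both depend smoothly on $\beta_j$.

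Next I would decompose $\Psi$ via Lemma \ref{L:Stream-F} and the Green function expansion \eqref{E:GF}--\eqref{E:Gc}:
\begin{align*}
\Psi(x) = & \ \frac 1{2\pi} \sum_{k=1}^N \int_{\Omega_k} \log|x-y|\, \omega_k(y)\, d\mu_y - \sum_{k=1}^N \int_{\Omega_k} \tilde g(x,y)\, \omega_k(y)\, d\mu_y \\
& - \sum_{k=1}^N \sum_{l,m=1}^n \CN^{lm} \CH_l(x) \int_{\Omega_k} \CH_m(y)\, \omega_k(y)\, d\mu_y + \sum_{l=1}^n c_l \CH_l(x).
\end{align*}
Pulling back to $S^1$ by $x = \Gamma_j(e^{i\theta})$ and differentiating in $\theta$, the diagonal $k = j$ contribution from the logarithmic sum is precisely $\mu_j h_j(\beta_j)$. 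Every remaining term either involves a kernel smooth on $\overline{\Omega_j} \times \overline{\Omega_k}$ with $k \ne j$ (well separated for $|\vec r| \ll \rho$), the smooth kernel $\tilde g$ on $\overline{\Omega_j}^{\,2}$, or a harmonic function evaluated along $\Gamma_j(S^1)$. In each case $\p_\theta$ acting on $F\circ \Gamma_j$ extracts a factor $r_j a_1^j\bigl(ie^{i\theta}(1+\p_z \wt \Gamma_j)\bigr)\cdot \nabla F$, so these terms combine into $r_j R_j(X, \beta, \vec r)$.

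The smoothness of $R_j$ as an $\dot H^{s-1}(S^1)$-valued map and the existence of its limit as $|\vec r| \to 0$ follow from a direct adaptation of Lemma \ref{L:F-1}, in which the fixed density $\frac{\mu_{j'}}{\pi r_{j'}^2}\chi_{\Omega_{j'}}$ is replaced by the $\beta$-dependent Lipschitz density \eqref{E:Vor-j-1}; the $C^\infty$ dependence of $\wt \psi_{j'}$ on $\beta_{j'}$ established in the first step keeps the parameter dependence clean. Since each $\omega_k$ concentrates to $\mu_k\delta_{x_k}$ as $r_k \to 0$ with total mass $\mu_k$, the computation of the limit $R_j(X,\beta,0)$ reduces to the one carried out in Lemma \ref{L:smooth-3}, and invoking the symmetry of $\tilde g$, $\BFG_0$, and $\CN^{lm}$ reproduces the stated formula involving $-\nabla_{x_j} H_{\vec{\CC}}(X)$. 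The identity $h_j(0) = 0$ is immediate: when $\beta_j = 0$, $\Omega_j = B_{r_j}(x_j)$ and $\wt \psi_j = \psi_j^*$ is radial, so $\omega_j$ is radially symmetric about $x_j$, its logarithmic Newtonian potential is radial, and its tangential derivative along $\p\Omega_j$ vanishes identically.

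The main obstacle I anticipate is the first step: establishing $C^\infty$ dependence of $\wt \psi_j$ on $\beta_j$ in $H^{s+\frac 32}(B_1)$ while carefully tracking how the $\beta_j$-parametrized coefficient $|1+\p_z \wt \Gamma_j|^2$ interacts with the Nemytskii-type nonlinearity $\wt \psi_j \mapsto f_j(\wt \psi_j)$ in the same Sobolev scale (requiring $s > \frac 32$ so that $H^s$ embeds into an algebra). Once this parametric regularity is secured, the decomposition and smoothness arguments reduce to a parameter-dependent version of the analysis already carried out in Section \ref{S:VP} for vortex patches.
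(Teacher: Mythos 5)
Your proposal is correct and follows essentially the same route as the paper: the $C^\infty$ dependence of $\wt\psi_j$ on $\beta_j$ via the Implicit Function Theorem using the non-degeneracy of $\Delta - f_j'(\psi_j^*)$ (the paper cites Lemma B.1 of \cite{SWZ13} for this), the Green-function decomposition isolating the diagonal logarithmic term as $h_j$, the adaptation of the kernel computation of Lemma \ref{L:F-1} to the $\beta$-dependent density for the smoothness of $R_j$ and its limit $-\mu_j^{-1}a_1^j(\cdots)\cdot\nabla_{x_j}H_{\vec{\CC}}(X)$, and $h_j(0)=0$ by radial symmetry. The only point treated more lightly than in the paper is the $C^\infty$ regularity of $h_j$ itself as a $\dot H^{s-1}(S^1)$-valued map (the paper defers this to Lemma B.3 of \cite{SWZ13}, i.e., the Dirichlet-to-Neumann argument of Lemma \ref{L:smooth-2}), but your closing remark correctly identifies that this reduces to a parameter-dependent version of the Section \ref{S:VP} analysis.
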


See Remark \ref{R:smooth} for some discussions on the regularity of $h_j(\beta_j)$. 


\begin{proof}
Much as the calculations in \eqref{E:h-1}, we have 
\begin{align*}
h_j (\beta_j) (\theta) 
=& \frac { \mu_j}{2\pi \int_{B_1} |1+ \p_z \wt \Gamma_j|^{2} f_j(\wt \psi_j) d\mu} \text{Re}\int_{B_1} |1+ \p_z \wt \Gamma_j|^{2} f_j(\wt \psi_j) \frac {  ie^{i\theta}  \big(1+ \p_z \wt \Gamma_j (e^{i\theta}) \big)}{ e^{i\theta} - z +\wt\Gamma_j  (e^{i\theta}) -\wt \Gamma_j(z)} d \mu_z. 
\end{align*}
It is readily clear that $h_j$ indeed depends only on $\beta_j$ and in particular independent of $\vr$. The property $h_j(\beta_j)\in \dot H^{s-1}(S^1)$ and its smooth dependence on  $\beta_j$ follow from Lemma B.3 in  \cite{SWZ13} on which the proof of Lemma \ref{L:smooth-2} was actually based. When $\beta_j=0$, $\Gamma_j(z)= x_j + r_j z$, $\wt \psi_j =\psi_j^*$, and $\Omega_j$ is a disk of radius $r_j$. Therefore $h_j(0)=0$ since $\omega_j$ is radially symmetric. 

The regularity of $R_j$ and its smooth dependence on $(X, \beta, \vr)$ can be proved much as in Lemma \ref{L:smooth-3}. We shall just point out the major modifications. 

By using \eqref{E:semiLE-1}, it is straight forward to prove that the mapping $\beta_j \to \wt \psi_j$ belongs to $C^\infty \big(X^s, H^{s+\frac 32} (B_1)\big)$ (see Lemma B.1 in \cite{SWZ13} for details). For any smooth function $\gamma(x, y)$ defined on $\Omega^2$ and $1\le j, j' \le N$, one may compute 
\begin{align*}
\p_\theta \int_{\Omega_{j'}} \gamma\big( \Gamma_j (e^{i\theta}), y\big) \omega_{j'} (y) d\mu_y =& \frac {a_1^j r_j \mu_{j'}}{\int_{B_1} |1+ \p_z \wt \Gamma_{j'}|^{2} f_{j'}(\wt \psi_{j'}) d\mu} \int_{B_1} |1+ \p_z \wt \Gamma_{j'}|^{2} f_{j'}(\wt \psi_{j'}) \times \\
&\qquad \quad  \big((\nabla_1 \gamma) \big(\Gamma_j (e^{i\theta}), \Gamma_{j'} (z)\big)\big) \cdot \big( ie^{i\theta} (1+ \p_z \wt \Gamma_j (e^{i\theta}) )\big) d\mu_z
\end{align*}
and thus such quantities are valued in $\dot H^{s-1}(S^1)$ and smooth in $(X, \beta, \vr)$. Therefore, the smoothness of $\tilde g(x, y)$ and $\CH_j (x)$ in the definition of $\Psi$, as well as the smoothness of $\log|x-y|$ for $x \in \Omega_j\ne \Omega_{j'} \ni y$, implies the smooth dependence of $R_j$ on $X$, $\beta$, and $\vr$. 
Finally, $\Gamma_j (z) =x_j$ if $r_j=0$,  the above expressions yields the formula of $R_j(X, \beta, 0)$ just as in Lemma \ref{L:smooth-3}.  
\end{proof}

Define $\CF (X, \beta, \vr) = (\CF_1, \ldots, \CF_N) (X, \beta, \vr)$ by exactly the same formula as in \eqref{E:CF-VP-1} and split $\big(\dot H^s(S^1)\big)^N = Y_1 \oplus (Y^s)^N$ orthogonally with respect to $L^2 (S^1)$ as in \eqref{E:Ys-1}.  
The above Lemma implies the following lemma parallel to Lemma \ref{L:DCF-2}. 

\begin{corollary} \label{C:CF}
$\CF \in C^\infty \big(\Sigma_\rho  \times B_{(X^s)^N, R_s} \times \overline{D_R}, \dot H^{s-1}(S^1)\big)$. Moreover, for any $X \in \Sigma_\rho$, $\CD_\beta \CF (X, \beta, 0)$ is isomorphic from $(X^s)^N$ to $(Y^{s-1})^N$ and $\CD_X \CF(X, \beta, 0)(\BFR^{2N})\subset Y_1$ and  
\be \label{E:DCF-2} \begin{split}
&\CD_\beta \CF (X, \beta, 0) \alpha = (\mu_1 \CD h_1(0) \alpha_1, \ldots,  \mu_N\CD h_N(0) \alpha_N),  \\
& \CD_X \CF(X,\beta, 0) \wt X = \big( (\cos \theta, \sin \theta) y_1, \ldots, (\cos \theta, \sin \theta) y_N \big), 
\end{split}\ee
where 
\[
y = (y_1^T, \ldots, y_N^T)^T = \Lambda^{-1} J_N D^2 H_{\vec{\CC}} (X) \wt X
\]
and $\Lambda$ and $J_N$ are defined in \eqref{E:symplectic-1}. 
\end{corollary}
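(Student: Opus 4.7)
The smoothness claim and both derivative identities will all be deduced from Lemma \ref{L:smooth-1} together with the algebraic structure of the rescaling $\CF_j(X,\beta,\vr)=r_j^{-1}\phi_j(X,M_{\vr}\beta,\vr)$. Plugging the decomposition $\phi_j=r_jR_j+\mu_jh_j(\beta_j)$ from Lemma \ref{L:smooth-1} into this definition gives
\[
\CF_j(X,\beta,\vr) \;=\; R_j(X, M_{\vr}\beta, \vr) \;+\; \mu_j r_j^{-1} h_j(r_j\beta_j).
\]
Since $h_j(0)=0$ and $h_j$ is smooth, $r_j^{-1}h_j(r_j\beta_j)$ extends smoothly up to $r_j=0$ with limiting value $\CD h_j(0)\beta_j$; combined with the smoothness of $R_j$ from Lemma \ref{L:smooth-1}, this yields the first assertion $\CF\in C^\infty$ on $\Sigma_\rho\times B_{(X^s)^N,R_s}\times\overline{D_R}$. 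In particular, passing to $\vr=0$,
\[
\CF_j(X,\beta,0) \;=\; R_j(X,0,0) \;+\; \mu_j\CD h_j(0)\beta_j,
\]
so the derivatives at $\vr=0$ decouple cleanly: $\CD_X\CF$ is inherited entirely from $R_j$ (since $h_j$ depends only on $\beta_j$), while $\CD_\beta\CF$ is inherited entirely from the block-diagonal operator with blocks $\mu_j\CD h_j(0)$ (the $R_j$-contribution carries a factor $M_{\vr}$ which kills it at $\vr=0$).

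The formula for $\CD_X\CF(X,\beta,0)$ is then an elementary calculation. Evaluating the explicit expression for $R_j(X,0,0)$ from Lemma \ref{L:smooth-1} at $\beta=0$ (where $a_1^j=1$ and $\wt\Gamma_j\equiv 0$) gives $R_j(X,0,0)(\theta)=-\mu_j^{-1}(ie^{i\theta})\cdot\nabla_{x_j}H_{\vec{\CC}}(X)$. Differentiating in $X$ and using the paper's convention $(ie^{i\theta})\cdot v = -v_1\sin\theta+v_2\cos\theta$ with $v=D_X\nabla_{x_j}H_{\vec{\CC}}(X)\wt X\in\BFR^2$, one reads off the $(\cos\theta,\sin\theta)$-coefficient $y_j=\mu_j^{-1}Jv$, which is exactly the $j$-th block of $\Lambda^{-1}J_N D^2 H_{\vec{\CC}}(X)\wt X$. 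This proves both the stated formula and the inclusion $\CD_X\CF(X,\beta,0)(\BFR^{2N})\subset Y_1$.

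For the $\beta$-derivative, the block-diagonal structure reduces the isomorphism to checking that $\CD h_j(0):X^s\to Y^{s-1}$ is an isomorphism for each $j$. My plan is a Fourier-mode computation parallel to Lemma \ref{L:smooth-2}. At $\beta_j=0$ the weight $W_0(z):=f_j(\psi_j^*)$ inside the integral representation of $h_j$ (as written in the proof of Lemma \ref{L:smooth-1}) is radial; testing against a pure mode $\alpha_j=A\cos l\theta+B\sin l\theta$, $l\ge 3$, i.e.\ $\wt\Gamma_j(z)=(A-iB)z^l$, and exploiting radial integration, one expects the first variation to collapse to a diagonal shift sending the mode $l\ge 3$ of $\alpha_j$ to the mode $l-1\ge 2$ of the output with some weight $\lambda_l^{(j)}$, so that the image automatically lies in $Y^{s-1}$ and, provided each $\lambda_l^{(j)}\ne 0$, the isomorphism follows. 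The new ingredient compared with Lemma \ref{L:smooth-2} is the implicit dependence of $\wt\psi_j$ on $\beta_j$: differentiating the semilinear equation \eqref{E:semiLE-1} at $\beta_j=0$ yields the linearized problem $(\Delta-f_j'(\psi_j^*))\dot\psi_j = \big(\CD|1+\p_z\wt\Gamma_j|^2\cdot\alpha_j\big) f_j(\psi_j^*)$ with zero Dirichlet data on $B_1$, which by the non-degeneracy assumption \eqref{localpatch:nondegen} has a unique solution $\dot\psi_j$, and separation of variables (using that $\psi_j^*$ is radial) shows that a pure-mode input $\alpha_j$ produces a response $\dot\psi_j$ of the same angular mode $l$, preserving the mode-shift structure.

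The main obstacle will be establishing $\lambda_l^{(j)}\ne 0$ for every $l\ge 3$ with the right growth in $l$ so as to yield the bounded isomorphism between $X^s$ and $Y^{s-1}$. Unlike the patch case of Lemma \ref{L:smooth-2}, the radial but non-constant weight $f_j(\psi_j^*)$ prevents a closed-form evaluation of the radial integrals; the non-vanishing and correct order in $l$ of $\lambda_l^{(j)}$ will have to be extracted by combining the sign conditions $f_j'(0)<0$ and $f_j>0$ on $\BFR^-$ from \eqref{localpatch:gammaassumptions} with the non-degeneracy of $\Delta-f_j'(\psi_j^*)$ in \eqref{localpatch:nondegen}, which together rule out cancellations between the explicit angular contribution and the correction coming from $\dot\psi_j$.
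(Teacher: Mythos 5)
Your reduction of the corollary to Lemma \ref{L:smooth-1} is exactly the paper's route: writing $\CF_j=R_j(X,M_{\vr}\beta,\vr)+\mu_j r_j^{-1}h_j(r_j\beta_j)$, extending the second term smoothly to $r_j=0$ using $h_j(0)=0$, reading off $\CD_X\CF(X,\beta,0)$ from the explicit $R_j(X,0,0)=-\mu_j^{-1}(ie^{i\theta})\cdot\nabla_{x_j}H_{\vec{\CC}}(X)$ (your bookkeeping $y_j=\mu_j^{-1}Jv$ and hence the $j$-th block of $\Lambda^{-1}J_N D^2H_{\vec{\CC}}(X)\wt X$ is correct, as is the inclusion in $Y_1$), and observing that the $\beta$-derivative at $\vr=0$ collapses to the block-diagonal operator with blocks $\mu_j\CD h_j(0)$. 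Up to this point the proposal is correct and essentially identical to the paper's argument.

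The genuine gap is the claim that $\CD h_j(0):X^s\to Y^{s-1}$ is an isomorphism. You correctly identify that this hinges on the mode-shift structure together with eigenvalues $\lambda^{(j)}_l$ that are nonvanishing, uniformly bounded below, and of the right order in $l$, but you do not prove any of this; you only assert that it ``will have to be extracted'' from the sign conditions in \eqref{localpatch:gammaassumptions} and the non-degeneracy \eqref{localpatch:nondegen}. That is not an argument: the non-degeneracy of $\Delta-f_j'(\psi_j^*)$ on $L^2(B_1)$ gives solvability of \eqref{E:semiLE-1} by the Implicit Function Theorem but provides no uniform control of the mode-$l$ radial problems for all $l$, and the sign hypotheses on $f_j$ do not by themselves exclude cancellation between the explicit angular contribution and the correction coming from $\dot\psi_j$. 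This missing piece is exactly Lemma \ref{C:Dh}, which the paper does not reprove but imports from Proposition 4.1 and Lemma 4.2 of \cite{SWZ13}; to close your proof you must either cite that result or actually carry out the radial ODE analysis mode by mode. There is also a minor bookkeeping slip in your sketch: at $\beta_j=0$ the forcing $\big(\CD|1+\p_z\wt\Gamma_j|^2\cdot\alpha_j\big)f_j(\psi_j^*)=2\,\mathrm{Re}(\p_z\dot{\wt\Gamma}_j)\,f_j(\psi_j^*)$ has angular mode $l-1$, so $\dot\psi_j$ lives in mode $l-1$ rather than in ``the same angular mode $l$''; this is consistent with the output shift $l\mapsto l-1$ recorded in Lemma \ref{C:Dh}, but not with your statement.
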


Unlike in Section \ref{S:VP}, the eigenvalues of $\CD h_j(0)$ are much harder to be computed explicitly due to the nonlinear $f_j$.   
The following lemma follows from a very slight modification of Proposition 4.1 and Lemma 4.2 in \cite{SWZ13} and we skip its proof. (Even though the proof there was for the case $\mu_j>0$, the case of $\mu_j<0$ follows from the oddness of $f_j$.)

\begin{lemma} \label{C:Dh}
There exists $\delta>0$ such that 
\[
\big(\CD h_j(0) \big) \sum_{m=3}^\infty (a_m \cos m\theta + b_m \sin m \theta) = \sum_{m=2}^\infty m \lambda_{j, m} (b_{m+1} \cos m\theta - a_{m+1} \sin m \theta)
\]
where $\lambda_{j, m} \in \BFR$ and $|\lambda_{j, m}|\geq \delta$. 
\end{lemma}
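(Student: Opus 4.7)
The plan is to linearize the defining semilinear problem \eqref{E:semiLE-1} at $\beta_j=0$ (where $\wt\Gamma_j=0$ and $\wt\psi_j=\psi_j^*$) and exploit the radial symmetry of $\psi_j^*$ to diagonalize $\CD h_j(0)$ by Fourier modes. Take the input $\beta_j=a_{m+1}\cos(m+1)\theta+b_{m+1}\sin(m+1)\theta$ with $m\ge 2$; the convention \eqref{E:CM-2-2} gives $\wt\Gamma_j(z)=(a_{m+1}-ib_{m+1})z^{m+1}$ and hence $|1+\p_z\wt\Gamma_j|^2 = 1+2(m+1)\,\text{Re}\big((a_{m+1}-ib_{m+1})z^m\big)+O(|\beta_j|^2)$. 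Linearizing \eqref{E:semiLE-1} and separating variables forces the first-order perturbation of $\wt\psi_j$ to take the form $\dot{\wt\psi}=u_m(r)\big[a_{m+1}\cos m\theta+b_{m+1}\sin m\theta\big]$, with $u_m$ solving the regular Sturm--Liouville problem
\[
u_m''+r^{-1}u_m'-m^2 r^{-2}u_m-f_j'(\psi_j^*(r))u_m=2(m+1)r^m f_j(\psi_j^*(r)),\qquad u_m(0)=u_m(1)=0.
\]
The non-degeneracy \eqref{localpatch:nondegen} makes $\Delta-f_j'(\psi_j^*)$ invertible on $L^2(B_1)$, and since it commutes with rotations it is invertible on each angular Fourier mode, so the ODE has a unique smooth solution $u_m$ for every $m\ge 2$.

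Next, I compute $\CD h_j(0)$. Using Lemma \ref{L:smooth-1} together with the change of variables $y=\Gamma_j(w)$ and the definition \eqref{E:Vor-j-1} of $\omega_j$, the quantity $h_j(\beta_j)(\theta)$ equals $\tfrac{1}{2\pi}\p_\theta$ of
\[
\frac{\mu_j\int_{B_1}|1+\p_z\wt\Gamma_j(w)|^2 f_j(\wt\psi_j(w))\log|\Gamma_j(e^{i\theta})-\Gamma_j(w)|\,d\mu_w}{\int_{B_1}|1+\p_z\wt\Gamma_j|^2 f_j(\wt\psi_j)\,d\mu}.
\]
At $\beta_j=0$ the integrand is $\theta$-independent (so $h_j(0)=0$), and the first-order variation of the scalar normalization vanishes by angular orthogonality against $e^{\pm im\theta}$. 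The remaining contributions come from $\delta|1+\p_z\wt\Gamma_j|^2$, from $\delta f_j(\wt\psi_j)=f_j'(\psi_j^*)\dot{\wt\psi}$, and from $\delta\log|\Gamma_j(e^{i\theta})-\Gamma_j(w)|$. Expanding $\log|e^{i\theta}-w|=-\sum_{k\ge 1}k^{-1}|w|^k\cos\big(k(\theta-\arg w)\big)$ together with the geometric-sum identity $(e^{i(m+1)\theta}-w^{m+1})/(e^{i\theta}-w)=\sum_{k=0}^{m}e^{i(m-k)\theta}w^k$ isolates the single angular mode $m$ on the output, and applying $\p_\theta$ converts $a_{m+1}\cos m\theta+b_{m+1}\sin m\theta$ into $m(b_{m+1}\cos m\theta-a_{m+1}\sin m\theta)$, producing the advertised shift $m+1\mapsto m$ and the $\text{Re}\leftrightarrow\text{Im}$ interchange. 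Assembling the three contributions yields the explicit formula
\[
\lambda_{j,m}=\frac{\mu_j}{2\pi}\left[1-\frac{m+1}{m}\cdot\frac{\int_0^1 \rho^{2m+1}f_j(\psi_j^*(\rho))\,d\rho}{\int_0^1\rho f_j(\psi_j^*(\rho))\,d\rho}-\frac{1}{2m}\cdot\frac{\int_0^1 \rho^{m+1}f_j'(\psi_j^*(\rho))u_m(\rho)\,d\rho}{\int_0^1\rho f_j(\psi_j^*(\rho))\,d\rho}\right].
\]

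The main obstacle will be the uniform lower bound $|\lambda_{j,m}|\ge\delta$. As $m\to\infty$, the substitution $u_m=r^m v_m$ removes the centrifugal singularity and produces an ODE for $v_m$ whose leading balance $(2m+1)v_m'\approx 2(m+1)rf_j(\psi_j^*)$ forces $\{v_m\}$ to be uniformly bounded on $[0,1]$; dominated convergence (with $\rho^{2m+1}\to 0$ and $\rho^{2m+1}v_m(\rho)\to 0$ pointwise on $[0,1)$, against the strictly positive weight $\rho f_j(\psi_j^*)$, which does not vanish identically since $\psi_j^*<0$ and $f_j>0$ on $\mathbf{R}^-$) then gives $\lambda_{j,m}\to\mu_j/(2\pi)\ne 0$, providing the bound for all large $m$. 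For each finite $m\ge 2$, non-vanishing of $\lambda_{j,m}$ follows by a contradiction argument paralleling \cite{SWZ13}: if $\lambda_{j,m}=0$, the corresponding first-order perturbation would yield a linearized vortex patch whose associated stream function has vanishing $\p_\theta$ on $\p B_1$, and after matching with the harmonic extension on $B_1^c$ via the jump relation for the single-layer potential, one extracts a nontrivial element of $\ker(\Delta-f_j'(\psi_j^*))$ on $B_1$, contradicting \eqref{localpatch:nondegen}. Combining the finite-$m$ non-vanishing with the large-$m$ limit yields the claimed uniform constant $\delta>0$.
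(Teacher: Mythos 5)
The paper does not actually prove this lemma: it defers entirely to Proposition 4.1 and Lemma 4.2 of \cite{SWZ13}, so your attempt to give a self-contained argument goes beyond the text. Your setup is sound: the linearization of \eqref{E:semiLE-1} at $\beta_j=0$, the mode-$m$ ODE for $u_m$, the three contributions to $\CD h_j(0)$, and the resulting explicit formula for $\lambda_{j,m}$ all check out (I verified the coefficients), and the large-$m$ limit $\lambda_{j,m}\to \mu_j/(2\pi)$ can be made rigorous along the lines you indicate (integrate $(r^{2m+1}v_m')'=r^{2m+1}(f_j'v_m+2(m+1)f_j)$ to get $\|v_m\|_\infty\le C$ for large $m$).

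The gap is in the finite-$m$ non-vanishing, which is the crux. If $\lambda_{j,m}=0$, the matching you describe does \emph{not} produce an element of $\ker(\Delta-f_j'(\psi_j^*))$ in the sense of \eqref{localpatch:nondegen}. Carrying it out: writing $W$ for the Newtonian potential of $\omega_j$ and using that $W-C_0\psi_j^*$ is harmonic in $B_1$, the vanishing of the mode-$m$ output forces $\dot W_m=C_0\dot\psi_m$ in $B_1$; since $f_j(\psi_j^*(1))=f_j(0)=0$ there is no surface density, $\dot W_m$ is $C^1$ across $S^1$ and equals $Ar^{-m}$ times the angular mode outside, so the matching yields the \emph{Robin} condition $\p_r\dot\psi_m(1)+m\,\dot\psi_m(1)=0$, while $\dot\psi_m(1)=-\nu_m\,\p_r\psi_j^*(1)\neq0$ (Hopf). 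This function is therefore not in the Dirichlet kernel, and the non-degeneracy hypothesis --- which is the Dirichlet realization used to solve \eqref{E:semiLE-1} --- gives no contradiction. A decisive sanity check: your argument never uses $m\ge 2$, yet for $m=1$ the Robin problem \emph{does} have the nontrivial solution $v=\p_r\psi_j^*$ (one checks $(\psi_j^*)''(1)+(\psi_j^*)'(1)=0$ from the equation), reflecting translation invariance; so any proof that does not distinguish $m=1$ from $m\ge2$ must be wrong. The standard repair is an ODE comparison: $w=\p_r\psi_j^*>0$ on $(0,1]$ solves the mode-$1$ equation, Sturm comparison with the larger centrifugal potential $m^2/r^2$ shows a mode-$m$ Robin kernel element $v$ cannot vanish on $(0,1]$, and then the Wronskian identity $\big[r(w'v-wv')\big]_0^1=(1-m^2)\int_0^1 r^{-1}wv\,dr$ together with $w'(1)=-w(1)$ gives a sign contradiction precisely when $m\ge2$. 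Without this (or an equivalent) step your proof of $|\lambda_{j,m}|\ge\delta$ is incomplete.
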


Therefore we have 

\begin{corollary} \label{C:CF-zero}
It holds that $\CF(X_*, 0, 0)=0$ and $\big(\CD_{(X, \beta)} \CF(X_*, 0, 0)\big)^{-1} \in L\big( (\dot H^{s-1})^N, \BFR^{2N} \times (X^s)^N \big)$ if and only if $X_*= (x_1^*, \ldots, x_N^*)$ is a non-degenerate critical point of $H_{\vec{\CC}}(X)$.  
\end{corollary}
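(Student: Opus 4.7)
The statement is the direct analog of Corollary \ref{C:CF-zero-1} in the vortex patch setting, and the plan is to reduce it entirely to the explicit formulas for $\CF(X_*, 0, 0)$ and its linearization that are already available from Lemma \ref{L:smooth-1}, Corollary \ref{C:CF}, and Lemma \ref{C:Dh}.

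First I would unpack $\CF_j(X, \beta, \vr) = r_j^{-1} \phi_j(X, M_{\vr}\beta, \vr)$ using the decomposition $\phi_j = r_j R_j + \mu_j h_j(\beta_j)$ from Lemma \ref{L:smooth-1}, which yields
\[
\CF_j(X, \beta, \vr) = R_j(X, M_{\vr}\beta, \vr) + \mu_j r_j^{-1} h_j(r_j \beta_j).
\]
Because $h_j(0) = 0$ and $h_j$ is smooth on $B_{X^s, R_s}$, the second term extends smoothly down to $\vr = 0$ with limit $\mu_j \CD h_j(0) \beta_j$, and vanishes identically when $\beta = 0$. Evaluating the first term at $(X_*, 0, 0)$ via the explicit formula in Lemma \ref{L:smooth-1} (noting that $\wt \Gamma_j \equiv 0$ and $a_1^j = 1$ when $\beta_j = 0$) gives $R_j(X_*, 0, 0) = -\mu_j^{-1}\, i e^{i\theta} \cdot \nabla_{x_j} H_{\vec{\CC}}(X_*)$. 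As $\{-\sin\theta, \cos\theta\}$ are linearly independent, this vanishes for every $j$ if and only if $\nabla H_{\vec{\CC}}(X_*) = 0$, i.e., $X_*$ is a critical point.

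For the linearization, Corollary \ref{C:CF} has already identified $\CD_X \CF(X_*, 0, 0)$ and $\CD_\beta \CF(X_*, 0, 0)$ and shown that they take values in the complementary subspaces $Y_1$ and $(Y^{s-1})^N$ respectively of the $L^2$-orthogonal decomposition $(\dot H^{s-1}(S^1))^N = Y_1 \oplus (Y^{s-1})^N$. Hence the total derivative is block-diagonal, and its invertibility reduces to invertibility of each block separately.

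The $\beta$-block is the diagonal operator $(\mu_1 \CD h_1(0), \ldots, \mu_N \CD h_N(0))$, which by Lemma \ref{C:Dh} is an isomorphism from $(X^s)^N$ onto $(Y^{s-1})^N$ regardless of $X_*$: the pointwise bound $|\lambda_{j,m}| \geq \delta$ on the Fourier multiplier gives a bounded inverse, while the upper bound comes from boundedness of $h_j: X^s \to \dot H^{s-1}$. The $X$-block, identified with a linear map $\BFR^{2N} \to Y_1 \cong \BFR^{2N}$ via the basis $\{\cos\theta, \sin\theta\}$, is by \eqref{E:DCF-2} precisely the matrix $\Lambda^{-1} J_N D^2 H_{\vec{\CC}}(X_*)$. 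Since $\Lambda^{-1} J_N$ is invertible, this block is invertible iff $D^2 H_{\vec{\CC}}(X_*)$ is non-degenerate. Combining the two equivalences yields the corollary in both directions. There is no real analytic obstacle here since all the heavy lifting is in Lemma \ref{L:smooth-1} and especially in the spectral gap of Lemma \ref{C:Dh}, whose proof (taken from \cite{SWZ13}) is the one nontrivial input the Lipschitz setting genuinely requires beyond the patch case.
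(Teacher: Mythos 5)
Your proposal is correct and follows essentially the same route as the paper, which states this corollary as an immediate consequence of the block structure and formulas in Corollary \ref{C:CF} together with the invertibility of $\CD h_j(0)$ from Lemma \ref{C:Dh}. Your write-up simply makes explicit the steps the paper leaves implicit (evaluating $R_j(X_*,0,0)$ via Lemma \ref{L:smooth-1} to characterize $\CF(X_*,0,0)=0$, and the block-diagonal reduction of the derivative), with no difference in substance.
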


As in the last step of Section \ref{S:VP}, following from the Implicit Function Theorem, for $|\vr|<<1$, there exists $\big(X(\vr), \beta(\vr)\big) \in \Sigma_\rho \times B_{(X^s)^N, R_s}$ smooth in $\vr$, such that 
\[
\CF \big(X(\vr), \beta(\vr), \vr\big) =0, \qquad X(0) = X_*, \; \; \beta(0)=0.
\]
It yields the desired steady concentrated vorticity $\omega$ whose vortical domain boundaries $\p \Omega_j$ are $O(|\vr|r_j^2)$ perturbations to a small circle of radius $r_j$. Such $\omega$ is continuous on $\bar \Omega$ and piecewise smooth on $\bar \Omega_1, \ldots, \bar \Omega_N, \bar \Omega \backslash \cup_{j=1}^N \Omega_j$ due to the construction. Remark \ref{R:shape} also applies here.

\bibliography{VP}{}
\bibliographystyle{plain}

\end{document}